\newcommand{\FF}{{\mathbb{F}}}
\newcommand{\Q}{{\mathbb{Q}}}
\newcommand{\bG}{{\mathbf{G}}}
\newcommand{\Irr}{{\operatorname{Irr}}}
\newcommand{\Out}{{\operatorname{Out}}}
\newcommand{\out}{{\operatorname{Out}}}
\newcommand{\SL}{{\operatorname{SL}}}
\newcommand{\PSL}{{\operatorname{PSL}}}
\newcommand{\POmega}{{\operatorname{P\Omega}}}
\newcommand{\alt}{\mathfrak{A}}
\newcommand{\sym}{\mathfrak{S}}
\newcommand{\tw}[1]{{}^#1\!}
\def\syl#1#2{{\rm Syl}_#1(#2)}
\def\nor{\triangleleft\,}
\def\oh#1#2{{\bf O}_{#1}(#2)}
\def\zent#1{{\bf Z}(#1)}
\def\irr#1{{\rm Irr}(#1)}
\def\norm#1#2{{\bf N}_{#1}(#2)}
\def\cent#1#2{{\bf C}_{#1}(#2)}
\def\ker#1{{\rm Ker}(#1)}
\def\irrp#1#2{{\rm Irr}_{#1'}(#2)}
\def\sbs{\subseteq}
\newtheorem{thm}{Theorem}[section]
\newtheorem{lem}[thm]{Lemma}
\newtheorem{prop}[thm]{Proposition}
\newtheorem{question}[thm]{Question}
\newtheorem*{thm*}{Theorem $\heartsuit$}
\newtheorem*{conjA}{Conjecture A}
\newtheorem*{conjB}{Conjecture B}
\newtheorem*{conjC}{Conjecture C}
\newtheorem*{thmD}{Theorem D}
\newtheorem*{thmE}{Theorem E}
\theoremstyle{remark}
\newtheorem{rem}[thm]{Remark}
\newcommand\wt[1]{\widetilde{#1}}
\newcolumntype{?}{!{\vrule width 1pt}}
\newcommand{\GL}{\operatorname{GL}}
\newcommand{\PSp}{\operatorname{PSp}}
\newcommand{\aut}{\operatorname{Aut}}
\newcommand\type[1]{\operatorname{#1}}
\newcommand{\gal}{\mathcal{G}}
\begin{document}

\title[Principal Blocks for different Primes, I]{Principal Blocks for different Primes, I}

\author{Gabriel Navarro}
\address[G. Navarro]{Departament de Matem\`atiques, Universitat de Val\`encia, 46100 Burjassot,
       Val\`encia, Spain.}
\email{gabriel@uv.es}

\author{Noelia Rizo}
\address[N. Rizo]{Departamento de Matem\'aticas, Universidad de Oviedo, 33007, Oviedo, Spain}
\email{rizonoelia@uniovi.es}

\author{A. A. Schaeffer Fry}
\address[A. A. Schaeffer Fry]{Deptartment of  Mathematics and Statistics, Metropolitan State University of Denver, Denver, CO 80217, USA}
\email{aschaef6@msudenver.edu}

\thanks{The first and second authors are partially supported by Grant PID2019-103854GB-I00 funded by MCIN/AEI/10.13039/501100011033. The second author also acknowledges support by Generalitat Valenciana AICO/2020/298 and Grant PID2020-118193GA-I00 funded by MCIN/AEI/10.13039/501100011033.  The third author is partially supported by a grant from the National Science Foundation, Award No. DMS-2100912.}

\keywords{}

\subjclass[2010]{20C20, 20C15}

\begin{abstract}  
We propose new conjectures about the relationship between the principal blocks of finite groups
for different primes {and establish evidence for these conjectures}. \end{abstract}

\maketitle

\centerline{\sl To Pham Huu Tiep, on his 60th birthday}
\bigskip

\section{Introduction}
If $p$ and $q$ are different primes and $G$ is a finite group,
it is not generally reasonable to expect meaningful interactions between
the $p$-representation theory of $G$ and its $q$-representation theory, outside of the case of solvable groups.
But, there are some  exceptions.

\medskip

In 1997 (\cite{NW}), W. Willems and the first author asked when the irreducible
complex characters in a Brauer $p$-block $B_p$ coincide
with those in a $q$-block $B_q$. This was later characterized
 for  principal blocks $B_p(G)$ in \cite{BNOT}, using the Classification of Finite Simple Groups (CFSG): we have that 
 $\irr{B_p(G)}=\irr{B_q(G)}$  if and only if
 $p$ and $q$ do not divide $|G|$. In other words,
the subset $\irr{B_p(G)}$ of the irreducible complex characters of $G$ in the
principal $p$-block of $G$
determines the prime $p$, whenever $p$ divides $|G|$.

\medskip

In 2008, C. Bessenrodt and J. Zhang \cite{BZ} studied the opposite case, the {\sl trivial intersection} 
case, and proved, again using the CFSG,
 that $G$ is nilpotent if and only if the trivial character of $G$, $1_G$, is the only irreducible complex character lying in $\irr{B_p(G)} \cap \irr{B_q(G)}$
for all primes $p \ne q$ dividing the order of $G$. This result  led the authors in \cite{LWXZ} to recently conjecture that
if $\irr{B_p(G)} \cap \irr{B_q(G)}=\{ 1_G\}$, then there is a Sylow $p$-subgroup
$P$ of $G$ and a Sylow $q$-subgroup $Q$ of $G$ whose elements commute.
In \cite{LWXZ}, this conjecture is reduced to almost-simple groups, and it is proved if one of the 
primes is 2. The general case of this interesting and deep problem seems out of reach.

\medskip 
There is a third recently-observed interaction between principal blocks and different primes,  which interestingly has the same
conclusion as the previous one. The
so called {\sl Brauer's height zero conjecture for two primes} was proposed in \cite{MN20}:
if $G$ is a finite group, then the elements of a Sylow $p$-subgroup of $G$ commute with 
the elements of some Sylow $q$-subgroup of $G$
if and only if $p$ does not divide the degrees of the characters in $B_q(G)$ and $q$ does not divide
the degrees of the characters in $B_p(G)$. This has been recently proved in \cite{LWWZ}.

\medskip

In this paper, we propose a strengthened version of these conjectures, which we can prove in certain cases.   (We also fix a gap in the  reduction in \cite{LWXZ} which, after informing the authors,   was corrected in \cite{LWXZ2}.)
There is a new idea behind our conjectures:   it should be possible to replace $\irr{B_p(G)}$ by
the much smaller subset $\irrp{p}{B_p(G)}$ of the characters in the 
principal $p$-block of $G$ of degree not divisible by $p$, while still obtaining the corresponding conclusions.
\medskip

Accordingly, we conjecture the following.
\medskip
\begin{conjA}
Let $G$ be a finite group and let $p$ and $q$ be different primes.
If $$\irrp{p}{B_p(G)} \cap  \irrp{q}{B_q(G)}=\{1_G\},$$
then there are a Sylow $p$-subgroup $P$ of $G$
and a Sylow $q$-subgroup $Q$ of $G$ such that $xy=yx$ for all
$x \in P$ and $y \in Q$.
\end{conjA}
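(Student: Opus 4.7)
The plan is to follow the overall architecture established for the (weaker-hypothesis) Bessenrodt--Zhang conjecture in \cite{LWXZ}: first reduce to almost simple groups, then verify the result for each family of finite simple groups using the Classification. Note that $\irrp{p}{B_p(G)}\subseteq\irr{B_p(G)}$, so the intersection hypothesis of Conjecture A is \emph{weaker} than that of the LWXZ conjecture while the conclusion is the same; thus Conjecture A is strictly stronger and more work is required at every stage.

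For the reduction, I would take a minimal counterexample $G$ with both $p$ and $q$ dividing $|G|$, and let $N$ be a minimal normal subgroup of $G$. Since the principal block of $G/N$ inflates into the principal block of $G$ with height-zero behavior controlled by the $p$-part of $|N|$, the hypothesis $\irrp{p}{B_p(G)}\cap\irrp{q}{B_q(G)}=\{1_G\}$ passes to the quotient $G/N$, and minimality provides commuting Sylows there. Lifting back to $G$ splits into cases: if $N$ is abelian (necessarily a $p$- or $q$-group, or of order coprime to $pq$), standard Glauberman/Hall-subgroup arguments should suffice; if $N=S_1\times\cdots\times S_k$ with $S_i$ nonabelian simple, one reduces to $k=1$ using the permutation action of $G$ on the factors and Fong--Reynolds for blocks, eventually placing $G$ between $S$ and $\aut(S)$. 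The known gap in \cite{LWXZ} (and its correction in \cite{LWXZ2}) must be revisited at this step, since the smaller intersection set complicates the descent.

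In the almost simple case, I would argue the contrapositive: assuming no Sylow $p$-subgroup of $G$ centralizes any Sylow $q$-subgroup of $G$ elementwise, construct a nontrivial $\chi\in\irrp{p}{B_p(G)}\cap\irrp{q}{B_q(G)}$. For sporadic and alternating groups this is a direct inspection using known character tables. For simple groups of Lie type, I would use the Brou\'e--Malle--Michel / Cabanes--Enguehard parameterization of characters in the principal $p$-block via $e_p$-cuspidal pairs, combined with Lusztig's degree formulas, to produce either a unipotent or semisimple character lying simultaneously in $B_p(S)$ and $B_q(S)$ with both $p'$- and $q'$-degree. The recently established height-zero conjecture for two primes \cite{LWWZ} provides the translation between character-degree divisibility and commuting Sylows, and should be invoked as a black box to leverage the degrees of the $\chi$ produced. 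Extension from $S$ to $G$ uses Clifford theory together with Gallagher's theorem, keeping track of $\out(S)$-stability of the candidate character.

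The main obstacle is the almost simple case for groups of Lie type in cross-characteristic, when neither $p$ nor $q$ is the defining prime and the cyclotomic orders $e_p$, $e_q$ interact nontrivially with the Weyl group, so that Sylow structure is intricate. Here the $p'$-degree condition in the principal block pins $\chi$ to a very thin set of parameters (essentially trivial $e_p$-cuspidal pairs), and the requirement that the same $\chi$ also be $q'$-of-degree in $B_q(G)$ can fail for the obvious candidates; handling diagonal, field, and graph automorphisms when lifting from $S$ to $G$ compounds the difficulty. Because of this, I expect Conjecture A to be established in the paper only in certain cases --- for instance when one prime is $2$, or for $p$-solvable groups, or for specified families of almost simple groups --- rather than in full generality.
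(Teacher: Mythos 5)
Your overall architecture (a reduction to almost simple groups followed by a CFSG-based analysis, with the expectation that only partial cases --- notably one prime equal to $2$ --- can actually be settled) does match the paper's treatment, which proves exactly Theorem D rather than the full conjecture. However, at the two points where the real work lies, your sketch has genuine gaps. In the reduction, "the hypothesis passes to $G/N$ and standard Glauberman/Hall-subgroup arguments suffice" glosses over precisely the step where the gap in \cite{LWXZ} occurs: the uniqueness of the minimal normal subgroup cannot be obtained by embedding $G$ into $(G/N)\times(G/M)$, and the paper repairs it with the Revin--Vdovin criterion (Lemma \ref{vdovin}), which itself rests on the classification of Hall $\pi$-subgroups of simple groups. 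Likewise, the descent from $G$ to a normal subgroup in the paper's Step 4 is not Fong--Reynolds or generic Clifford theory but the Third Main Theorem combined with Alperin--Dade's theory of isomorphic blocks (Theorem \ref{isomblocks}), and the control of $A/S$ comes from Gross and Glauberman (Theorem \ref{A:Sp'}); none of these mechanisms appears in your plan, and the reduction lands not merely on $S\le A\le\aut(S)$ but on the stronger hypotheses of Question \ref{questionalmostsimple} ($pq\mid|S|$ and $A=S\cent{A}{P}=S\cent{A}{Q}$), which are needed later.

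In the almost simple case, invoking \cite{LWWZ} as a black box cannot do what you want: knowing that some degree in $B_p$ is divisible by $q$ (or vice versa) when the Sylow subgroups do not commute gives no way to manufacture a single nontrivial character lying in $\irrp{p}{B_p(A)}\cap\irrp{q}{B_q(A)}$. The paper instead constructs such characters explicitly --- semisimple characters in defining characteristic, and rational-valued unipotent characters pinned down by Fong--Srinivasan $e$-core/cocore combinatorics with explicit partitions and symbols in cross characteristic --- and the $p'$-degree characters of the principal block are far from being confined to "essentially trivial $e_p$-cuspidal pairs." Most importantly, your Clifford--Gallagher step from $S$ to $A$ misses the central obstruction the paper isolates: a character lying in both principal blocks of $S$ may have \emph{two different} canonical extensions, one in $B_p(A)$ and the other in $B_q(A)$ (see Remark \ref{rem:oddp1complication}, with $S=\alt_8$, $A=\sym_8$, $\pi=\{3,5\}$ and the degree-$14$ character). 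Stability and extendibility alone do not identify an extension in both blocks; the paper resolves this only for $p=2$, using rationality of the constructed unipotent character, the uniqueness of the rational extension in the principal block along odd-order abelian layers (Lemma \ref{lem:rationalext}), Glauberman's description of $\cent{\aut(S)}{P}$ for $P\in\syl2S$, and the fact that the remaining quotient is a $2$-group. Without an argument of this kind your extension step fails exactly where the difficulty lies, which is also why the case of two odd primes remains open.
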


\begin{conjB}
Let $G$ be a finite group and let $p$ and $q$ be primes dividing the order of $G$.
If $\irrp{p}{B_p(G)}=\irrp{q}{B_q(G)}$, then $p=q$.
\end{conjB}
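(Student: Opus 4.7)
The plan is to adapt the strategy of \cite{BNOT} --- which handles the stronger equality $\irr{B_p(G)} = \irr{B_q(G)}$ --- to the height-zero setting. Assume for contradiction that $p \ne q$ both divide $|G|$ and $\irrp{p}{B_p(G)} = \irrp{q}{B_q(G)} =: S$. Then every $\chi \in S$ has degree coprime to $pq$, and $1_G \in S$.

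First I would carry out standard structural reductions. Since the characters in $B_p(G)$ are exactly the inflations to $G$ of the characters in $B_p(G/\oh{p'}{G})$ (by a standard reduction for normal $p'$-subgroups), and symmetrically for $q$, we can reduce to the case $\oh{p'q'}{G} = 1$, so that $F(G) = \oh{p}{G} \times \oh{q}{G}$ and every minimal normal subgroup of $G$ is a $p$-group, a $q$-group, or a direct product of non-abelian simple groups. Moreover, a linear character $\lambda$ of $G$ lies in $B_p(G)$ iff $\lambda$ is trivial on $\Oh{p}{G}$; matching the linear constituents on the two sides then forces $p, q \nmid |G/G'|$, eliminating the easiest source of counterexamples.

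Next I would attempt a reduction to the case $G$ almost simple, by induction on $|G|$ and Clifford theory over a minimal normal subgroup. The technical difficulty is that the $p'$-degree condition is not preserved cleanly under restriction or induction, so the argument has to leverage the McKay conjecture for principal blocks: the equality $|\irrp{p}{B_p(G)}| = |\irrp{p}{B_p(\norm{G}{P})}|$ for $P$ a Sylow $p$-subgroup allows one to transfer questions about $G$ to questions about $\norm{G}{P}$, a $p$-local subgroup in which the principal block is much easier to analyze.

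The final step is the simple-group case via the CFSG: for each non-abelian simple group $G_0$ with $pq \mid |G_0|$, exhibit explicitly a character in $\irrp{p}{B_p(G_0)} \setminus \irrp{q}{B_q(G_0)}$ or the reverse. In groups of Lie type, Lusztig's classification provides candidates (unipotent characters, Steinberg-like constructions, height-zero Deligne--Lusztig characters controlled by generic degree formulas); alternating groups can be analyzed via $p$-core partition theory; sporadic groups by inspection of ATLAS data. The main obstacle is the reduction to almost simple: the $p'$-degree condition does not interact well with the usual Clifford machinery, and bridging this gap --- not the simple-group case itself --- is where new ideas are needed.
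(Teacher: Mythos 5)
This statement is Conjecture B of the paper, and it is not proved there in general: the paper establishes it only for groups that are both $p$-solvable and $q$-solvable (Theorem E, proved in Section \ref{sec:pqsolvable}) and explicitly defers further evidence to a sequel. Your text likewise does not constitute a proof but a programme: the two decisive steps --- the reduction to almost simple groups and the CFSG case-by-case verification --- are only described, and you yourself concede that the reduction ``is where new ideas are needed.'' That difficulty is real and is precisely the one highlighted in Remark \ref{rem:oddp1complication}: a character of a normal subgroup can have two different canonical extensions, one lying in the principal $p$-block and the other in the principal $q$-block, so membership of $p'$-degree characters in principal blocks does not pass through Clifford theory in the naive way; and appealing to a McKay-type equality $|\irrp{p}{B_p(G)}|=|\irrp{p}{B_p(\norm{G}{P})}|$ only transfers cardinalities to a $p$-local subgroup, which gives no handle on comparing the two sets $\irrp{p}{B_p(G)}$ and $\irrp{q}{B_q(G)}$ simultaneously. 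In addition, one of your preparatory claims is false as stated: a linear character in $B_p(G)$ need not be trivial on $\Oh{p}{G}$; only the converse holds (characters of the $p$-group $G/\Oh{p}{G}$ inflate into $B_p(G)$). For $G=\sym_3$ and $p=3$, all of $\irr{G}$ lies in the principal $3$-block while $\Oh{3}{G}=G$, so the sign character is a counterexample, and your intermediate conclusion $p,q\nmid |G/G'|$ does not follow by that argument.

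For comparison, the paper's partial proof runs along quite different lines and uses solvability in an essential way: it relies on the equality $\irr{B_p(G)}=\irr{G/\oh{p'}{G}}$, which holds for $p$-solvable groups but fails in general, together with Berkovich's theorem (Lemma \ref{ber}), the characterization of the condition $\irrp{p}{G}=\irrp{q}{G}$ from \cite{NT} and \cite{NWo}, the Hall--Higman Lemma 1.2.3, and \cite[Corollary B]{MN11}. None of these tools is secured along the route you sketch, and the general case of Conjecture B remains open; what you have is a plausible plan modelled on \cite{BNOT}, not a proof.
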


\begin{conjC}
Let $G$ be a finite group, and let $p$ and $q$ be different primes.
Then $q$ does not divide $\chi(1)$ for all $\chi \in \irrp{p}{B_p(G)}$
and $p$ does not divide $\chi(1)$ for all $\chi \in \irrp{q}{B_q(G)}$
if and only if there are a Sylow $p$-subgroup $P$ of $G$
and a Sylow $q$-subgroup $Q$ of $G$ such that $xy=yx$ for all
$x \in P$ and $y \in Q$.
\end{conjC}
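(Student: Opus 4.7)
The plan is to treat the two directions of the biconditional separately. The $(\Leftarrow)$ direction should be immediate from the main theorem of \cite{LWWZ}: if $P$ and $Q$ are an elementwise-commuting pair of Sylow $p$- and $q$-subgroups of $G$, that theorem yields $q \nmid \chi(1)$ for every $\chi \in \irr{B_p(G)}$ and $p \nmid \chi(1)$ for every $\chi \in \irr{B_q(G)}$, so restricting to the subsets $\irrp{p}{B_p(G)} \subseteq \irr{B_p(G)}$ and $\irrp{q}{B_q(G)} \subseteq \irr{B_q(G)}$ yields the conclusion we want. The substance of the conjecture therefore lies entirely in the forward direction, which is a strict strengthening of \cite{LWWZ}.

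For $(\Rightarrow)$, I would first try to bootstrap to \cite{LWWZ}: show that the degree condition on the $p'$-degree characters of the principal block already forces the same degree condition on all of $\irr{B_p(G)}$. In the abelian-defect case this is automatic, since Brauer's height zero theorem gives $\irrp{p}{B_p(G)} = \irr{B_p(G)}$; but for general defect, controlling characters of positive height in the principal block seems out of reach with current tools, and I expect this strategy to fail in general. The fallback is a direct reduction to almost simple groups via a minimal counterexample $G$. Picking a minimal normal subgroup $N$: if $N$ is a $p$-group, then $N \leq \oh{p}{G}$ and the hypothesis descends to $G/N$ by inflation, which preserves both the principal block and character degrees; the symmetric argument with $q$ handles the case $N$ a $q$-group. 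If $N$ is a $\{p,q\}'$-group, Fong--Reynolds together with character triple isomorphisms should allow a transfer to the stabilizer of an appropriate block of $N$. These considerations, combined with the standard Clifford-theoretic machinery, should force $N$ to be a direct product of isomorphic nonabelian simple groups and $G$ to be almost simple. The reduction is delicate, as witnessed by the gap in \cite{LWXZ} acknowledged in the introduction, so care is needed at this stage.

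The main obstacle will be the almost simple case. The sporadic groups and small-rank exceptions can be dispatched with the GAP character table library, and the alternating groups can be handled via hook-length combinatorics describing $p'$-degree characters in the principal block. The heart of the proof is the simple groups of Lie type: for each pair $(p,q)$ and each almost simple $G$ of Lie type whose Sylow $p$- and Sylow $q$-subgroups do not commute, one must exhibit an explicit character $\chi \in \irrp{p}{B_p(G)}$ with $q \mid \chi(1)$, or the symmetric assertion. This will require a systematic use of Deligne--Lusztig theory, the Cabanes--Enguehard/Brou\'e--Michel description of $\ell$-block distributions, and Lusztig's degree formulas for unipotent and semisimple characters; the most resistant subcases should be mixed non-defining characteristic situations where $p$ and $q$ have different orders modulo the relevant cyclotomic polynomials. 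An additional technical hurdle is extending the witnessing character from the simple group to the almost simple overgroup while simultaneously preserving $p'$-degree, principal-block membership, and divisibility of the degree by $q$.
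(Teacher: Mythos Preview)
The central issue is that Conjecture C is precisely that --- a conjecture --- and the paper does not prove it in general. The authors explicitly defer a fuller treatment of Conjectures B and C to a sequel \cite{NRS2}, and in the present paper establish them only under the additional hypothesis that $G$ is both $p$-solvable and $q$-solvable (Theorem E, proved in Section \ref{sec:pqsolvable}). There is therefore no ``paper's own proof'' of the full statement to compare against.

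Your observation about the $(\Leftarrow)$ direction is correct and is the one part that is genuinely settled: it follows at once from \cite{LWWZ} (and already from \cite[Theorem 4.1]{MN20}, which the paper itself invokes in Step 3 of Theorem \ref{thmreduction}). For $(\Rightarrow)$, what you have written is a plausible research outline --- and you are honest that the bootstrap to \cite{LWWZ} ``seems out of reach'' and that the almost-simple case is the real obstacle --- but it is a plan, not a proof, and the paper makes no claim to the contrary. A few of your reduction steps are also sketchier than they look: for instance, when $N$ is a $p$-group, lifting commuting Sylows from $G/N$ back to $G$ is not automatic (compare Step 7 of Theorem \ref{thmreduction}, where the paper needs $N$ central, obtained via the hypothesis $G=N\cent{G}{N}$).

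In the $p$-$q$-solvable case the paper's actual argument looks nothing like your outline. It exploits the identification $\irr{B_p(G)}=\irr{G/\oh{p'}G}$ from Fong's theory, reduces to $\oh{p'}G\cap\oh{q'}G=1$, applies the main result of \cite{NWo} on $p'$-degree characters in $\pi$-separable groups to obtain a containment $\norm GP\subseteq \norm GQ\cdot\oh{p'}G$, and then uses the Hall--Higman Lemma 1.2.3 to force $Q\subseteq\oh{p'}G$ and $P\subseteq\oh{q'}G$, whence $[P,Q]\subseteq[\oh{q'}G,\oh{p'}G]=1$. No Deligne--Lusztig theory, almost-simple reduction, or case analysis enters.
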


At the end of the paper, we shall also discuss a further strengthening of our conjectures using Galois automorphisms. In the present paper, we mainly focus on Conjecture A, and prove the following.

\begin{thmD}
\begin{enumerate}[(a)]

\item
Conjecture A is true for all finite groups if it is true for almost--simple groups.
\item
Conjecture A is true if $p=2$.
\end{enumerate}
 \end{thmD}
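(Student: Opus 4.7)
The plan is to handle parts (a) and (b) in sequence, with (b) resting on (a) combined with a CFSG-based analysis of the almost-simple case in the special situation $p=2$.

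For (a) I would argue by minimal counterexample. Let $G$ be of minimal order satisfying $\irrp{p}{B_p(G)} \cap \irrp{q}{B_q(G)} = \{1_G\}$ while failing the commuting-Sylows conclusion. The pivotal observation is that, for any normal subgroup $N \nor G$, inflation injects $\irrp{p}{B_p(G/N)}$ into $\irrp{p}{B_p(G)}$ (and likewise on the $q$-side), so the hypothesis of Conjecture A descends to $G/N$; by minimality the conclusion then holds in every proper quotient. Choosing a minimal normal subgroup $N$ of $G$, I would split on its structure. If $N$ is an elementary abelian $r$-group, a direct argument lifts the commutation of Sylows from $G/N$ back to $G$, distinguishing the cases $r \notin \{p,q\}$ and $r \in \{p,q\}$. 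Otherwise $N$ is a direct product $S_1 \times \cdots \times S_k$ with $S_i \cong S$ non-abelian simple; applying the almost-simple hypothesis to $H := N_G(S_1)/C_G(S_1)$ (which is almost-simple with socle $S$) yields a non-trivial common character of $H$ in both principal blocks, and from it one builds, via Clifford theory, a non-trivial character of $G$ in $\irrp{p}{B_p(G)} \cap \irrp{q}{B_q(G)}$, contradicting the hypothesis on $G$.

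The main obstacle in (a) is this final Clifford-theoretic step. One must extend or suitably induce a $p'$- and $q'$-degree character from an almost-simple section up through $N$, and then through the overgroup $G/C_G(N)$ permuting the components, while preserving membership in both principal blocks \emph{and} both degree-coprimeness conditions simultaneously. Working with $\irrp{p}{\cdot}$ rather than $\irr{\cdot}$, as in \cite{LWXZ}, makes this extension problem strictly more delicate; careful bookkeeping on defects and Brauer correspondents on the $p$- and $q$-sides at once will be required, paralleling the repair to the reduction of \cite{LWXZ} alluded to in the introduction.

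For (b), by part (a) it suffices to verify Conjecture A for almost-simple groups $G$ when $p=2$; this is done family by family using the Classification. For alternating and sporadic groups the verification proceeds directly from the character tables, using standard descriptions of odd-degree characters in principal $2$-blocks. For $G$ of Lie type in odd defining characteristic, Deligne-Lusztig theory together with the Brauer height-zero results of \cite{MN20, LWWZ} should supply, whenever a Sylow $2$- and a Sylow $q$-subgroup do not commute, an explicit non-trivial semisimple character lying in $\irrp{2}{B_2(G)} \cap \irrp{q}{B_q(G)}$; when the defining characteristic is $2$, candidate common characters are drawn from unipotent characters of odd degree together with $q$-regular characters in the principal $q$-block. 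The hardest subcases are expected to be the twisted and low-rank groups of Lie type, where the $2$-local structure is least uniform and a finer case-by-case analysis will be required.
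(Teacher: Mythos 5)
Your proposal does not carry out the two steps on which the paper's proof actually turns, and in both places the missing idea is substantive. For part (a), the paper's reduction (Theorem~\ref{thmreduction}) does \emph{not} lift a character from an almost-simple section of $G$ via Clifford theory; it shows instead that a minimal counterexample is \emph{itself} almost simple, and moreover satisfies the extra conditions $G=S\cent GP=S\cent GQ$ that appear in Question~\ref{questionalmostsimple} and are needed in the almost-simple analysis. To get there one must push the hypothesis not only to quotients (your Step, which matches the paper's Step 0) but \emph{down to suitable normal subgroups}: this is the paper's Step 4, which uses the Third Main Theorem, the uniqueness of the block covering $B_p(N)$ when $\cent G{P_1}\subseteq N$, the Isaacs--Smith normal $p$-complement criterion (Lemma~\ref{th}), and the Alperin--Dade theory of isomorphic blocks. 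Your two alternative devices do not close this gap: (i) when $N$ is a minimal normal elementary abelian $p$- or $q$-group, commuting Sylow subgroups of $G/N$ do \emph{not} lift to $G$ by any direct argument unless one first proves $N$ is central, which is exactly what the paper's Steps 4--7 extract from the block-theoretic hypothesis on $G$ itself (together with the Revin--Vdovin Hall-subgroup criterion, Lemma~\ref{vdovin}, to obtain a unique minimal normal subgroup -- the point where \cite{LWXZ} had its gap); and (ii) your Clifford-theoretic ascent from $\norm G{S_1}/\cent G{S_1}$ runs into precisely the obstruction of Remark~\ref{rem:oddp1complication}: a character lying in both principal blocks of a section can have \emph{different} canonical extensions on the $p$- and $q$-sides, so membership in both blocks need not ascend. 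You flag this as "the main obstacle" but do not resolve it, and resolving it is the proof.

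For part (b), your mechanism in the main case is wrong. When the defining characteristic is odd and $\pi=\{2,q\}$, characters of $B_2(G)$ lie in Lusztig series $\mathcal{E}(G,t)$ with $t$ a $2$-element and characters of $B_q(G)$ in series with $t$ a $q$-element, so $\irr{B_2(G)}\cap\irr{B_q(G)}$ consists of \emph{unipotent} characters only; a nontrivial semisimple character can never lie in both principal blocks, so the semisimple-character route you propose there cannot work (semisimple characters are used in the paper only when one of the two primes \emph{is} the defining characteristic, as in Lemma~\ref{definingchar}). Likewise, the height-zero results of \cite{MN20,LWWZ} only give that some degree in one principal block is divisible by the other prime when the Sylow subgroups fail to commute; they do not produce a common character of $\{2,q\}'$-degree. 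The actual content of the paper here is the explicit combinatorial construction of unipotent characters with prescribed $e$-core/cocore and $\pi'$-degree (Proposition~\ref{prop:nondef} and Lemma~\ref{lem:except}), together with the passage from $S$ to the almost simple group $A$ -- using Gross/Glauberman on centralizers of Sylow subgroups in $\Aut(S)$, rationality of unipotent characters, and uniqueness of rational extensions inside principal blocks (Lemma~\ref{lem:rationalext}, Proposition~\ref{lietype}) -- an extension problem your sketch does not address at all.
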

\medskip
 {We prove part (a) of Theorem D in Section \ref{sectionreduction} below, and complete part (b) in Section \ref{sectionsimples}.} As the reader will see, the most difficult part in this paper is to prove Conjecture A for $p=2$ and almost simple groups.  {We remark that in the proof for simple groups, it is often clear that there exist nontrivial characters lying in both principal blocks, so the main step in these cases is to verify that such characters may be found to also have degree prime to $p$ and $q$, a step not needed in  \cite{LWXZ}.}
There is a rather surprising fact that makes Conjecture A, as well as the main conjecture of \cite{LWXZ}, difficult to prove.
Perhaps it is worth mentioning it now: if $N\nor G$, $G/N$ has order not divisible by $p$,
and $G=N\cent GP$, where $P \in \syl pG$, it is well known that $\theta \in \irr{B_p(N)}$ has a canonical extension
$\theta_p \in \irr{B_p(G)}$  (using the Alperin-Dade theory of isomorphic blocks).
If $q \ne p$ is also a prime number, however, under the same circumstances, it might very well occur that $\theta_p \ne \theta_q$. So $\theta$
can have in this case {\sl two} canonical extensions to $G$. (See Remark \ref{rem:oddp1complication}.)

\medskip

The following is the second main result of this paper.

 \begin{thmE}
If $G$ is $p$-solvable and $q$-solvable, then Conjectures B and C are true.
\end{thmE}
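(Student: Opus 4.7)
The approach to both conjectures rests on Fong's theorem on blocks of $p$-solvable groups, which for $p$-solvable $G$ identifies the principal $p$-block with inflation from $G/\oh{p'}G$:
\[
\irr{B_p(G)} = \irr{G/\oh{p'}G}, \qquad \irrp{p}{B_p(G)} = \irrp{p}{G/\oh{p'}G}.
\]
Writing $N := \oh{p'}G$ and $M := \oh{q'}G$, the hypotheses of both conjectures become statements about $p'$-degree characters of $G/N$ and $q'$-degree characters of $G/M$. Every character in either set has $N \cap M$ (the largest normal $\{p,q\}'$-subgroup of $G$) in its kernel, so a Schur--Zassenhaus argument lets us pass to $G/(N\cap M)$ and assume $N\cap M = 1$, hence $NM = N\times M$.

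For Conjecture C the ``if'' direction is immediate from the main theorem of \cite{LWWZ}, which already forces $q \nmid \chi(1)$ for \emph{every} $\chi \in \irr{B_p(G)}$ when a Sylow $p$- and $q$-subgroup commute elementwise. For the converse my plan is to upgrade the hypothesis from $\irrp{p}{B_p(G)}$ to all of $\irr{B_p(G)}$ (and symmetrically for $q$), after which \cite{LWWZ} applies directly. The heart of this upgrade is the following claim, proved by induction on $|G|$: for $p$-solvable $H$ with $\oh{p'}H = 1$, if $q \nmid \chi(1)$ for all $\chi \in \irrp{p}{H}$, then $q \nmid \chi(1)$ for all $\chi \in \irr{H}$. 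Here the Fong--Swan theorem lifts each irreducible $p$-Brauer character of $H$ to an ordinary $p'$-character of the same degree, and combined with Isaacs' theory of partial characters in $\pi$-separable groups this should control the whole character degree set of $H$ from its $p'$-part.

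For Conjecture B, suppose toward contradiction that $\irrp{p}{G/N} = \irrp{q}{G/M}$ with distinct primes $p\neq q$ both dividing $|G|$. Every character in the common set has $NM$ in its kernel and degree coprime to $pq$. Since every linear character of $G/N$ lies in the common set, each such character has $M$ in its kernel, forcing $M \le G'N$; symmetrically $N \le G'M$, and hence $G'N = G'M$. Passing to $\bar G := G/NM$, a $\{p,q\}$-solvable group in which the common set becomes precisely $\irrp{p}{\bar G} = \irrp{q}{\bar G}$, one exploits the existence of a Hall $\{p,q\}$-subgroup of $\bar G$: a Clifford-theoretic analysis of an induced character above a faithful character of $\oh{p}{\bar G}$ produces an irreducible character of $\bar G$ of $p'$-degree divisible by $q$, contradicting the common-set structure.

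The main technical obstacle is the propagation step at the heart of Conjecture C: upgrading a degree condition on $\irrp{p}{B_p(G)}$ to the same condition on the full $\irr{B_p(G)}$. Although intuitively natural in the $p$-solvable setting, this requires a careful combination of Fong--Swan, Isaacs' partial-character theory, and induction through the Fitting series, together with the ``two canonical extensions'' subtlety flagged in the introduction, to prevent a multiple of $q$ from appearing unexpectedly among the $p$-divisible character degrees.
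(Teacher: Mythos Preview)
Your overall setup is right: Fong's theorem gives $\irr{B_p(G)}=\irr{G/N}$ with $N=\oh{p'}G$, and the Schur--Zassenhaus reduction to $N\cap M=1$ is exactly how the paper begins both proofs. But the proposed mechanisms for finishing each conjecture do not work as written.

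\medskip

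\textbf{Conjecture C.} Your ``upgrade'' claim --- that for $p$-solvable $H$ with $\oh{p'}H=1$, the condition $q\nmid\chi(1)$ for all $\chi\in\irrp{p}H$ forces $q\nmid\chi(1)$ for all $\chi\in\irr H$ --- is true, but it is essentially equivalent to showing $q\nmid|H|$, and neither Fong--Swan nor Isaacs' $\pi$-partial theory yields this. Fong--Swan lifts Brauer characters to ordinary ones of the same degree, but gives no control on whether those lifts have $p'$-degree, so it does not let you pass information from $\irrp{p}H$ to the rest of $\irr H$. The paper bypasses the upgrade entirely: the Navarro--Wolf theorem for $\pi$-separable groups translates $\irrp{p}{G/L}\subseteq\irrp{q}{G/L}$ into $\norm GP\subseteq\norm GQ\cdot L$, and then Hall--Higman's Lemma~1.2.3 forces $Q\subseteq L$; symmetrically $P\subseteq M$, so $[P,Q]\subseteq[M,L]\subseteq M\cap L=1$. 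This is a two-step elementary argument, whereas your route would invoke \cite{LWWZ} (and hence the CFSG) for a statement that in the $\{p,q\}$-solvable case needs nothing of the kind. The ``if'' direction likewise does not need \cite{LWWZ}: once $[P,Q]=1$, Hall--Higman applied to $G/L$ immediately gives $Q\subseteq L$, so $q$ divides no degree in $\irr{G/L}$ at all.

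\medskip

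\textbf{Conjecture B.} Your reduction to $\irrp{p}{\bar G}=\irrp{q}{\bar G}$ for $\bar G=G/NM$ is correct, but the proposed finish is not. You say you will produce a $p'$-degree character of $\bar G$ with degree divisible by $q$; but the equality $\irrp{p}{\bar G}=\irrp{q}{\bar G}$ says precisely that no such character exists, so nothing about $\oh{p}{\bar G}$ will manufacture one --- the contradiction has to come from the interaction with the larger group $G$, not from $\bar G$ alone. The paper's argument is substantially more delicate: it first uses Berkovich's theorem (Lemma~\ref{ber}) to identify $M=\oh pG$ and $L=\oh qG$; then the characterization of $\irrp pG=\irrp qG$ from \cite{NWo} (the $\pi$-separable case of Theorem~\ref{nt}) gives abelian Hall $\pi$-subgroups in $G/K$; Hall--Higman reduces further; and finally a coprime-action result of Malle--Navarro \cite{MN11} on $p'$-degree characters forces $Q\subseteq L$, yielding the contradiction. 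None of these ingredients appears in your sketch, and the vague ``Clifford-theoretic analysis above a faithful character of $\oh p{\bar G}$'' does not substitute for them.
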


\medskip

{We prove Theorem E in Section \ref{sec:pqsolvable}.} In 
a subsequent paper (\cite{NRS2}),  we shall  focus more on Conjectures B and C providing solid evidence towards their validity.

\section{A reduction for Conjecture A}\label{sectionreduction}
In this section, we prove Theorem D, assuming the following
result on almost simple groups, which will be proved in Section \ref{sectionsimples}.
Our reduction is slightly different from the reduction in \cite{LWXZ}
(which uses the so called $p^*$-theory and \cite{W86} ).

As usual, if $H,K$ are subgroups of $G$, then $[H,K]$ is the subgroup generated by the commutators
$[h,k]$, for $h \in H$ and $k \in K$. 

\medskip

We start with two elementary lemmas.

\begin{lem}\label{th}
Let $G$ be a finite group
and let $p$ be a prime. Then $\irrp{p}{B_p(G)}=1_G$
if and only if $G$ is a $p'$-group.
\end{lem}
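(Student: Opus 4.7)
The plan is to handle the two implications separately: the ``if'' direction is essentially immediate from standard block theory, while the ``only if'' direction follows from a short congruence argument using the projective cover of the trivial $kG$-module.

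For the easy direction, assume $p\nmid|G|$. Then every $\chi\in\irr{G}$ has $p$-defect zero, so $\{\chi\}$ is a $p$-block by itself. In particular $\irr{B_p(G)}=\{1_G\}$, whence $\irrp{p}{B_p(G)}=\{1_G\}$.

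For the converse I argue the contrapositive: assuming $p\mid|G|$, I exhibit a nontrivial character in $\irrp{p}{B_p(G)}$. Fix a sufficiently large field $k$ of characteristic $p$ and let $\Phi$ be the ordinary character afforded by the projective cover of the trivial $kG$-module. Since $\Phi$ lies in the principal $p$-block,
$$\Phi=\sum_{\chi\in\irr{B_p(G)}} d_\chi\,\chi,$$
where $d_\chi$ denotes the multiplicity of the trivial Brauer character in the $p$-modular reduction of $\chi$. Two standard facts drive the argument: first, $\Phi(1)$ is divisible by $|G|_p$, because projective $kG$-modules have dimensions divisible by $|G|_p$ (on restriction to a Sylow $p$-subgroup, every projective module is free); second, $d_{1_G}=1$, as the ordinary trivial character reduces to the trivial Brauer character.

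Combining these with $p\mid|G|$ and isolating the $\chi=1_G$ term, reduction modulo $p$ of the displayed identity yields
$$0\equiv \Phi(1)=1+\sum_{1_G\neq\chi\in\irr{B_p(G)}}d_\chi\,\chi(1)\pmod p.$$
If every nontrivial $\chi\in\irr{B_p(G)}$ had $p\mid\chi(1)$, every term in the sum would vanish modulo $p$, forcing $1\equiv 0\pmod p$, which is absurd. Hence some nontrivial $\chi\in\irr{B_p(G)}$ satisfies $p\nmid\chi(1)$, furnishing the required element of $\irrp{p}{B_p(G)}$. I expect no real obstacle here; the argument is essentially a congruence computation, and its only substantive ingredients are the divisibility $|G|_p\mid\Phi(1)$ and the normalization $d_{1_G}=1$.
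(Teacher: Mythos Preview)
Your proof is correct, and it takes a genuinely different route from the paper's. The paper quotes the main theorem of Isaacs--Smith \cite{IS} to deduce that the hypothesis $\irrp{p}{B_p(G)}=\{1_G\}$ forces $G$ to have a normal $p$-complement $K$; then if $K<G$, any nontrivial linear character of the $p$-group $G/K$ inflates to a nontrivial element of $\irrp{p}{B_p(G)}$, contradicting the hypothesis, so $K=G$ and $|G|$ is prime to $p$. Your argument avoids the Isaacs--Smith theorem entirely and stays within elementary block theory: the congruence $\Phi(1)\equiv 0\pmod p$ together with $d_{1_G}=1$ immediately produces a nontrivial $p'$-degree constituent of $\Phi$. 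The trade-off is that the paper's route yields structural information (the normal $p$-complement) as a by-product, at the cost of invoking an external result, whereas your approach is shorter and self-contained but extracts only the existence of the character, which is all the lemma requires.
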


\begin{proof}
The if direction is obvious. Suppose now that $\irrp{p}{B_p(G)}=1_G$. By the main result of \cite{IS}, we know that
$G$ has a normal $p$-complement $K$.
Suppose that $K<G$, and let $1 \ne \lambda \in \irr{G/K}$ linear. Since $K\leq {\rm Ker}(\lambda)$, we have that $\lambda\in{\rm Irr}_{p'}(B_p(G))$, a contradiction.
\end{proof}

As usual, $\irrp{p}G$ denotes the subset of $\irr G$ consisting of irreducible characters of degree not divisible by $p$.
 
 \begin{lem}\label{tr}
 Let $G$ be a finite group,
and let $p$ be a prime. If $\irrp{p}G=\{1\}$, then $G=1$.
 \end{lem}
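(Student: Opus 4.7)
The plan is to derive this statement directly from the previous lemma, with essentially no extra work. First I observe the trivial containment $\irrp{p}{B_p(G)} \subseteq \irrp{p}{G}$, since the characters in the principal $p$-block are in particular irreducible characters of $G$, and ``$p$-prime degree'' is the same condition in both sets. Because $1_G$ always lies in the principal block and has degree $1$, it belongs to $\irrp{p}{B_p(G)}$. Hence the hypothesis $\irrp{p}{G}=\{1_G\}$ forces
$$\irrp{p}{B_p(G)}=\{1_G\}.$$

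Next I would invoke Lemma \ref{th} to conclude that $G$ is a $p'$-group. Once this is known, every irreducible character of $G$ has degree dividing $|G|$ and therefore coprime to $p$, so
$$\irr{G}=\irrp{p}{G}=\{1_G\}.$$
Since $|\irr{G}|$ equals the number of conjugacy classes of $G$, this says $G$ has a single conjugacy class, whence $G=1$.

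There is no real obstacle here: the whole point is that the work has already been done in Lemma \ref{th} (which rests on the main theorem of \cite{IS}), and the present statement is merely the observation that if the $p'$-degree characters of $G$ itself collapse to the trivial one, then in particular the same happens inside the principal block, so the previous lemma applies.
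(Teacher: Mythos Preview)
Your argument is correct, but it takes a heavier route than the paper. You deduce the result from Lemma~\ref{th}, which in turn relies on the Isaacs--Smith theorem \cite{IS}. The paper instead gives a completely elementary one-line proof: from
\[
|G| = 1 + \sum_{1_G \ne \chi \in \irr G} \chi(1)^2
\]
and the hypothesis that every nontrivial $\chi$ has $p \mid \chi(1)$, one sees immediately that $|G| \equiv 1 \pmod p$, so $G$ is a $p'$-group; then $\irr G = \irrp{p}G = \{1_G\}$ and $G=1$. Your approach buys nothing extra and costs the dependence on \cite{IS}, whereas the paper's argument needs only the degree-sum formula. Both reach the same intermediate conclusion (that $G$ is a $p'$-group) and finish identically.
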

 \begin{proof}
 Since $|G|=1 + \sum_{1_G \ne \chi \in \irr G} \chi(1)^2$,
 we deduce that $|G|$ is not divisible by $p$.
 Thus $\irr G=1$ and $G=1$.
  \end{proof}

 As we have mentioned in the introduction,
 the reduction theorem in \cite{LWXZ} contains a gap.
 Indeed, in the first step of the proof of their Theorem 1.4,
 the authors conclude that $G$ has a unique minimal normal subgroup
 by using an argument which is not correct. 
 Suppose that $\pi=\{p,q\}$, 
 that $N$ and $M$ are distinct normal subgroups
 of a finite group $G$ and that $G/N$ and $G/M$ contain nilpotent Hall $\pi$-subgroups. 
 The authors argue that $G$ also contains a nilpotent Hall
 $\pi$-subgroup, because $G$ is isomorphic to a subgroup of $\hat G=(G/N) \times (G/M)$,
 which does have a nilpotent Hall $\pi$-subgroup. It is true, as they say, 
 that every $\{p,q\}$-subgroup of $G$ is contained in a Hall $\pi$-subgroup
 of $\hat G$, by using Wielandt's theorem, but this does not prove that $G$
 has a Hall $\pi$-subgroup. It is false that arbitrary
 subgroups $H$ of $\hat G$  possess
 Hall $\pi$-subgroups: it is enough to consider $\hat G={\rm PSL}_2(16)$, and $H={\sf A}_5$,
 for $\pi=\{3,5\}$. Nevertheless, the conclusion of Step (1) of \cite[Theorem 1.4]{LWXZ}
 is correct. However, the argument needs the CFSG and no less than
 the complete classification
 of the Hall $\pi$-subgroups of finite simple groups. The following is \cite[Corollary 8]{RV},
 which we shall also use.
 
 \begin{lem}\label{vdovin}
 Suppose that $G$ is a finite group. Suppose that
 $N, M \nor G$. If $G/N$ and $G/M$ have nilpotent
 Hall $\pi$-subgroups, then so does $G/(N\cap M)$.
 \end{lem}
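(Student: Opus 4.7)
The plan is to reduce the statement to the (deep) extension-closure property for the class of finite groups that admit a nilpotent Hall $\pi$-subgroup. Setting $L:=N\cap M$ and passing to $G/L$, I may assume $L=1$. Since $N$ and $M$ are then normal in $G$ with trivial intersection, every commutator $[n,m]$ with $n\in N$, $m\in M$ lies in $N\cap M=1$, so $N$ and $M$ centralise one another, and the composition $N\hookrightarrow G\twoheadrightarrow G/M$ identifies $N$ with the normal subgroup $NM/M$ of $G/M$. Symmetrically, $M$ is identified with a normal subgroup of $G/N$.

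Next I would show that $N$ itself admits a nilpotent Hall $\pi$-subgroup. For this I use the routine fact that if $K\triangleleft H$ and $S=\prod_{p\in\pi}S_{p}$ is a nilpotent Hall $\pi$-subgroup of $H$ (written as the internal direct product of its Sylow $p$-subgroups), then
\[
K\cap S \;=\; \prod_{p\in\pi}(K\cap S_{p})
\]
is a nilpotent Hall $\pi$-subgroup of $K$; this reduces to the standard observation that $K\cap S_{p}$ is a Sylow $p$-subgroup of $K$ whenever $S_{p}\in\syl{p}{H}$ and $K\triangleleft H$, together with the pairwise commutation of the $S_{p}$ inside the nilpotent group $S$. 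Applying this with $H=G/M$ and $K=NM/M\cong N$ produces a nilpotent Hall $\pi$-subgroup of $N$.

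The final step is to invoke the deep theorem that this class is closed under group extensions: if $K\triangleleft H$ and both $K$ and $H/K$ admit nilpotent Hall $\pi$-subgroups, then so does $H$. Taking $K=N\triangleleft G$, where $N$ has a nilpotent Hall $\pi$-subgroup from the previous paragraph and $G/N$ has one by hypothesis, this yields a nilpotent Hall $\pi$-subgroup of $G$, as required.

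The main obstacle in this plan is precisely the extension-closure statement in the final step. It is \emph{not} accessible from classical Hall--Wielandt theory -- note that the easy ``descent to normal subgroups'' argument used above has no counterpart for quotients in the opposite direction -- and it fundamentally depends on the Classification of Finite Simple Groups together with the full classification of the nilpotent Hall $\pi$-subgroups of the (almost) simple groups. This is the content of \cite{RV}, and Corollary 8 there records exactly the statement of the lemma.
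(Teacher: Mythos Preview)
Your proposal is correct and lands exactly where the paper does: the paper's ``proof'' is simply a direct citation of \cite[Corollary~8]{RV}, with no argument given. What you have written is essentially the standard derivation of that corollary from the main extension-closure theorem of \cite{RV}: pass to $G/(N\cap M)$, observe that $N$ inherits a nilpotent Hall $\pi$-subgroup from $G/M$ via $N\cong NM/M\trianglelefteq G/M$ (your intersection argument for this step is fine, since in a nilpotent $S$ the $p$-components of any element are powers of that element and hence stay in $K$), and then apply the CFSG-dependent extension theorem to the pair $N\trianglelefteq G$. So there is no genuine difference in approach---you have just unpacked what the paper leaves as a bare reference, and you correctly identify that the only nontrivial ingredient is the extension-closure result from \cite{RV}.
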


We will also make use of the Alperin-Dade's theory of isomorphic blocks.

\begin{thm}\label{isomblocks}
Suppose that $N$ is a normal subgroup of $G$, with $G/N$ a $p'$-group.
Let $P \in \syl p G$ and assume that $G=N\cent GP$. Then restriction of characters defines
a natural bijection between ${\rm Irr}(B_p(G))$ and ${\rm Irr}(B_p(N))$.

\end{thm}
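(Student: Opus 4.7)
The plan is to invoke the Alperin--Dade theory of isomorphic blocks, reducing the statement to two ingredients: Brauer's Third Main Theorem and a $G$-invariance claim for characters in $B_p(N)$. First, since $G/N$ is a $p'$-group, $P\in\syl pG$ lies in $N$ and is in fact a Sylow $p$-subgroup of $N$, hence a defect group of $B_p(N)$. Brauer's Third Main Theorem then gives that $B_p(G)$ is the unique block of $G$ covering $B_p(N)$, so for every $\chi\in\irr{B_p(G)}$, Clifford's theorem implies that $\chi_N$ is a sum of characters belonging to $\irr{B_p(N)}$, and conversely every $\theta\in\irr{B_p(N)}$ is covered by some $\chi\in\irr{B_p(G)}$.

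The crucial step is to show that each $\theta\in\irr{B_p(N)}$ is $G$-invariant. Given $g\in G$, use the hypothesis $G=N\cent GP$ to write $g=nc$ with $n\in N$ and $c\in\cent GP$; since $\theta^n=\theta$, this reduces the problem to proving $\theta^c=\theta$ for $c\in\cent GP$. This is precisely the content of Alperin--Dade's theorem: an automorphism of $N$ centralizing a full defect group of a block $b$ of $N$ acts trivially on the ordinary irreducible characters in $b$. The standard proof proceeds through a source algebra Morita equivalence between $b$ and its Brauer correspondent in $P\cent NP$, showing that the block algebras are isomorphic as interior $P$-algebras in a way that intertwines the action of $\cent GP$. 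I expect this to be the main technical obstacle in writing out a self-contained proof.

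Once $G$-invariance is in hand, the bijection follows by standard coprime extension. Since $G/N$ is a $p'$-group and $\theta$ is $G$-invariant, $\theta$ possesses an extension $\hat\theta\in\irr G$; by Gallagher's theorem, all extensions are $\{\hat\theta\lambda : \lambda\in\irr{G/N}\}$. The remaining point is that exactly one such extension lies in $B_p(G)$. Because $G/N$ is a $p'$-group we have $\irr{B_p(G/N)}=\{1_{G/N}\}$, and a standard block-theoretic argument on tensor products with inflated characters of $G/N$ shows that $\hat\theta\lambda\in B_p(G)$ if and only if $\lambda$ lies in the principal block of $G/N$, forcing $\lambda=1_{G/N}$. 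Combining uniqueness of the extension with the surjectivity from the first paragraph, restriction $\chi\mapsto\chi_N$ yields the desired natural bijection between $\irr{B_p(G)}$ and $\irr{B_p(N)}$.
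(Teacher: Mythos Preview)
The paper does not prove this result; it simply cites Alperin \cite{Alp76} (for $G/N$ solvable) and Dade \cite{Dad77} (in general). Your sketch is an attempt to unpack those references, so let me comment on its internal logic.

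There is a genuine gap in your third paragraph. The assertion ``since $G/N$ is a $p'$-group and $\theta$ is $G$-invariant, $\theta$ possesses an extension $\hat\theta\in\irr G$'' is not valid as stated: $G$-invariance is necessary but not sufficient for extendibility, and the obstruction lives in $H^2(G/N,\CC^\times)$, which need not vanish merely because $|G/N|$ is prime to $p$. (Your uniqueness argument, by contrast, is essentially correct: if $\chi$ and $\chi\lambda$ both lie in $B_p(G)$ then comparing central characters on classes meeting $\cent GP$ forces $\lambda$ to be trivial on $\cent GP$, hence on $G=N\cent GP$.)

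There is also a structural issue. The theorem you are asked to prove \emph{is} the Alperin--Dade theorem, so invoking ``Alperin--Dade's theorem'' for the $\cent GP$-invariance step is circular unless you mean the underlying block-algebra statement. But that underlying statement already delivers the conclusion directly: what Alperin and Dade actually prove is that the block algebras $\mathcal{O}Ge_{B_p(G)}$ and $\mathcal{O}Ne_{B_p(N)}$ are isomorphic (indeed as interior $P$-algebras), and the isomorphism is compatible with the inclusion $N\hookrightarrow G$. Simple modules for isomorphic algebras correspond, so restriction $\chi\mapsto\chi_N$ is immediately a bijection $\irr{B_p(G)}\to\irr{B_p(N)}$; no separate invariance-plus-extension argument is needed. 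Your route through invariance and coprime extension is therefore a detour that both relies on the same hard input and introduces the unjustified extension step above.
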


\begin{proof}
The case where $G/N$ is solvable was proved in \cite{Alp76} and the general case
in \cite{Dad77}. 
\end{proof}

We prove next that Conjecture A is true provided the following question on almost simple groups holds true. As a consequence, we 
 obtain Theorem D (a).

\begin{question}\label{questionalmostsimple}
Let $p$ and $q$ be different prime numbers.
Suppose that
 $G$ is almost simple with socle $S$, a non-abelian simple group. 
 Assume that  $pq\mid |S|$ and
 that  $G=S\cent G P=S\cent G Q$, for every $P\in{\rm Syl}_p(S)$ and $Q\in{\rm Syl}_q(S)$.
 If $\irrp{p}{B_p(G)} \cap  \irrp{q}{B_q(G)}=1$,
then there is a Sylow $p$-subgroup $P_0$
of $G$ and a Sylow $q$-subgroup $Q_0$ of $G$ such that $[P_0,Q_0]=1$.
\end{question}

 Notice that Question \ref{questionalmostsimple} is slightly weaker than Conjecture A.
 
 \medskip
 In the final step of the main theorem of this section, we use some elementary group theory.
 
 \begin{lem}\label{expla}
 Suppose that $G$ is a finite group, $S \nor G$, $G=S\cent G{P_1}$, where $P_1 \in \syl pS$.
 Suppose that $|S|$ has order not divisible by $q$ for some prime $q$. If $G/S$ has a nilpotent Hall
 $\{p,q\}$-subgroup, then $G$ has a nilpotent Hall $\{p,q\}$-subgroup.
  \end{lem}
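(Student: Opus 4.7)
The plan is to construct the nilpotent Hall $\{p,q\}$-subgroup of $G$ inside $C:=\cent G{P_1}$, and then merge it with $P_1$ to get the right Sylow $p$-subgroup of $G$. The key isomorphism to exploit is $G/S\cong C/\cent S{P_1}$, which follows from $G=SC$ and $C\cap S=\cent S{P_1}$. This transfers the nilpotent Hall $\{p,q\}$-subgroup hypothesis from $G/S$ down to $C/\cent S{P_1}$, which is where we will actually work.

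First I would dissect $\cent S{P_1}$. Since $P_1\in\syl pS$, $Z(P_1)$ is the Sylow $p$-subgroup of $\cent S{P_1}$ and it lies in the center of $C$ (as $C$ centralizes $P_1$); hence by a standard Burnside/Schur-Zassenhaus argument, $\cent S{P_1}=Z(P_1)\times K$ with $K=\oh{p'}{\cent S{P_1}}$ characteristic, hence normal in $C$. Because $q\nmid|S|$, the subgroup $K$ is a $\{p,q\}'$-group. Next, pull back the nilpotent Hall $\{p,q\}$-subgroup of $C/\cent S{P_1}$ to a subgroup $L\le C$ containing $\cent S{P_1}$, and apply Schur--Zassenhaus to the normal $\{p,q\}'$-subgroup $K$ of $L$ to obtain a complement $M$, a Hall $\{p,q\}$-subgroup of $L$, and therefore of $C$ (since $|L|_p=|C|_p$ and $|L|_q=|C|_q$). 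One checks $M\cap\cent S{P_1}=Z(P_1)$, so $M/Z(P_1)\cong L/\cent S{P_1}$ is nilpotent.

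The key structural point is then that $M$ itself is nilpotent: $Z(P_1)$ is a central $p$-subgroup of $M$, and $M/Z(P_1)=\overline D\times\overline Q$ is the nilpotent product of Sylow subgroups. Lifting, the Sylow $p$-part $M_p$ is a $p$-group of order $|Z(P_1)|\,|\overline D|$ and is normal in $M$; the Sylow $q$-part $M_q$ has order $|\overline Q|$, and since its preimage equals $Z(P_1)\times M_q$ by Schur--Zassenhaus in the coprime situation, $M_q$ is characteristic there and hence normal in $M$. With both Sylows normal, $M=M_p\times M_q$ is nilpotent.

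Finally I assemble the Hall $\{p,q\}$-subgroup of $G$. Setting $P_0:=M_p\cdot P_1$, the facts that $M_p\le C$ centralizes $P_1$ and that $M_p\cap P_1=Z(P_1)$ give $|P_0|=|M_p||P_1|/|Z(P_1)|=|G|_p$, so $P_0\in\syl pG$. Also $M_q$ lies in $\syl qC=\syl qG$ (as $q\nmid|S|$ forces $|C|_q=|G|_q$), and it centralizes both $M_p$ (since $M$ is nilpotent) and $P_1$ (as $M_q\le C$), and hence centralizes $P_0$. Therefore $P_0\times M_q$ is a nilpotent Hall $\{p,q\}$-subgroup of $G$. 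The main technical step, and the one on which the whole argument pivots, is the nilpotence of $M$ inside $C$; everything before it is bookkeeping and everything after it is glueing.
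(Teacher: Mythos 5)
Your argument is correct, and it reaches the conclusion by a genuinely different route than the paper. The paper proceeds by induction on $|G|$: it first reduces to the case $G=P_1\cent G{P_1}$ and $S=P_1\cent S{P_1}$ (so $P_1\nor G$), splits $S=P_1\times X$ by Schur--Zassenhaus and inducts modulo $X$ to reduce to $\cent S{P_1}=\zent{P_1}$, and then lifts the nilpotent Hall subgroup of $\cent G{P_1}/\zent{P_1}$ to subgroups $P_2,Q$ with $[P_2,Q]\subseteq\zent{P_1}$; the punchline is the coprime-action lemma $[P_2,Q,Q]=1\Rightarrow[P_2,Q]=1$, after which $P=P_1P_2$ and $Q$ do the job. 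You instead avoid induction entirely: you transfer the hypothesis to $C=\cent G{P_1}$ via $G/S\cong C/\cent S{P_1}$, dispose of the $\{p,q\}'$-part by Burnside's normal $p$-complement theorem ($\cent S{P_1}=\zent{P_1}\times K$) together with Schur--Zassenhaus applied to $K\nor L$, and then obtain nilpotency of the resulting Hall subgroup $M$ from the fact that it is a central extension of a nilpotent $\{p,q\}$-group by the central $p$-subgroup $\zent{P_1}$ (equivalently, both Sylow subgroups of $M$ are normal); the final gluing $P_0=M_pP_1$, $[P_0,M_q]=1$ matches the paper's last step. Both arguments exploit the same underlying fact, namely that everything in $C$ centralizes $\zent{P_1}$, but your replacement of the inductive reduction plus the $[P_2,Q,Q]=1$ trick by a direct construction is self-contained and, if anything, more transparent. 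The only steps you wave at ("one checks $M\cap\cent S{P_1}=\zent{P_1}$") do need the small observation that $\zent{P_1}$ is central in $L$, so $\zent{P_1}M$ is a $\{p,q\}$-subgroup containing the Hall subgroup $M$, forcing $\zent{P_1}\le M$; this is routine and closes the point, so there is no gap.
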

  
  \begin{proof}
  We use induction on $|G|$.  By induction, we may assume that  $G=P_1\cent G{P_1}$ and that $S=P_1\cent S{P_1}$.
  In particular, $P_1$ and $\cent G{P_1}$ are normal in $G$. 
  By the Schur-Zassenhaus theorem, we can write $S=P_1 \times X$, for some normal $p'$-subgroup $X$ of $G$.
  If $X>1$, we can apply induction to $G/X$, to conclude that $G/X$ has a nilpotent Hall $\{p,q\}$-subgroup.
  Since $X$ is not divisible by $p$ and $q$, again by Schur-Zassenhaus, we conclude that $G$ has a nilpotent Hall $\{p,q\}$-subgroup.
  Therefore, we may assume that $\cent S{P_1}=\zent{P_1}$.
  By hypothesis, $G/S$ and therefore $\cent G{P_1}/\zent{P_1}$ have a nilpotent 
  Hall $\{p,q\}$-subgroup. Let $Q \in \syl q{\cent G{P_1}}$, and let $P_2 \in \syl p{\cent G{P_1}}$
  such that $[P_2, Q]\sbs \zent{P_1}$. Since $[P_1, Q]=1$, we have that $[P_2,Q,Q]=1$. By coprime action (see \cite[Lemma 4.29]{Is2}),
  we conclude that $[P_2, Q]=1$.
  Notice that $P=P_1P_2 \in \syl pG$ and that $[P,Q]=1$.  \end{proof}
 
\begin{thm}\label{thmreduction}
Let $p$ and $q$ be primes such that Question \ref{questionalmostsimple} holds, and let $G$ be a finite group.
 Suppose that $${\rm Irr}_{p'}(B_p(G))\cap {\rm Irr}_{q'}(B_q(G))=\{1_G\} \, .$$ Then there exist $P\in{\rm Syl}_p(G)$ and $Q\in{\rm Syl}_q(G)$ such that  $[P,Q]=1$. 
\end{thm}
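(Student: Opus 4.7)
The plan is to induct on $|G|$ and reduce the statement to Question~\ref{questionalmostsimple}. If $p\nmid |G|$ or $q\nmid |G|$ the conclusion is immediate. The key observation enabling the induction is that inflation embeds $\irrp{p}{B_p(G/N)}$ into $\irrp{p}{B_p(G)}$ for every $N\nor G$ (preserving degrees and sending principal blocks to principal blocks), and likewise for $q$. Therefore the hypothesis descends to every proper quotient of $G$, and by induction every such quotient possesses a nilpotent Hall $\{p,q\}$-subgroup.

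If $G$ has two distinct minimal normal subgroups $N_1, N_2$, then $N_1\cap N_2 = 1$, and Lemma~\ref{vdovin} applied to the two quotients $G/N_i$ yields a nilpotent Hall $\{p,q\}$-subgroup in $G$ itself. One may therefore assume that $G$ has a unique minimal normal subgroup $N$, and one then splits into cases based on the isomorphism type of $N$. If $N$ is elementary abelian of order prime to $pq$, Schur--Zassenhaus lifts a Hall $\{p,q\}$-subgroup from $G/N$. If $N$ is an elementary abelian $p$-group (the $q$-case is symmetric), so that $\oh{p'}G = 1$, lifting a commuting pair $(\bar P, \bar Q)$ from $G/N$ gives $P\in\syl p G$ and $Q\in\syl q G$ with $[P,Q]\le N$; coprime action on $N$, combined with an Alperin--Dade argument (Theorem~\ref{isomblocks}) relating linear characters of $N$ to characters of $p'$- and $q'$-degree in both principal blocks, is designed to force $[P,Q] = 1$ (or else to produce a nontrivial element of $\irrp{p}{B_p(G)}\cap\irrp{q}{B_q(G)}$, contradicting the hypothesis).

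The principal remaining case is that $N = S_1\times\cdots\times S_k$ is a direct product of copies of a nonabelian simple group $S$. The goal is to apply Question~\ref{questionalmostsimple} to the almost simple group $A := N_G(S_1)/\cent G{S_1}$, with socle $\bar S \cong S$. The argument proceeds in three steps: first, transfer the hypothesis on $G$ to the analogous hypothesis on $A$ via Clifford theory applied simultaneously to the $p$- and $q$-blocks of $N$; second, verify the centralizer conditions $A = \bar S\cent A P = \bar S\cent A Q$ demanded by Question~\ref{questionalmostsimple}, using Lemma~\ref{expla} to dispose of primes $r\in\{p,q\}$ that fail to divide $|S|$; third, lift the commuting Sylows produced in $A$ back to $G$.

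The main obstacle is the first of these three steps. As the introduction emphasises, a single character of a normal $\{p,q\}'$-section of $G$ can have \emph{two} distinct canonical extensions upstairs, one landing in $B_p$ and the other in $B_q$. The two Clifford correspondences (one for each prime) therefore need not align, and the heart of the reduction is to show that any putative nontrivial character in $\irrp{p}{B_p(A)}\cap\irrp{q}{B_q(A)}$ still produces, via appropriate extensions and inductions across the two block structures, a corresponding nontrivial character in $\irrp{p}{B_p(G)}\cap\irrp{q}{B_q(G)}$ — the very ambiguity the authors warn about.
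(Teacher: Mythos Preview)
Your opening moves (descent of the hypothesis to quotients, reduction to a unique minimal normal subgroup via Lemma~\ref{vdovin}, and Schur--Zassenhaus in the $\pi'$-case) agree with the paper. After that your route diverges from the paper's and runs into real gaps.

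The paper does \emph{not} pass to $A=N_G(S_1)/\cent G{S_1}$ and then attempt to push the hypothesis down and the conclusion back up. Instead it proves one structural fact that does all the work at once: for \emph{every} nontrivial $N\nor G$ and every $P_1\in\syl pN$, $Q_1\in\syl qN$, one has $G=N\cent G{P_1}=N\cent G{Q_1}$ (their Step~6, reached through Steps~4--5 using a block-covering argument based on \cite[Lemma~3.1]{NT12}, Theorem~\ref{isomblocks}, and Lemmas~\ref{th}--\ref{tr}). In the abelian case this forces the minimal normal $p$-subgroup $N$ to be \emph{central}, after which coprime action ($Q$ acts trivially on both $N$ and $P/N$, hence on $P$) finishes immediately. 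Your proposed ``Alperin--Dade argument relating linear characters of $N$ to characters in both principal blocks'' is not an argument yet: Theorem~\ref{isomblocks} requires $G/N$ to be $p'$ and $G=N\cent GP$, neither of which you have, and you give no mechanism by which a nontrivial $\lambda\in\irr N$ would land under a character of $\irrp{p}{B_p(G)}\cap\irrp{q}{B_q(G)}$. In the nonabelian case, Step~6 forces $k=1$: writing $g\in G$ as $g=xk$ with $x\in\cent G{P}$ and $k\in K$ shows $P_1^g\le S_1\cap S_1^g$, so $S_1\nor G$, contradicting minimality unless $K=S_1$. Thus $\cent GS=1$ and $G$ is itself almost simple with socle $S$; the centraliser hypotheses of Question~\ref{questionalmostsimple} are then exactly Step~6 applied to $N=S$, and Lemma~\ref{expla} disposes of the case $q\nmid|S|$.

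So the obstacle you correctly flag --- that the canonical extensions $\theta_p$ and $\theta_q$ may differ, making it unclear how to transport characters between $A$ and $G$ --- is precisely why the paper does \emph{not} follow your route. There is no Clifford-theoretic transfer and no lifting step, because $G=A$. Your steps~1 and~3 in the nonabelian case are not merely sketches: they face exactly the difficulty the introduction warns about, and you have offered no way around it.
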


\begin{proof}

Let $\pi=\{p,q\}$. We want to show that $G$ has a nilpotent Hall $\pi$-subgroup. We proceed by induction on $|G|$. 
\medskip

\textit{Step 0. If $1<N$ is normal in $G$, then $G/N$ has a nilpotent Hall $\pi$-subgroup.}
\smallskip

We have that $${\rm Irr}_{p'}(B_p(G/N))\cap{\rm Irr}_{q'}(B_q(G/N))\subseteq{\rm Irr}_{p'}(B_p(G))\cap {\rm Irr}_{q'}(B_q(G))=\{1_G\},$$
and we apply the inductive hypothesis.

\medskip

\textit{Step 1. $G$ has exactly
 one minimal normal subgroup $K$.}
 \smallskip
 
 This follows from Lemma \ref{vdovin} and Step 0.
 
 \medskip
 
\textit{Step 2. If $1<N$ is a normal subgroup of $G$, then $p$ divides $|N|$ or $q$ divides $|N|$.}
\smallskip

Otherwise, by Step 0, we have that $G/N$
has a nilpotent Hall $\pi$-subgroup $H/N$.  Since $|N|$ is coprime to $pq$ then there is $H_1$ with $H=H_1N$ and $H_1\cap N=1$,
by the Schur-Zassenhaus theorem.
 Then $H_1\cong H/N$ is a nilpotent Hall $\{p,q\}$-subgroup of $G$.
 
 \medskip
 
\textit{Step 3. If $1<N$ is
a normal subgroup of $G$,
then ${\rm Irr}_{p'}(B_q(G/N))={\rm Irr}(B_q(G/N))$ and  ${\rm Irr}_{q'}(B_p(G/N))={\rm Irr}(B_p(G/N))$.}
\smallskip

By Step 0,
we have that there are $P\in{\rm Syl}_{p}(G)$ and $Q\in{\rm Syl}_q(G)$ with $[PN/N,QN/N]=1$. By \cite[Theorem 4.1]{MN20},
 we have that ${\rm Irr}_{p'}(B_q(G/N))={\rm Irr}(B_q(G/N))$ and  ${\rm Irr}_{q'}(B_p(G/N))={\rm Irr}(B_p(G/N))$, as wanted.

\medskip

\textit{Step 4. If $1<N\lhd G$, $P_1\in{\rm Syl}_p(N)$ with $\cent G {P_1}\subseteq N$, 
then $|G/N|$ is not divisible by $q$, $G=N\cent GQ$ for some Sylow $q$-subgroup of $G$, and  ${\rm Irr}_{p'}(B_p(N))\cap{\rm Irr}_{q'}(B_q(N))=\{1_N\}$.}
\smallskip

By the Third Main Theorem (see \cite[Theorem 6.7]{nbook}) and \cite[Lemma 3.1]{NT12},
 we have that $B_p(G)$ is the only block of $G$ covering $B_p(N)$. By \cite[Theorem 9.2]{nbook},
 we have that ${\rm Irr}(G/N) \subseteq {\rm Irr}(B_p(G))$. Then ${\rm Irr}(B_q(G/N)) \subseteq {\rm Irr}(B_p(G))$.
Since $G/N$ is a $p'$-group, we have that  
${\rm Irr}_{q'}(B_q(G/N))\subseteq {\rm Irr}_{p'} (B_p(G))\cap{\rm Irr}_{q'}(B_q(G)))=\{1_G\}$.
Hence
$G/N$ is a $q'$-group by Lemma \ref{th}.

Let $Q\in{\rm Syl}_{q}(G)$. Then $Q\subseteq N$, $G=N\norm G Q$ and $H=N\cent G Q\lhd G$. By the previous argument,
notice that $B_q(G)$ is the only $q$-block covering $B_q(H)$ and $\irr{G/H} \sbs {\rm Irr}_{q'}(B_q(G))$. 
Let $P_2 \in \syl pH$ containing $P_1$.
Since $\cent G{P_2} \sbs
\cent G {P_1} \subseteq N\subseteq H$, again we have that
$B_p(G)$ is the only $p$-block covering $B_p(H)$, and $\irr{G/H} \sbs \irr{B_p(G)}$. 
Then $${\rm Irr}_{p'}(G/H) \subseteq{\rm Irr}_{p'}(B_p(G))\cap{\rm Irr}_{q'}(B_q(G))=\{1_G\}$$ and $G=H$,
by Lemma \ref{tr}.

 Now, by Theorem \ref{isomblocks}  we have that restriction defines a bijection $${\rm Irr}(B_q(G))\rightarrow{\rm Irr}(B_q(N)).$$ Take $\theta\in{\rm Irr}_{p'}(B_p(N))\cap{\rm Irr}_{q'}(B_q(N))$, and let $\chi\in{\rm Irr}_{q'}(B_q(G))$
 be extending $\theta$. Since $B_p(G)$ is the only $p$-block covering $B_p(N)$ we have $\chi\in{\rm Irr}_{p'}(B_p(G))\cap{\rm Irr}_{q'}(B_q(G))$ and hence $\chi=1_G$ and therefore $\theta=1_N$, as desired.

\medskip

\textit{Step 5. Let $N$ be a proper normal subgroup of $G$ and let $P\in{\rm Syl}_p(N)$ and $Q\in{\rm Syl}_q(N)$. Then $\cent G P\not\subseteq N$ and $\cent G Q\not\subseteq N$. }
\smallskip

We prove for instance
that $\cent G P\not\subseteq N$. Suppose the contrary and let $M$ be a maximal normal subgroup of $G$ with $N\subseteq M$. Let $R\in{\rm Syl}_p(M)$ with $P\subseteq R$. Then $\cent G R\subseteq\cent G P\subseteq N\subseteq M$ and by Step 4 we have $q\nmid |G/M|$ and ${\rm Irr}_{p'}(B_p(M))\cap {\rm Irr}_{q'}(B_q(M))=\{1_M\}$. By induction there exists $S\in{\rm Syl}_p(M)$ and $T\in{\rm Syl}_q(M)$ with $[S,T]=1$. Since $q\nmid |G/M|$ we have $T\in{\rm Syl}_q(G)$. If $\cent G T\subseteq M$, by Step 4 (changing $p$ by $q$) we have that $p\nmid |G/M|$ and then $S\in{\rm Syl}_p(G)$ and we are done.  Hence we may assume $\cent G T\not\subseteq M$.

By the Frattini Argument we have $G=M\norm G T$ and hence $M\cent G T$ is normal in $G$. By the maximality of $M$ we have $G=M\cent G T$. Taking $p$-parts we have 

$$|G|_p=|M\cent G T|_p=\frac{|M|_p|\cent G T|_p}{|\cent M T|_p}.$$

Since $S\subseteq\cent M T$, we have that $|M|_p=|S|=|\cent M T|_p$ and hence

$$|G|_p= |\cent G T|_p.$$ Let $W\in{\rm Syl}_p(\cent G T)$, then  $W\in{\rm Syl}_p(G)$ and $[W,T]=1$, so we are done. Hence $\cent G P\not\subseteq N$. Analogously we prove $\cent G Q\not\subseteq N$ .

\medskip

\textit{Step 6. Let $N$ be a normal subgroup of $G$ and let $P\in{\rm Syl}_p(N)$ and $Q\in{\rm Syl}_q(N)$. Then $N\cent G P=G=N\cent G Q$. }
\smallskip

Take $H=N\cent G P$. Then by the Frattini Argument $H$ is normal in $G=N\norm G P$. Take $P_0\in{\rm Syl}_p(H)$ containing $P$. Then $\cent G {P_0}\subseteq\cent G P\subseteq H$. By Step 5, $H=G$. The same reasoning shows the equality $G=N\cent G Q$. 

\medskip

\textit{Step 7. The Fitting subgroup of $G$, ${\rm\textbf{F}}(G)$, is trivial. }
\smallskip

Otherwise,
let $U$ be a minimal normal abelian subgroup of $G$. By Step 2, we may assume that $U$ is
a $p$-group. By Step 6,
we have that $G=U\cent GU=\cent GU$, and we conclude that $U$
is central in $G$.   Since $U>1$, by Step 0,
there exist $Q\in{\rm Syl}_q(G)$ and $P\in{\rm Syl}_p(G)$ with $[QU/U,P/U]=1$. Then $Q$ acts trivially on $P/U$ and $U$,
and it follows that $Q$ acts trivially on $P$, by coprime action. 

\medskip

\textit{Step 8. The unique minimal normal subgroup $K$ of $G$
is a non-abelian simple group. So $G$ is almost simple with socle $S=K$. Moreover $pq\mid |S|$ and $G=S\cent G P=S\cent G Q$, for every $P\in{\rm Syl}_p(S)$ and $Q\in{\rm Syl}_q(S)$.}

By Step 7,  we know that $K=S_1\times\cdots\times S_t$, where $S=S_1$ is a non-abelian simple group and $S_i=S^{g_i}$ for some $g_i\in G$, for $i=1,\ldots,t$. Suppose that $t>1$, so in particular $K<G$. By Step 2,
 we have that $p$ divides $|K|$ or $q$ divides $|K|$. We may assume without loss of generality that $p$ divides $|K|$. Let
  $P\in{\rm Syl}_p(K)$ and let $P_1=P\cap S_1
  \in \syl p{S_1}$. By Step 6, we have that $G=K\cent G P$. Now, if  $g\in G$,
  then $g=xk$ for some $x\in \cent GP$ and $k\in K$, and we have $P_1^g=P_1^k \sbs S_1^k=S_1$ for some $k\in K$.
  Hence $P_1^g\leq S_1^g\cap S_1$. Since $P_1$ is non-trivial,
   we conclude that $S_1^g= S_1$ and $S_1$ is normal in $G$, contradicting the minimality of $N$. Hence $t=1$ and $K=S$ is non-abelian simple as wanted. Since $S$ is the unique minimal normal subgroup,
   we have that  $\cent G S=1$ and $G$ is almost simple with socle $S$.

 By Step 6,  we know that $p$ or $q$ divide $|S|$. We may assume that $p$ divides $|S|$, so we need to prove that $q\mid |S|$.
 Otherwise, we apply Lemma \ref{expla}.

Now the conclusion of the theorem holds by Question \ref{questionalmostsimple}.
\end{proof}

As a consequence of Theorem \ref{thmreduction} and Theorem \ref{almostsimple} (which we prove in the
next section),  we obtain Theorem D.

\medskip

\section{Almost Simple Groups}\label{sectionsimples}
In this section, we prove the following: 

\begin{thm}\label{almostsimple}
Let $p$ be an odd prime.
Suppose that
 $A$ is almost simple with socle $S$, a non-abelian simple group. 
 Assume that  $p\mid |S|$ and
 that  $A=S\cent A P=S\cent A Q$, for every $P\in{\rm Syl}_p(S)$ and $Q\in{\rm Syl}_2(S)$.
 If $\irrp{p}{B_p(A)} \cap  \irrp{2}{B_2(A)}=1$,
then there are a Sylow $p$-subgroup $P_0$
of $A$ and a Sylow $2$-subgroup $Q_0$ of $A$ such that $[P_0,Q_0]=1$.
\end{thm}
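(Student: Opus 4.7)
The plan is to use the Classification of Finite Simple Groups. First one translates the hypothesis to a character-theoretic statement about $S$: the condition $A=S\cent AP$ with $P\in\syl pS$ implies (via the standard consequence of Brauer's Third Main Theorem used also in Step~4 of Theorem~\ref{thmreduction}) that $B_p(A)$ is the unique $p$-block of $A$ covering $B_p(S)$, and analogously $B_2(A)$ is the unique $2$-block covering $B_2(S)$. Consequently, any irreducible character of $A$ lying above a character $\theta\in\irr{B_p(S)}\cap\irr{B_2(S)}$ necessarily lies in $\irr{B_p(A)}\cap\irr{B_2(A)}$. Thus to contradict the hypothesis it suffices to exhibit a nontrivial $A$-invariant $\theta\in\irr{B_p(S)}\cap\irr{B_2(S)}$ of degree coprime to $2p$ that extends to a character $\chi\in\irr A$ with $\chi(1)=\theta(1)$. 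Alternatively, if no such $\theta$ exists, one must directly produce commuting Sylow $p$- and $2$-subgroups of $S$, which via the transfer-style argument in Lemma~\ref{expla} yield commuting Sylow subgroups of $A$.

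The remainder runs through the families of finite non-abelian simple groups. For the sporadic groups (and the Tits group) we consult the ATLAS and the GAP character table and block libraries and read off the answer directly. For alternating groups $\fA_n$ with $n\geq 5$ we use the Nakayama--Brauer description of the principal $p$-block of $\sym_n$ as those characters $\chi^\lambda$ indexed by partitions $\lambda$ of $n$ with empty $p$-core (and analogously for $p=2$); for $n$ not too small, partitions with empty $p$-core, empty $2$-core, and $\chi^\lambda$ of $2p'$-degree are supplied via hook-length analysis, and both $A$-invariance and extendibility are controlled because $\out{\fA_n}$ has order at most~$4$. For groups of Lie type in defining characteristic $r\in\{2,p\}$, a theorem of Humphreys places every character of nonzero $r$-defect in the principal $r$-block, so the task reduces to locating a suitable character in the principal block for the other prime, obtainable via Deligne--Lusztig or Harish--Chandra induction from a carefully chosen Levi subgroup.

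The main obstacle is the cross-characteristic case for groups of Lie type, $r\notin\{2,p\}$. Here the key tool is the $d$-Harish--Chandra theory of Brou\'e--Malle--Michel: if $d_\ell$ denotes the multiplicative order of $q$ modulo $\ell$, then the unipotent characters in $B_\ell(S)$ are the constituents of Harish--Chandra induction of $d_\ell$-cuspidal unipotent characters from $d_\ell$-split Levi subgroups. One then searches for a common unipotent (or suitable Lusztig-series) character lying in both the principal $d_p$- and $d_2$-series with Lusztig degree coprime to $2p$, using the explicit degree formulae; $A$-invariance and extendibility are controlled by the well-understood action of diagonal, field, and graph automorphisms on these characters together with the restrictions on $A/S$ imposed by $A=S\cent AP=S\cent AQ$. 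The genuine difficulty lies in a short list of small-rank exceptions --- notably $\PSL_2(q)$, $\PSL_3(q)$, $\PSU_3(q)$, and the Suzuki and Ree families, together with a handful of small classical groups --- where the supply of characters of small $2p'$-degree is thin enough that either ad hoc constructions are required to produce $\theta$, or the commuting-Sylows alternative genuinely occurs and must be separately recognised. These small cases both complete the contradiction in the generic situation and identify precisely the exceptional configurations in which the conclusion of the theorem is obtained via commuting Sylow subgroups rather than through a character-theoretic contradiction.
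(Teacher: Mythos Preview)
Your proposal has a genuine gap at its foundation. You assert that $A=S\cent AP$ with $P\in\syl pS$ implies, ``via the standard consequence of Brauer's Third Main Theorem used also in Step~4 of Theorem~\ref{thmreduction},'' that $B_p(A)$ is the unique $p$-block of $A$ covering $B_p(S)$, and likewise for $q=2$. This is false, and the citation is inapt: in Step~4 the hypothesis is $\cent G{P_1}\subseteq N$, essentially the \emph{opposite} of $G=N\cent G{P_1}$. Under your hypothesis one has $|A/S|$ prime to $p$ (by Theorem~\ref{A:Sp'}(1)), and then Theorem~\ref{isomblocks} says that restriction is a bijection $\irr{B_p(A)}\to\irr{B_p(S)}$: each $\theta\in\irr{B_p(S)}$ has a \emph{unique} extension $\theta_p\in\irr{B_p(A)}$, while its other extensions lie in non-principal $p$-blocks. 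Similarly there is a unique $\theta_2\in\irr{B_2(A)}$. The real difficulty---flagged in the introduction and made explicit in Remark~\ref{rem:oddp1complication}---is that $\theta_p$ and $\theta_2$ need not coincide. Hence merely producing an $A$-invariant $\theta\in\irr{B_p(S)}\cap\irr{B_2(S)}$ of $\pi'$-degree that extends to $A$ is \emph{not} sufficient; you must identify a single extension lying in both principal blocks of $A$, and your outline contains no mechanism for this.

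The paper's resolution is to insist that $\theta$ be \emph{rational-valued} (this is why unipotent characters, rational for classical types by Lusztig, are used throughout Lemma~\ref{lem:except} and Proposition~\ref{prop:nondef}). Lemma~\ref{lem:rationalext} then forces each of $\theta_p$ and $\theta_2$ to be rational. Writing $A_1=\wt SC_1\cap A$ with $C_1$ the odd part of $\cent{\aut(S)}{Q}$ as in Theorem~\ref{A:Sp'}(2), one has $A_1/\wt S$ abelian of odd order, so $\theta$ admits exactly one rational extension to $A_1$; this forces $\theta_p|_{A_1}=\theta_2|_{A_1}$. Since $A/A_1$ is a $2$-group, $B_2(A)$ is the only $2$-block above $B_2(A_1)$, and the common extension lifts to $\irr{B_p(A)}\cap\irr{B_2(A)}$ (Proposition~\ref{lietype}). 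This rationality device is the crux of the passage from $S$ to $A$ and is entirely absent from your plan. (A minor further slip: the principal $p$-block of $\sym_n$ consists of the $\chi^\lambda$ whose $p$-core is $(r)$ with $n=mp+r$, $0\le r<p$, not those with empty $p$-core.)
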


 We begin by restating Question \ref{questionalmostsimple}:

\begin{question}\label{almostsimpleLie}
Let $p$ and $q$ be different primes.
Suppose that
 $A$ is almost simple with socle $S$, where $S$ is a simple group with $pq\mid |S|$. 
 Assume 
 that  $A=S\cent A P=S\cent A Q$, for every $P\in{\rm Syl}_p(S)$ and $Q\in{\rm Syl}_q(S)$.
 If $\irrp{p}{B_p(A)} \cap  \irrp{q}{B_q(A)}=1$,
then there is a Sylow $p$-subgroup $P_0$
of $A$ and a Sylow $q$-subgroup $Q_0$ of $A$ such that $xy=yx$ for all $x\in P_0$ and all $y\in Q_0$.
\end{question}

Note the change of notation: we have used $A$, rather than $G$, to denote the almost simple group.  Throughout this section, we find it convenient to use $G$ to denote a quasisimple group satisfying $S=G/\zent{G}$, and hence we will use the notation $A$ for the almost simple group under consideration, rather than $G$.
When $A\neq S$, the following will be useful, which comes from \cite[Theorem A]{Gross82} and \cite[Theorem 10]{Glauberman}.
 \begin{thm}\label{A:Sp'}

 Let $p$ be a prime, and suppose $S$ is a finite nonabelian simple group with order divisible by $p$ and let $P\in \syl{p}{S}$. Write $C:=\cent{\aut(S)}{P}$.
 \begin{enumerate}
 \item (Gross) If $p\neq 2$, then $C=C_1\times Z(P)$, where $p\nmid |C_1|$ and we view $P\leq S\leq \aut(S)$ as inner automorphisms. 
 \item (Glauberman) If $p=2$, then $C=C_1C_2$ with $C_1\lhd C$, $2\nmid|C_1|$, and $C_2$ is the direct product of $Z(P)$ and an elementary abelian $2$-group.
 \end{enumerate}
\end{thm}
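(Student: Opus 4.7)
The plan is to reduce both parts of the statement to an identification, via the CFSG, of a Sylow $p$-subgroup of $C$ together with a normal $p$-complement, and then to assemble the decompositions using Schur--Zassenhaus. First, $Z(P)$ is contained in $C$ through the inclusion $Z(P)\le P\le S\le \aut(S)$, and every element of $C$ centralizes $P$ and hence $Z(P)$, so $Z(P)\le Z(C)$; in particular $Z(P)$ is a central, normal $p$-subgroup of $C$. The stated internal product decompositions in (1) and (2) will follow once one produces a normal $p'$-subgroup $C_1$ of $C$ and identifies a Sylow $p$-subgroup of $C$ as $Z(P)$ in case (1) and as $Z(P)$ times an elementary abelian $2$-group in case (2); centrality of $Z(P)$ then converts the semidirect product $C_1\rtimes(\text{Sylow})$ supplied by Schur--Zassenhaus into the precise form claimed.

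The key geometric step is to embed the ``extra'' Sylow $p$-part of $C$ into $\out(S)$. Let $R$ be a Sylow $p$-subgroup of $C$ containing $Z(P)$. Since $R$ centralizes $P$, the product $PR$ is a $p$-subgroup of $\aut(S)$, so $PR\cap S$ is a $p$-subgroup of $S$ containing the Sylow subgroup $P$, forcing $PR\cap S=P$ and hence $R\cap S\le P$. Because $R$ also centralizes $P$, this gives $R\cap S\le Z(P)$, and the reverse inclusion is trivial, yielding $R\cap S=Z(P)$. The natural projection $\aut(S)\to\out(S)$ therefore induces an injection $R/Z(P)\hookrightarrow\out(S)$, and the task is reduced to identifying which $p$-elements of $\out(S)$ can be lifted to automorphisms centralizing a full Sylow $p$-subgroup of $S$.

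The third step is the CFSG case analysis. For $S$ alternating or sporadic, $\out(S)$ has order at most $4$, and inspection verifies the claim directly. For $S$ of Lie type, use Steinberg's description $\out(S)=\mathcal{D}\rtimes(\Phi\times\Gamma)$ by diagonal, field, and graph automorphisms; after choosing $P$ adapted to a maximal torus (when $p$ is non-defining) or to a Borel subgroup (when $p$ is defining), apply Lang--Steinberg to decide which classes in $\out(S)$ can be represented by elements centralizing $P$. When $p$ is odd, one checks that no nontrivial $p$-element of $\out(S)$ centralizes any Sylow $p$-subgroup of $S$; this forces $R=Z(P)$, which is central in $C$, and Schur--Zassenhaus produces a complement $C_1$ whose forced commutation with $Z(P)$ gives the direct product of case (1). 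When $p=2$, involutory field and graph automorphisms can centralize well-chosen Sylow $2$-subgroups, but only inside an elementary abelian subgroup of $\out(S)$, and a further argument shows the corresponding lifts assemble into a normal $2$-complement $C_1$ of $C$ with complementary Sylow $C_2=Z(P)\times E$.

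The main obstacle is the Lie-type portion of the $p=2$ case: one must simultaneously realize the elementary abelian factor $E$ by exhibiting sufficiently many commuting involutory graph and field automorphisms centralizing $P$, exclude any element of order $4$ in $\out(S)$ from centralizing a Sylow $2$-subgroup of $S$, and verify that the resulting $2$-complement is actually normal in $C$. This requires delicate Lang--Steinberg calculations across the classical, exceptional, and twisted families, and is the technical heart of \cite{Glauberman}.
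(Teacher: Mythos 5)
The paper does not prove this statement at all: it is quoted verbatim from the literature, namely \cite[Theorem A]{Gross82} for odd $p$ and \cite[Theorem 10]{Glauberman} for $p=2$, so there is no internal argument to compare yours against. Your preliminary reductions are correct and standard: $\zent{P}\le \zent{C}$, the observation that a Sylow $p$-subgroup $R$ of $C$ satisfies $R\cap S=\zent{P}$ (via $\langle R\cap S\rangle P$ being a $p$-subgroup of $S$ containing $P$), the induced embedding $R/\zent{P}\hookrightarrow \out(S)$, and the assembly of the direct/central product once the Sylow $p$-structure of $C$ is known (for odd $p$, centrality of $\zent{P}=R$ plus Burnside/Schur--Zassenhaus indeed gives $C=C_1\times \zent{P}$ with $p\nmid|C_1|$).

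The genuine gap is that everything after the reduction is precisely the content of the two cited theorems, and you do not supply it: the claim that for odd $p$ no nontrivial $p$-element of $\out(S)$ (including diagonal outer automorphisms, e.g.\ for $\PSL_n^\epsilon$ with $p\mid\gcd(n,q-\epsilon)$) lifts to an automorphism centralizing a Sylow $p$-subgroup is exactly Gross's CFSG-based theorem, and is only asserted ("one checks''); likewise the $p=2$ statements (order-$4$ outer elements excluded, the elementary abelian factor, and above all the existence and normality of the odd-order complement $C_1$, which does not follow from Schur--Zassenhaus alone since $C_2$ need not be central in $C$) are deferred to "a further argument.'' Moreover, your attribution of "delicate Lang--Steinberg calculations across the classical, exceptional, and twisted families'' to \cite{Glauberman} misdescribes that reference: Glauberman's 1968 theorem is classification-free, proved by weak-closure/transfer methods for arbitrary finite groups, not by a CFSG case analysis; a CFSG-based route for $p=2$ would be a new (and heavier) argument whose hardest points you have not carried out. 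As written, the proposal is an outline that restates the theorem as the outcome of an unexecuted case analysis; citing \cite{Gross82} and \cite{Glauberman}, as the paper does, is the appropriate resolution.
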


\begin{prop}\label{prop:sporadicexceptschur}
Question \ref{almostsimpleLie} holds (for any pair of primes) when $S$ is an alternating group $\alt_n$ with $n\leq 7$, a sporadic group, a group of Lie type with exceptional Schur multiplier, or the Tits group $\tw{2}\type{F}_4(2)'$.
\end{prop}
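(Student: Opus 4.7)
The plan is to proceed by direct verification, aided by the GAP Character Table Library. In each of the listed cases, $S$ is a finite nonabelian simple group of bounded order whose complete ordinary character table, Brauer tables, Schur covers, and automorphism group data are all available. Since $\out(S)$ is small and explicitly known, the collection of almost simple groups $A$ with socle $S$ is finite and enumerable, and only finitely many pairs of distinct primes $(p,q)$ with $pq\mid |S|$ need to be considered. I would verify the conclusion of Question \ref{almostsimpleLie} directly for each such triple $(A,p,q)$.

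For each triple, I would proceed in three steps. First, check whether the structural hypothesis $A=S\cent{A}{P}=S\cent{A}{Q}$ holds; by Theorem \ref{A:Sp'} this sharply restricts the possibilities for $A/S$, in many cases forcing $A=S$, and otherwise placing strong constraints on $A/S$ (for instance, when one of the primes is $2$, forcing a factor of $A/S$ to centralise a Sylow $2$-subgroup of $S$ up to its centre). If the hypothesis fails, the triple is discarded. Second, assuming the hypothesis holds, test whether $S$ (equivalently $A$) contains a Sylow $p$-subgroup $P_0$ and a Sylow $q$-subgroup $Q_0$ with $[P_0,Q_0]=1$; this can be read off from the structure of $\cent{S}{P_0}$ for some $P_0\in\syl{p}{S}$. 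If commuting Sylows exist in $S$, then under $A=S\cent{A}{P_0}$ they extend to commuting Sylow subgroups of $A$, and the desired conclusion holds. Third, if no commuting Sylows exist, I would exhibit a nontrivial character $\chi\in\irrp{p}{B_p(A)}\cap\irrp{q}{B_q(A)}$, which refutes the intersection hypothesis and so settles this triple. To locate such a $\chi$, enumerate the irreducible characters of $A$ of degree coprime to $pq$ and apply the central-character test (or equivalently consult the known block distribution of $\irr{A}$ at $p$ and at $q$) to pick out those lying in both principal blocks.

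The main obstacle I expect is the sheer volume of case-by-case bookkeeping: the list comprises $\alt_5,\alt_6,\alt_7$, the $26$ sporadic simple groups, the short list of Lie type groups with exceptional Schur multiplier, and $\tw{2}\type{F}_4(2)'$, each possibly supporting several almost simple extensions and several candidate prime pairs. However, each individual case reduces to a finite lookup in the relevant character table, so the verification is routine and is best organised as a tabulation rather than a running argument. The genuinely substantive point, to be confirmed case by case, is that whenever commuting Sylows fail to exist in one of these groups, a nontrivial character of degree coprime to $pq$ lying in both principal blocks can always be found; the finiteness of the list makes this feasible.
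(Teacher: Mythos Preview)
Your proposal is correct and takes essentially the same approach as the paper, which also proves this by direct computation in GAP using the Character Table Library. The paper gives no further detail beyond that, so your three-step plan is in fact more explicit than the published argument.
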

\begin{proof}
This can be seen using GAP and the GAP Character Table Library. Note that the groups with exceptional Schur multiplier can be found, e.g., in \cite[Table 6.1.3]{GLS3}.
\end{proof}

We also note that, in the context of Question \ref{almostsimpleLie}, we will often write $\pi:=\{p,q\}$ for the pair of primes under consideration. 

\subsection{Groups of Lie type}
Let $\bG$ be a connected reductive group defined over $\overline{\FF}_\ell$, where $\ell$ is some prime.  Let $F\colon \bG\rightarrow\bG$ be a Steinberg morphism endowing $\bG$ with an $\FF_{\ell^f}$-rational structure, for some power $\ell^f$ of $\ell$.  Then by a group of Lie type, we mean a finite group $G=\bG^F$, the set of fixed points of $\bG$ under $F$.  In this situation, we say that $G$ is defined in characteristic $\ell$.  

In particular, if $\bG$ is simple of simply connected type, then $G=\bG^F$ is  quasi-simple and $S=G/\zent{G}$ is a simple group of Lie type. In this case, we may let $\iota\colon\bG\hookrightarrow \wt{\bG}$ be a regular embedding as in \cite[Proposition 1.7.5]{GM20}  or \cite[(15.1)]{CE04}, 
and write $\wt{G}:=\wt{\bG}^F$.  Then $\zent{\wt{\bG}}$ is connected, $G\lhd \wt{G}$ with $\wt{G}/G$ abelian, and $S\cong G\zent{\wt{G}}/\zent{\wt{G}}$.   Write $\wt{S}:=\wt{G}/\zent{\wt{G}}$.  Then $\wt{S}/S\cong \wt{G}/G$ induces the group of diagonal automorphisms on $S$.  For a convenient choice $D$ of graph and field automorphisms, we have $\aut(S)=\wt{S}\rtimes D$.

The sets $\Irr(G)$ and $\Irr(\wt{G})$ are partitioned into rational Lusztig series $\mathcal{E}(G, s)$, resp. $\mathcal{E}(\wt{G}, s)$, indexed by $G^\ast$ (resp. $\wt{G}^\ast$)-conjugacy classes of semisimple elements $s$.   Here $G^\ast$ and $\wt{G}^\ast$ are the fixed points under $F$ of dual groups $\bG^\ast$ and $ \wt{\bG}^\ast$ of $\bG$ and $\wt{\bG}$, respectively.  The map $\iota$ further induces a surjection $\iota^\ast\colon \wt{\bG}^\ast\rightarrow\bG^\ast$.

In particular, $\mathcal{E}(\wt{G}, s)$ contains a unique so-called semisimple character $\chi_s$, whose degree is $[\wt{G}^\ast:\cent{\wt{G}^\ast}{s}]_{\ell'}$. Semisimple characters of $G$ are then the elements of $\Irr(G|\chi_s)$ for the semisimple characters $\chi_s\in\Irr(\wt{G})$  (see \cite[Corollary 2.6.18]{GM20}).
On the other hand, unipotent characters are those in the series corresponding to $s=1$.  The unipotent characters of $\wt{G}$ are trivial on $\zent{\wt{G}}$ and restrict irreducibly to $G$. (See, e.g. \cite[2.3.16 and 4.2.1]{GM20}.)

We begin with the case that the defining characteristic $\ell$ is one of the members of $\pi=\{p,q\}$ in the situation of Question \ref{almostsimpleLie}.

\begin{lem}\label{definingchar}
Let $p$ and $q$ be different primes and let $A$ be as in Question \ref{almostsimpleLie}. Assume that $\mathrm{soc}(A)=S=G/\zent{G}$, where $G=\mathbf{G}^F$ is a group of Lie type with $\mathbf{G}$ of simply connected type defined in characteristic $p$ and that $S$ is not isomorphic to one of the groups considered in Proposition \ref{prop:sporadicexceptschur}.  Then $\irrp{p}{B_p(A)} \cap  \irrp{q}{B_q(A)}\neq1$.
\end{lem}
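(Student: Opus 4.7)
The plan is to exhibit a nontrivial character of $A$ lying in $\irrp{p}{B_p(A)}\cap\irrp{q}{B_q(A)}$ by constructing a suitable semisimple character of $G=\mathbf{G}^F$ and then descending/lifting it. The key input on the $p$-block side is Humphreys' theorem on $p$-blocks of finite reductive groups in defining characteristic: for $G$ with $\mathbf{G}$ simply connected, every $\chi\in\irr G$ satisfying $p\nmid\chi(1)$ and $\zent G\leq{\rm Ker}(\chi)$ lies in the principal $p$-block $B_p(G)$. As an immediate consequence, every irreducible character of $S=G/\zent G$ of $p'$-degree lies in $B_p(S)$, and the problem reduces to exhibiting a non-trivial character of $S$ of degree coprime to $pq$ that lies in $B_q(S)$ and extends suitably to $A$.

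To construct such a character, take a non-trivial semisimple $q$-element $s$ in $\zent{Q^\ast}$ for some $Q^\ast\in\syl{q}{G^\ast}$, which exists as $q\mid|G^\ast|=|G|$ and the center of a nontrivial $q$-group is nontrivial. The semisimple character $\chi_s\in\mathcal{E}(\wt G,s)$ has degree $[\wt G^\ast:\cent{\wt G^\ast}{s}]_{p'}$, which is coprime to $p$ by construction and coprime to $q$ because $s$ is central in $Q^\ast$. By the theorem of Brou\'e--Michel, since $s$ is a $q$-element, $\chi_s$ lies in the principal $q$-block $B_q(\wt G)$. Choosing $s$ in the image of $\iota^\ast$ so that the restriction $\chi_s|_G$ has an irreducible constituent trivial on $\zent G$ then yields a non-trivial character $\psi\in\irrp{p}{B_p(S)}\cap\irrp{q}{B_q(S)}$.

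Finally, lift $\psi$ to $A$. The hypothesis $A=S\cent A P=S\cent A Q$ together with Theorem \ref{A:Sp'} forces $A/S$ to be a $\{p,q\}'$-group, so Theorem \ref{isomblocks} applied to each prime provides restriction bijections $\irr{B_p(A)}\to\irr{B_p(S)}$ and $\irr{B_q(A)}\to\irr{B_q(S)}$. Following the authors' own remark in the introduction, the main obstacle here is not the mere existence of characters lying in both principal blocks, which is often apparent from Brou\'e--Michel plus Humphreys as above, but rather the verification that some such character can be found with degree prime to both $p$ and $q$; moreover, as the paper highlights, the two canonical extensions $\widehat{\psi}_p$ and $\widehat{\psi}_q$ of $\psi$ to $A$ may a priori differ, and one must show that some $\{p,q\}'$-degree extension lies simultaneously in $B_p(A)$ and $B_q(A)$. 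Settling both points requires a type-by-type analysis of the centralizers of $q$-elements in $G^\ast$, of the Sylow $q$-structure, and of the action of the graph and field automorphisms in $A/S$ on the Lusztig series $\mathcal{E}(G,s)$, carried out for each family of simple groups of Lie type not already covered by Proposition \ref{prop:sporadicexceptschur}.
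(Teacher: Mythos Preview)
Your general strategy for $A=S$ is essentially the paper's: Humphreys' theorem gives $\irrp{p}{B_p(S)}=\irrp{p}{S}$, and one then exhibits a non-trivial semisimple character $\chi_t$ of $G$ (trivial on $\zent G$, of $\{p,q\}'$-degree, with $t$ a $q$-element) lying in $B_q(S)$. The paper simply cites \cite[Theorem 3.5]{GSV19} for the existence of such $\chi_t$ and \cite[Corollary 3.3]{Hiss90} for its membership in $B_q$, whereas your description of the construction is somewhat loose (you mix $G^\ast$ and $\wt G^\ast$, and ``choosing $s$ in the image of $\iota^\ast$'' does not by itself guarantee a constituent trivial on $\zent G$); but this can be repaired.

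The genuine gap is in your treatment of $A\neq S$. Your claim that Theorem~\ref{A:Sp'} forces $A/S$ to be a $\{p,q\}'$-group is false when the defining characteristic $p$ equals $2$: part~(2) of that theorem explicitly allows $\cent{\aut(S)}{P}$ to contain an elementary abelian $2$-group beyond $Z(P)$, so $|A/S|$ may well be even. Consequently Theorem~\ref{isomblocks} cannot be applied for the prime $2$, and your restriction bijection $\irr{B_2(A)}\to\irr{B_2(S)}$ does not exist. In fact, the paper shows (via \cite[Lemma 2.2]{BZ} combined with Theorem~\ref{A:Sp'}) that $A\neq S$ forces $p=2$ and $A/S$ to be a non-trivial elementary abelian $2$-group; so the case you dismiss is precisely the one that must be handled.

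The paper's argument for this case is quite different from what you sketch. Since $A/S$ is a $2$-group, $B_2(A)$ is the unique block above $B_2(S)$, so the problem reduces to showing that the extension $\hat\chi\in B_q(A)$ of $\chi_t$ has odd degree, i.e.\ that $\chi_t$ is $A$-invariant and hence extends (as $A/S$ is abelian). This is done by exploiting that the generators of $A/S$ are graph and field automorphisms centralising a Sylow $q$-subgroup of $S$, so they fix the $G^\ast$-class of $t$; one then invokes \cite[Corollary 2.5]{NTT08} and, for $\SL_n^\epsilon$ and $E_6^\epsilon$, an argument with the Alvis--Curtis dual of the Gelfand--Graev character from \cite[Proposition 3.4]{spath12}. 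No type-by-type analysis of centralisers or Sylow structure is required; your final paragraph, which defers everything to such an analysis, is not a proof but a statement that a proof remains to be given.
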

\begin{proof}
First consider the case $A=S$. Let $G\lhd \wt{G}$ via a regular embedding as above.
By a result originally of Dagger and Humphreys, see \cite[Proposition 1.18 and Theorem 3.3]{Cab18}, $\irr{B_p(S)}$ contains all irreducible characters of $S$ except the Steinberg character, and hence $\irrp{p}{B_p(S)}=\irrp{p}{S}$.  In particular, all semisimple characters of $G$ trivial on the center may be viewed as members of $\irrp{p}{B_p(S)}$.  
  Now, in the proof of \cite[Theorem 3.5]{GSV19}, it is shown that there exists a semisimple character $\chi_t$ of $G$ that is trivial on the center and has $q'$ degree (and $p'$ degree since it is semisimple) 
  and such that $t\in G^\ast$ has $q$-power order.

   Using \cite[Corollary 3.3]{Hiss90}, 
   we see every semisimple character $\wt{\chi}_{\wt{t}}$ of $\wt{G}$ for $\wt{t}\in \wt{G}^\ast$ of $q$-power order lies in $B_q(\wt{G})$.  
   In particular, this implies that every semisimple character of $G$ in a series $\mathcal{E}(G, t)$ for $t$ of $q$-power order also lies in $B_q(G)$. That is, our $\chi_t$ lies in $B_q(S)$. (Indeed, if $t=\iota^\ast(xy)$ with $x\in \wt{G}^\ast$ a $q$-element and $y\in \wt{G}^\ast$ a $q'$-element, note that $y^{|t|}$, and hence $y$, is in $\ker{\iota^\ast}$, since $y$ and $y^{|t|}$ generate the same cyclic subgroup.  Hence, $t=\iota^\ast(\wt{t})$ for $\wt{t}:=x$.)

  Now, assume that $A\neq S$ but that $A=S\cent A P=S\cent A Q$, for every $P\in{\rm Syl}_p(S)$ and $Q\in{\rm Syl}_q(S)$.  
  Then by \cite[Lemma 2.2]{BZ}, we have $|A/S|$ is a power of $p$.  By Theorem \ref{A:Sp'}, this forces $p=2$ to be the defining characteristic and   $A/S$ is an elementary abelian $2$-group.  Note then that  there is a unique $2$-block of $A$ (namely, $B_2(A)$) lying above $B_2(S)$. 
   Let $\chi=\chi_t\in \irrp{2}{B_2(A)} \cap  \irrp{q}{B_q(A)}$ from the previous part.  Then there exists a character $\hat \chi$ in $B_q(A)$ lying above $\chi$, which necessarily is still of $q'$-degree since $|A/S|$ is $q'$. 
   Then $\hat\chi$ lies in $B_q(A)$ and $B_2(A)$, and it now suffices to prove that it has odd degree. 
   
    Note that $A/S$ is generated by graph and field automorphisms (since $\wt{S}/S$ is relatively prime to the defining characteristic) centralizing any Sylow $q$-subgroup $Q$ of $S$ and any Sylow $2$-subgroup $P$ of $S$.  That is, for $\alpha\in A\setminus S$, we have $h^\alpha$ is $S$-conjugate to $h$ for any $2$- or $q$- element $h\in S$.  
    
    First assume that $G\not\in\{\SL_n^\epsilon(2^a), \type{E}_6^\epsilon(2^a)\}$.  Then  $\zent{G}$ is trivial, yielding $G=S$, and further $\wt{G}=G\cong G^\ast$.  Then  $\chi_t^\alpha=\chi_{\alpha^\ast(t)}$ by \cite[Corollary 2.5]{NTT08}, where $\alpha^\ast$ is the corresponding automorphism in $G^\ast\cong G$.  But further, $\alpha^\ast$ centralizes a Sylow $q$-subgroup of $G^\ast$, so $\chi_t^\alpha=\chi_t$ and $\chi=\chi_t$ is $A$-invariant. Then by \cite[Proposition 3.4]{spath12}, $\chi$ extends to $A$, and hence every character of $A$ above $\chi$ is an extension since $A/S$ is abelian.  In particular, $\hat\chi$ is an extension and therefore is of odd degree, as desired.
    
    Now let $G=\type{E}_6^\epsilon(2^a)$ or $G=\SL_n^\epsilon(2^a)$.
Let $\alpha\in A\setminus S$. Now, note that $t$ was constructed to lie in  $[G^\ast, G^\ast]$, so $t$ lies in a Sylow $q$-subgroup of $[G^\ast, G^\ast]\cong S$.  
Then $t^{\alpha^\ast}$ is $G^\ast$-conjugate to $t$.  In particular, $\wt{t}^{\alpha^\ast}$ is $\wt{G}^\ast$-conjugate to $\wt{t}z$ for some $z\in\zent{\wt{G}^\ast}$, where $\wt{t}\in\wt{G}^\ast$ is such that $\iota^\ast(\wt{t})=t$.  
Hence by \cite[Corollary 2.5]{NTT08}, we have $\wt{\chi}_{\wt{t}}^{\alpha}=\wt{\chi}_{\wt{t}z}$, which is another character lying above $\chi_t$, as $\wt{\chi}_{\wt{t}z}=\wt{\chi}_{\wt{t}}\hat z$ for some $\hat z\in \Irr(\wt{G}/G)$ (see \cite[Proposition 2.5.21]{GM20}).  
This implies $\chi_t^\alpha$ is a $\wt{G}$-conjugate of $\chi_t$ for any $\alpha\in A$.

Now, by \cite[Proposition 3.4]{spath12}, there is a character $\chi_0\in\Irr(G|\wt{\chi}_{\wt{t}})$ that extends to $D_{\chi_0}$.  But following the proof there, this $\chi_0$ is the unique character under $\wt{\chi}_{\wt{t}}$ that has multiplicity $\pm1$ in the $D$-invariant virtual character $D_G(\Gamma)$. (Here $D_G(\Gamma)$ is the Alvis--Curtis dual of the Gelfand--Graev character.)  Then $\chi_0^\alpha = \chi_0$ for each $\alpha\in A$, since $\chi_0^\alpha$ must also be multiplicity $\pm1$ in $D_G(\Gamma)$ and, as above, must be a constituent of $\wt{\chi}_{\wt{t}}$ when restricted to $G$.  Hence we see that without loss, we may further assume $\chi_0=\chi_t$ extends to $A$, so $\hat\chi$ is also an extension, since $A/S$ is abelian, completing the proof.
\end{proof}

Note that since characters of $B_q(G)$ and $B_p(G)$ lie in Lusztig series $\mathcal{E}(G, t)$ with $t\in G^\ast$ semisimple $p$-elements, respectively $q$-elements (see \cite[Theorem 9.12]{CE04}), we know that $\irr{B_p(G)} \cap  \irr{B_q(G)}$ contains only unipotent characters.  Further, recall that unipotent characters of $G$ are trivial on $\zent{G}$ and extend to unipotent characters of $\wt{G}$.  Hence for the case $A=S$ in Question \ref{almostsimpleLie}, it suffices to show that there is a unipotent character in $\irr{B_p(\wt{G})} \cap  \irr{B_q(\wt{G})}$ with degree prime to $\{p,q\}$.

For a number $m$ and prime $p$, we write $d_p(m)$ for the order of $m$ modulo $p$ if $p$ is odd, or the order of $m$ modulo $4$ if $p=2$. We next show that Question \ref{almostsimpleLie} holds when $A=S$ is a group of exceptional type in non-defining characteristic.

\begin{lem}\label{lem:except}
Let $S$ be a simple group of exceptional Lie type (including Suzuki and Ree types) defined in characteristic $\ell$, such that $S$ is not isomorphic to one of the groups in Proposition \ref{prop:sporadicexceptschur}. Let $\pi:=\{p,q\}$ for two primes $p, q$ such that $\ell\not\in\pi$.
Then there exists a rational-valued unipotent character lying in $\irr{B_{p}(\wt{S})} \cap  \irr{B_{q}(\wt{S})}$ with $\pi'$-degree that extends to $\aut(S)$. 
\end{lem}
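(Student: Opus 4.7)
The plan is to combine the $d$-Harish-Chandra theoretic description of unipotent characters in principal blocks with the explicit enumeration of unipotent characters for exceptional Lie types. Writing $q_0$ for the size of the defining field and setting $d:=d_p(q_0)$, $e:=d_q(q_0)$, one has, by work of Cabanes-Enguehard (see \cite[Theorem 22.9]{CE04}), that for each good prime $r\in\{p,q\}$ with $r\nmid q_0$, a unipotent character of $\wt{S}$ lies in $B_r(\wt{S})$ exactly when it lies in the principal $d_r(q_0)$-Harish-Chandra series. For the small list of bad primes this description requires some augmentation, but the principal $d$-HC series is still contained in the principal $r$-block. Thus the problem reduces to exhibiting a non-trivial unipotent character $\chi$ that lies in both the principal $d$- and $e$-Harish-Chandra series.

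Given this reduction, the second step is to verify the remaining properties: that $\chi$ has degree coprime to $pq$, is rational-valued, and extends to $\aut(S)$. For each of the exceptional types, namely $G_2$, $F_4$, $E_6$, $\tw{2}E_6$, $E_7$, $E_8$, $\tw{3}D_4$, $\tw{2}B_2$, $\tw{2}G_2$ and $\tw{2}F_4$, the number of unipotent characters is small and all relevant data (degrees as products of cyclotomic polynomials in $q_0$, Harish-Chandra series memberships, fields of values, and extension behavior under automorphisms) is completely tabulated, for instance in Carter's book or in the CHEVIE package. Rationality is the norm for the exceptional types, with only a short explicit list of exceptions in $E_7$, $E_8$, and the Suzuki and Ree groups; and extendibility of $\aut(S)$-invariant unipotent characters to $\aut(S)$ is by now well documented by work of Malle and of Sp\"ath. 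Hence the argument reduces to a finite case check: for each exceptional type, and for each pair $(d,e)$ that can arise as $(d_p(q_0),d_q(q_0))$ for some pair of primes $p,q$ dividing $|S|$, we select from the tables an appropriate $\chi$.

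The coprimality condition $pq\nmid\chi(1)$ is tractable because $\chi(1)$ factors as a product of cyclotomic polynomials $\Phi_i(q_0)$ (up to a power of $q_0$), and $p\mid\Phi_i(q_0)$ precisely when $i=p^a d$ for some $a\ge 0$, and similarly for $q$ and $e$. So one aims for a $\chi$ whose degree polynomial involves only cyclotomic indices outside of these two arithmetic progressions. The main obstacle in executing this plan is the careful treatment of the bad primes (principally $p,q\in\{2,3\}$, and additionally $5$ for $E_8$), where the clean Cabanes-Enguehard description must be supplemented by direct block-theoretic analysis, together with the rationality questions for the Suzuki and Ree groups, whose unipotent characters can take values in $\Q(\sqrt{2})$ or $\Q(\sqrt{3})$. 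In each of these situations, however, inspection of the tables reveals a suitable rational candidate in the required Harish-Chandra series, though the resulting case work is somewhat lengthy.
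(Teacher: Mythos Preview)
Your approach is correct in principle and leads to the same destination, but the paper takes a considerably more economical route. Rather than treating all ten exceptional types on an equal footing via the $d$-Harish-Chandra framework and then performing the full case check over pairs $(d,e)$, the paper first observes that the Steinberg character is rational, extends to $\aut(S)$, and has $\pi'$-degree (being a power of $\ell$); invoking Hiss's theorem on when the Steinberg character lies in all non-defining principal blocks immediately disposes of every type except $E_6$, $\tw{2}E_6$, $E_7$, $E_8$. A second reduction then observes that if both $d_p(q_0)$ and $d_q(q_0)$ are \emph{regular} numbers for the Weyl group, every $r'$-degree character already lies in $B_r(\wt{S})$, so the Steinberg character again suffices. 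Only when one of the $d_i$ is non-regular does the paper resort to the explicit tables (of \cite{BMM93} and \cite{carter}), and the rationality question is handled via Geck's result \cite{geck03} linking $\Q(\chi)$ to the cuspidal support, leaving a very short list of potentially non-rational exceptions to avoid. So your plan would work, and your identification of the obstacles (bad primes, Suzuki--Ree rationality) is accurate, but you are signing up for substantially more case work than is necessary: the Steinberg/Hiss shortcut and the regular-number argument are worth incorporating.
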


\begin{proof}
For notational convenience, write $p_1:=p$ and $p_2:=q$ and define $d_i:=d_{p_i}(\ell^f)$ for $i=1,2$.  Since the Steinberg character is rational-valued (see \cite{lusztig02}), has degree a power of $\ell$, and extends to $\aut(S)$, we may assume that it does not lie in $\irr{B_{p_1}(\wt{S})} \cap  \irr{B_{p_2}(\wt{S})}$.  By \cite[Main Theorem]{Hiss08} and Proposition \ref{prop:sporadicexceptschur}, we may therefore assume that $S$ is of type  $\tw{2}\type{E}_6$, $\type{E}_6$, $\type{E}_7$, or $\type{E}_8$, and by \cite[Theorems 2.4-2.5]{Malle08}, the unipotent characters we find in $\irr{B_{p_1}(\wt{S})} \cap  \irr{B_{p_2}(\wt{S})}$ will extend to $\aut(S)$.

 Now, if both $d_1$ and $d_2$ %lie in $\{1,2\}$ or 
are regular numbers, then every $p_i'$-degree character lies in $B_{p_i}(\wt{S})$ for $i=1,2$ (see, e.g., the proof of \cite[Lemma 3.6]{RSV21}). In this case, the Steinberg character again satisfies the statement. Hence we may assume that $d_2>2$ is non-regular.  Then we will appeal to the explicit list of unipotent character degrees in \cite[Section 13.9]{carter}, together with the distribution of unipotent characters into blocks found in \cite[Tables 1 \& 2]{BMM93} for non-regular $d_2$.

By \cite[Proposition 5.6]{geck03}, any unipotent character $\chi$ of $\wt{G}$ satisfies $\Q(\chi)=\Q(\psi)$, where $\chi$ lies in the Harish-Chandra series indexed by $(L, \psi)$ for $L$ a split Levi subgroup of $\wt{G}$ and $\psi$ a cuspidal unipotent character of $L$, with some exceptions for $\type{E}_7$ and $\type{E}_8$.  Hence (except for these exceptions), if $\chi$ lies in the principal series or $L$ is of classical type, then it is rational since unipotent characters of classical groups are rational-valued (see \cite{lusztig02}).  By \cite[Tables 1 \& 2]{BMM93}, we see that the only potential non-rational choices then, are in one of the following cases, with notation of \cite{carter}:

\begin{center}
\tiny
\begin{tabular}{|c|c|c|}
\hline
Type of $G$ & $d_2$ & $\chi$\\
\hline
 $\type{E}_7$ & $12$ & $E_6[\theta],\epsilon$ or $E_6[\theta^2],{\epsilon}$ \\
 \hline  
  $\type{E}_8$ & $7$ & $\phi_{4096,11}$ or $\phi_{4096, 12}$ \\
  \hline
  $\type{E}_8$ & $9$ & $\phi_{4096,26}$ or $\phi_{4096, 27}$ \\
  \hline
  $\type{E}_8$ & $9$ & $E_6[\theta],\phi_{1,0}$ or $E_6[\theta^2],\phi_{1,0}$\\
   \hline
   $\type{E}_8$ & $9$ & $E_6[\theta],\phi_{1,3}''$ or $E_6[\theta^2],\phi_{1,3}''$\\
   \hline
   $\type{E}_8$ & $14$ & $E_7[\xi],1$ or $E_7[-\xi],1$\\
   \hline
   $\type{E}_8$ & $18$ & $E_7[\xi],\epsilon$ or $E_7[-\xi],\epsilon$\\
   \hline
   $\type{E}_8$ & $18$ & $E_6[\theta],\phi_{1,0}$ or $E_6[\theta^2],\phi_{1,0}$\\
   \hline
  $\type{E}_8$ & $18$ & $E_6[\theta],\phi_{1,3}''$ or $E_6[\theta^2],\phi_{1,3}''$\\
 \hline
  \end{tabular} 
  \end{center}
  
  \normalsize

Now, using \cite[Section 13.9]{carter} and \cite[Tables 1 \& 2]{BMM93}, we see that
if $d_1$ is also non-regular, then there is a $\pi'$-degree unipotent character in $\irr{B_{p_1}(\wt{S})} \cap  \irr{B_{p_2}(\wt{S})}$ that is not one of the exceptions to rationality in the above table.  Similarly, we see from these sources that if $d_1$ is regular, there is a $\pi'$-degree unipotent character in $\irr{B_{p_2}(\wt{S})}$, and hence in $\irr{B_{p_1}(\wt{S})} \cap  \irr{B_{p_2}(\wt{S})}$, that again does not appear in the above table. 
\end{proof}

We next complete the case $A=S$ in the context of Question \ref{almostsimpleLie} when $S$ is a group of Lie type with $\ell\not\in\pi$ and $2\in\pi$.  To ease our notation, for a fixed power $\ell^f$ of a prime $\ell$, we will define:
\[\Psi_{k}:=(\ell^f)^k-1 \quad\quad\hbox{ and }\quad\quad \Psi_{k}':=(\ell^f)^k+1,\]  so that $\Psi_{2k}=\Psi_k\Psi_k'$.  For any integer $n$ and any prime $p$, we write $(n)_p$  for the $p$-part of the integer.

\begin{prop}\label{prop:nondef}
Let $p\neq \ell$ be distinct odd primes and let $S$ be a simple group of Lie type defined in characteristic $\ell$ that is not isomorphic to one of the groups addressed in Proposition \ref{prop:sporadicexceptschur}.  Write $\pi:=\{2, p\}$.  
Then there exists a rational-valued unipotent character $\chi$ lying in $\irr{B_{2}(\wt{S})} \cap  \irr{B_{p}(\wt{S})}$ with $\pi'$-degree that extends to $\aut(S)$.
\end{prop}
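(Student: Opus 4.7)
The plan is to reduce the remaining cases of Proposition \ref{prop:nondef}, after Lemma \ref{lem:except}, to the classical types, and to exhibit an explicit nontrivial unipotent character in each case. Since all unipotent characters of classical-type groups are rational-valued by Lusztig, the rationality condition will be automatic. Extensibility to $\aut(S)$ will follow from the criteria of \cite[Theorems 2.4-2.5]{Malle08} applied to the specific characters we produce (the small-rank exceptions to Malle's criterion are excluded by Proposition \ref{prop:sporadicexceptschur}). Thus the actual work is to locate a nontrivial unipotent character simultaneously in $B_2(\wt{S})$ and $B_p(\wt{S})$ with degree prime to $2p$.

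First I would recall the combinatorial parameterization and block distribution. Unipotent characters of $\wt{S}$ are indexed by partitions of $n$ (for types $A_{n-1}$, $\tw{2}A_{n-1}$) or by symbols of appropriate defect (for types $B_n$, $C_n$, $D_n^{\pm}$). Setting $d:=d_p(\ell^f)$ and $e:=d_2(\ell^f)\in\{1,2\}$, the Fong--Srinivasan theorem together with \cite[Tables 1 \& 2]{BMM93} characterizes the unipotent characters in the principal $p$-block as those indexed by partitions/symbols with trivial $d$-core (with $d$ replaced by the appropriate parameter for unitary or $D_n^-$ groups). Moreover, Steinberg's formula and the $q$-hook-length formula give the $p$-part of $\chi_{\Lambda}(1)$ in terms of the $d$-quotient of $\Lambda$; in particular a unipotent character indexed by $\Lambda$ has $p'$-degree precisely when its $d$-core equals the empty symbol in a suitably strong sense (see, e.g., the discussion in \cite{Cab18,Malle08}). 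Hence it is enough to exhibit a single nontrivial partition or symbol $\Lambda$ of the correct shape with trivial $d$-core, trivial $e$-core, and whose associated unipotent degree is coprime to $2p$.

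I would then split into cases according to the type of $S$ and the pair $(d,e)$. For linear and unitary types, one natural candidate is the unipotent character indexed by the hook partition $(n-1,1)$, whose degree is $\Psi_{n-1}\Psi_{n}/\Psi_1$ (up to a power of $\ell$), and I would check that this (or one of its small variants such as $(n-2,2)$ or $(n-2,1,1)$) fits the requirements for most configurations of $(d,e)$. When these simple hook choices fail, I would enlarge the pool by exhibiting partitions with $e$-quotient equal to a hook of the $d$-quotient, which automatically produces both trivial cores simultaneously. For types $B_n$, $C_n$, $D_n^\pm$, the analogous plan is to start from symbols of defect $1$ or $0$ representing small-degree unipotent characters (e.g.\ the reflection character and its natural relatives), and modify them as needed so that both the $d$-core and $e$-core vanish. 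Throughout, the Steinberg character serves as a fallback whenever $d$ and $e$ both divide the appropriate rank, paralleling its use in Lemma~\ref{lem:except}.

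The main obstacle I anticipate is the combinatorial bookkeeping required to cover every admissible pair $(d,e)$ together with every classical type, especially when $n$ is small enough that the space of available partitions/symbols is tight, or when $d$ and $e$ divide $n$ in delicate ways. In such borderline cases one has to combine a finite check (using, for instance, \textsf{CHEVIE} or \textsf{GAP}'s Character Table Library for the smallest ranks already not excluded by Proposition~\ref{prop:sporadicexceptschur}) with generic arguments that dominate for $n$ sufficiently large. A secondary technical point, subsidiary to the main combinatorial one, will be to verify that the character we select is stable under graph and field automorphisms; for the bulk of classical types this is immediate from the rationality and the explicit action of $\aut(S)$ on unipotent characters described by Malle, but the degenerate diagonal automorphism situation for $\PSL_n^\pm$ and $\mathrm{P\Omega}_{2n}^{\pm}$ has to be handled separately.
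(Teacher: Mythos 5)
There is a genuine gap, and it sits exactly where the real work of this proposition lies. Your framework for the prime $2$ is off: for classical groups in odd characteristic one does not select the unipotent characters of $B_2(\wt{S})$ by a ``trivial $d_2$-core'' condition --- by \cite[Theorem 21.14]{CE04} \emph{every} unipotent character already lies in the principal $2$-block, so membership at $2$ is automatic and imposes no combinatorial restriction. Consequently the only block condition is at $p$ (same $e$-core as the trivial character, i.e.\ $e$-core $(r)$ where $n=me+r$ --- not ``trivial $d$-core'' as you write, unless $e\mid n$), and the entire difficulty is a \emph{degree} problem: one must produce, for every $n$ and every congruence configuration, a partition/symbol with the prescribed $e$-core whose unipotent degree is simultaneously odd and prime to $p$. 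Your proposal treats this degree condition only by naming a few candidates (hooks $(n-1,1)$, $(n-2,2)$, $(n-2,1,1)$) and asserting they work ``for most configurations''; they do not in general (their degrees are typically even or divisible by $p$ depending on $e$ and on the $2$-adic shape of $n$), your fallback phrase ``partitions with $e$-quotient equal to a hook of the $d$-quotient'' does not define a construction, and ``finite checks for small rank plus generic arguments for large $n$'' cannot close the argument because the problem is not asymptotic --- a uniform construction valid for all $n$ is needed. Your statement that a unipotent character has $p'$-degree ``precisely when its $d$-core equals the empty symbol in a suitably strong sense'' is also not correct as a criterion.

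The paper's proof supplies precisely the missing mechanism: after reducing to classical types (as you do) it writes the $2$-adic expansion $n=2^{a_1}+\cdots+2^{a_t}$, sets $T:=2^{a_t}+\cdots+2^{a_{t_0}}$ (the top part exceeding $r$) and $a:=n-T+1$, and exhibits explicit partitions such as $(1^{me},r)$, $(1^{me-a},a,r)$, $(1^{me-a},r+1,a-1)$ (and analogous symbols in types $\type{B},\type{C},\type{D},\tw{2}\type{D}$, split into cases I, IIa--c according to how $r$, $a$, $e$, $m$ interact). Oddness of the degree is then forced by the identity $(T+i)_2=(i)_2$ for $1\le i\le n-T$, and the $p$-part is controlled by tracking which $\Psi_k$ in the hook-type degree formula are divisible by $p$. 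Without this (or an equivalent uniform construction handling both the $e$-core condition at $p$ and the $2$-part of the degree simultaneously), your outline remains a plan rather than a proof.
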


In particular, note that Proposition \ref{prop:nondef} tells us that in the situation there, there exists a rational-valued unipotent character lying in $\irrp{2}{B_{2}({S})} \cap  \irrp{p}{B_{p}({S})}$ that extends to $\aut(S)$.
\begin{proof}[Proof of Proposition \ref{prop:nondef}]
From Lemma \ref{lem:except} and again using \cite{Hiss08} to exclude the cases where the Steinberg character lies in all principal blocks for non-defining primes, we see that we are left to consider the case that $S$ is one of the classical groups:
\begin{itemize}
\item $\PSL_n^\epsilon(\ell^f)=\type{A}_{n-1}(\ell^f)$ if $\epsilon=1$ or $\tw{2}\type{A}_{n-1}(\ell^f)$ if $\epsilon=-1$, with $n\geq 5$
\item $\PSp_{2n}(\ell^f)=\type{C}_n(\ell^f)$ with $n\geq 3$
\item $\POmega_{2n+1}(\ell^f)=\type{B}_n(\ell^f)$ with $n\geq 3$
\item $\POmega_{2n}^+(\ell^f)=\type{D}_n(\ell^f)$ with $n\geq 5$
\item $\POmega_{2n}^-(\ell^f)=\tw{2}\type{D}_n(\ell^f)$ with $n\geq 4$.
\end{itemize}
 Note that every unipotent character is rational-valued by \cite{lusztig02} and lies in $B_2(G)$ and $B_2(\wt{G})$, by \cite[Theorem 21.14]{CE04}. We will provide the details for the case of type $\type{A}_{n-1}$ and note that the degree arguments in the remaining cases are similar, although tedious.

\smallskip

\noindent\textbf{Type $\type{A}_{n-1}$ and $\tw{2}\type{A}_{n-1}$: $S=\PSL_n^\epsilon(\ell^f)$, $n\geq 5$}

In this case, we may write $\wt{G}=\GL_n^\epsilon(\ell^f)$ and $G=\SL_n^\epsilon(\ell^f)$.  Let $e:=d_p(\epsilon \ell^f)$ be the order of $\epsilon \ell^f$ modulo $p$.  Unipotent characters of $\wt{G}$ are parameterized by partitions of $n$. For example, $1_{\wt{G}}$ corresponds to the partition $(n)$ and the Steinberg character corresponds to $(1^n)$.  
Their degrees can be found using the formula in \cite[Section 13.8]{carter}, which we will exploit throughout. 

Writing $n=me+r$ for $r<e$, we have the character $\chi^\lambda$  corresponding to the partition $\lambda$ of $n$ lies in the principal $p$-block if and only if its $e$-core is the partition $(r)$, using the results of Fong and Srinivasan (see \cite[Theorem (5D)]{FS82}). 
Further, since we may assume that the Steinberg does not lie in both principal blocks, we may assume that $r\geq 2$. 

We assume that $\epsilon=1$ and show the details in this case, but analogous calculations work in the case $\epsilon=-1$.

%To ease notation, we will write $\Psi_k$ for the polynomial $q^k-1$.  
Note that $\Psi_k=\prod_{d|k} \Phi_d$, where $\Phi_d$ denotes the $d$th cyclotomic polynomial in $\ell^f$.  We also remark that $\Phi_d(\ell^f)$ is divisible by $p$ if and only if $d=p^x d_p(\ell^f)$ for some nonnegative integer $x$ and $p^2$ can only divide $\Phi_d(\ell^f)$  if $d=d_p(\ell^f)$. 
%(See, e.g. \cite[Lemma 5.2]{malle07}.)  
Further, $\Phi_d(\ell^f)$ is divisible by $2$ if and only if $d$ is a power of $2$.  We will use this to determine a unipotent character $\chi^\lambda$ lying in $\Irr_{\pi'}(B_{2}(G)\cap B_{p}(G))$.

Let  $n=2^{a_1}+2^{a_2}+\ldots+2^{a_t}$ with $a_1<a_2<\ldots<a_t$ be the $2$-adic expansion of $n$.  Let $1\leq t_0\leq t$ be the smallest such that $r<2^{a_{t_0}}$, and write $T:=2^{a_t}+2^{a_{t-1}}+\cdots +2^{a_{t_0}}$ and $a:=n-T+1$.  Note that if $t_0\neq 1$, we have $r\geq 2^{t_0-1}$.

\medskip

\emph{Case I:} Suppose first that $r=n-T=a-1$.   
In this case, consider $\lambda= (1^{me}, r)$.  Then $\chi^\lambda$ lies in $B_p(G)$, and the $\pi$-part of $\chi^\lambda(1)$ is the $\pi$-part of 
 
\begin{equation}\label{eq:IIdeg}
\frac{\prod_{k=r}^{n-1}\Psi_k}{\prod_{k=1}^{me}\Psi_k}=\frac{\prod_{k=me+1}^{n-1}\Psi_k}{\prod_{k=1}^{r-1}\Psi_k},
\end{equation}

which is certainly $p'$.  Now, we see this is further odd, since $me=T$ and $(T+i)_2=(i)_2$ for $1\leq i<n-T$.

\emph{Case II:} Now we suppose that $r\neq (a-1)=n-T= 2^{a_1}+\cdots +2^{a_{t_0-1}}$.  Then $T\neq me$ and we have $n-2^{a_{t_0-1}}\geq me>n-2^{a_{t_0}}=2^{a_1}+\cdots+2^{a_{t_0-1}}+2^{a_{t_0+1}}+\cdots+2^{a_t}$.  
Then we have $T-me=2^{a_{t_0}}-2^{a_{t_0-1}}-\cdots - 2^{a_1}-x$ for some $0<x<2^{a_{t_0}}$.  
Then writing $R=2^{a_{t_0}}-T+me$, we see $R< 2^{a_{t_0}+1}$ and hence must satisfy $(R)_2\leq 2^{a_{t_0}}$.  Then we have $(me)_2=(R)_2=|T-me|_2$.   
That is, we see $|T-me|$ has same $2$-part as $me$. We consider the cases $r\geq a$ and $r<a-1$ separately.

\emph{Case IIa:} Here assume that $r\geq a$.  In this case, consider the partition $\lambda=(1^{n-r-a}, a, r)=(1^{me-a}, a, r)$.  The corresponding unipotent character $\chi^\lambda$ lies in $B_{p}(G)$ and the $\pi$-part of $\chi^\lambda(1)$ is the $\pi$-part of 
\begin{equation}\label{eq:firstdeg}
\frac{\Psi_{r-a+1}\prod_{k=n-a+2}^n \Psi_k \prod_{k=n-r-a+1}^{n-a}\Psi_k}{\Psi_{n-r}\prod_{k=1}^{r}\Psi_k\prod_{k=1}^{a-1}\Psi_k}.
\end{equation}

Notice that $me=n-r\leq n-a$ since $a\leq r$.  Then the product $\Psi_{r-a+1}\prod_{k=n-a+2}^n \Psi_k$ is relatively prime to $p$, since no $k$ for which $\Psi_k$ is involved in this product is divisible by $e$.  Further, the $p$-part of $\prod_{k=n-r-a+1}^{n-a}\Psi_k=\prod_{k=me-a+1}^{n-a}\Psi_k$ is the $p$-part of $\Psi_{me}$, which also appears in the denominator, and hence we see $\chi^\lambda(1)$ is relatively prime to $p$.

To see that $\chi^\lambda(1)$ is odd, it is convenient to rewrite \eqref{eq:firstdeg} as 
\[\frac{\Psi_{T-me}\prod_{k=T+1}^n \Psi_k \prod_{k=T-r}^{T-1}\Psi_k}{\Psi_{me}\prod_{k=1}^{r}\Psi_k\prod_{k=1}^{n-T}\Psi_k}.\]

Then $\frac{\prod_{k=T+1}^n \Psi_k}{\prod_{k=1}^{n-T}\Psi_k}$ is odd because the $2$-part of $T+i$ for $1\leq i\leq n-T$ is the same as the $2$-part of $i$.  
Similarly, $\frac{\Psi_{r-n+T} \prod_{k=T-r}^{T-1}\Psi_k}{\Psi_{n-r}\prod_{k=1}^{r}\Psi_k}$ is odd since the $2$-part of $T-i$ is the same as the $2$-part of $i$ for $1\leq i\leq r$ since $r<2^{t_0}$.  (And, recall as discussed above that 
the $2$-part of $r-n+T=T-me$ is the $2$-part of $n-r=me$.)

\smallskip

\emph{Case IIb:} Now suppose that $r<a-1$ and that either $e>a-1$ or $p\nmid (m-1)$. Note that if $e\leq a-1$, we must still have $2e>a-1$. (Indeed, note that $2e>2r\geq 2\cdot 2^{a_{t_0-1}}=2^{a_{t_0-1}+1}>2^{a_1}+\cdots 2^{a_{t_0-1}}=a-1$. )  For similar reasons, note that in this case $2e-r> a-1-r\geq e-r$ but that $r+e\neq a-1$, so $a-r-1\neq e$ and $a-r-1$ cannot be a multiple of $e$.

 Here consider $\lambda=(1^{me-a}, r+1, a-1)$.  Then again $\chi^\lambda$ is in $B_p(G)$, and the $\pi$-part of $\chi^\lambda(1)$ is the $\pi$-part of the same expression as \eqref{eq:firstdeg} with the $\Psi_{r-a+1}=\Psi_{T-me}$ term replaced with $\Psi_{me-T}$.  Hence we again see that $\chi^\lambda(1)\in\Irr_{2'}(G)$.  Further, we see that if $e>a-1$, this is still $p'$ by the same arguments as before.  Now, if $e\leq a-1$, note that the $p$-part of $\chi^\lambda(1)$ comes from $\frac{\Psi_{me}\Psi_{(m-1)e}}{\Psi_{me}\Psi_e}$, since $2e>a-1$, and hence is $p'$ since we have assumed $p\nmid (m-1)$.

\emph{Case IIc:} Now suppose that $r<a-1$, $e\leq a-1$, and $p\mid (m-1)$.  As above, note that $2e>a-1$.  In this case, let $1< s_0\leq t$ be the smallest such that $e+r< 2^{s_0}$ and write $S:=2^{a_t}+\cdots+2^{s_0}$ and $b:=n-S+1$. (Note that our assumptions $r<e<n-T$ force $t_0\leq s_0\leq t_0+1$.)  Here we argue similar to before, taking $\lambda=(1^{me-e}, r+e)$ if $e+r=n-S=b-1$; $\lambda=(1^{n-r-e-b}, b, r+e)$ if $e+r\geq b$; and $\lambda=(1^{n-r-e-b}, r+e+1, b-1)$ if $e+r<b-1$, and arrive at similar degree formulas (with the roles of $\{me, r, a, T\}$ played by $\{me-e, r+e, b, S\}$) that are $p'$ because of our assumption that $p\mid(m-1)$ (and hence $p\nmid m, m-2, m-3$) and are odd for the same reasons as cases I, IIa, IIb, respectively.

\medskip

\medskip

Finally, we remark that the exact same partitions work in the case of $\epsilon=-1$, noting that here $\Psi_k$ can be replaced by $(\ell^f)^k-(-1)^k$ in the degree formulas.  This completes the proof for types $\type{A}$ and $\tw{2}\type{A}$.

\bigskip

\noindent\textbf{Types $\type{B}_n$ and $\type{C}_n$ with $n\geq 3$}

Let $S$ be $\type{B}_n(\ell^f)$ or $\type{C}_n(\ell^f)$ with $n\geq 3$.  
In this case, the unipotent characters of $S$, $G$, or $\wt{G}$ are parametrized by so-called symbols of rank $n$ with odd defect.  A symbol of rank $n$ is a pair of partitions $ {\lambda_1\hbox{ } \lambda_2\hbox{ } \cdots\hbox{ } \lambda_a} \choose {\mu_1\hbox{ } \mu_2\hbox{ } \cdots\hbox{ } \mu_b}$ $ = {\lambda\choose\mu}$, where $\lambda_1<\lambda_2<\cdots<\lambda_a$, $\mu_1<\mu_2<\cdots<\mu_b$, $\lambda_1$ and $\mu_1$ are not both $0$, and $n=\sum_i \lambda_i+\sum_j \mu_j - \lfloor{\left(\frac{a+b-1}{2}\right)^2}\rfloor$.  (The symbol $\lambda\choose\mu$ is equivalent to $\mu\choose\lambda$, and if $\lambda_1$ and $\mu_1$ are both $0$, the symbol is equivalent to $ \lambda_2-1\hbox{ } \cdots\hbox{ } \lambda_a-1 \choose  \mu_2-1\hbox{ } \cdots\hbox{ } \mu_b-1$.)  The \emph{defect} of a symbol is $|b-a|$. Given an integer $e$, an $e$-hook is a pair of non-negative integers $(x, y)$ with $y-x=e$, $x\not\in\lambda$ (resp. $\mu$), and $y\in \lambda$ (resp. $\mu$).  The $e$-core of a symbol is obtained by successively removing $e$-hooks, which means replacing $y$ by $x$ in $\lambda$ (resp. $\mu$) and then replacing the result with an equivalent symbol satisfying that $\lambda_1$ and $\mu_1$ are not both $0$. An $e$-cohook is defined similarly, except that $x\not\in\lambda$ and $y\in\mu$ (or $x\not\in\mu$ and $y\in\lambda$), and the $e$-cocore is obtained by removing $e$-cohooks, which means removing $y$ from $\mu$ and adding $x$ to $\lambda$ (resp. removing $y$ from $\lambda$ and adding $x$ to $\mu$), and again replacing the result with an equivalent symbol satisfying that $\lambda_1$ and $\mu_1$ are not both $0$.

In this case, we let $e$ be the order of $({\ell^f})^2$ modulo $p$.  Then two symbols are in the same $p$-block if and only if they have the same $e$-core, respectively $e$-cocore, if $p\mid \Psi_e$, respectively $p\mid \Psi_e'$.   The trivial character is represented by the symbol $n \choose \emptyset$, which has $e$-core and $e$-cocore $r\choose\emptyset$, where $0\leq r< e$ is the remainder when $n:=me+r$ is divided by $e$.  Note that again every unipotent character  lies in the principal $2$-block.

As before, write $n=2^{a_1}+2^{a_2}+\ldots+2^{a_t}$ with $a_1<a_2<\ldots<a_t$ for the $2$-adic expansion of $n$, let $1\leq t_0\leq t$ be the smallest such that $r<2^{a_{t_0}}$, and define $T:=2^{a_t}+2^{a_{t-1}}+\cdots +2^{a_{t_0}}$ and $a:=n-T+1$.

From here, the considerations are very similar to the case of type $\type{A}$ and $\tw{2}\type{A}$ above, using the degree formulas in \cite[Section 13.8]{carter}.  Namely, we again have the following cases.

\indent Case I: $r=n-T=a-1$ so $me=T$.

\indent Case IIa: $r\geq a$.

\indent Case IIb: $r<a-1$ and either $e>a-1$ or $p\nmid (m-1)$.

\indent Case IIc: $r<a-1, e\leq a-1$, and $p\mid (m-1)$.

 Table \ref{tab:typeC} lists symbols that complete the proof in cases I and IIa,b. Note that in Case I, the $\pi$-part of the corresponding character degree is 
 \[\frac{\prod_{k=me+1}^n \Psi_{2k} \prod_{k=me-r}^{me-1}\Psi_{2k}}{\prod_{k=1}^{r}\Psi_{2k}\prod_{k=1}^{r}\Psi_{2k}}=\frac{\prod_{k=T+1}^n \Psi_{2k} \prod_{k=T-(n-T)}^{T-1}\Psi_{2k}}{\prod_{k=1}^{n-T}\Psi_{2k}\prod_{k=1}^{n-T}\Psi_{2k}},\]
 which is $\pi'$.  Also note that in Case I when $r=0$, the listed character is the Steinberg character.  In Cases IIa-b, we have the $\pi$-part of the corresponding character degree is the same as that of \eqref{eq:firstdeg}, with each $\Psi_k$ replaced with $\Psi_{2k}$ and $T-me$ replaced by $|T-me|$, multiplied by $\frac{\Psi'_{me}\Psi'_T}{2\Psi'_{|T-me|}}$ in the case $p\mid \Psi_e$ or $m$ even and by $\frac{\Psi_{me}\Psi'_T}{2\Psi_{|T-me|}}$ when $m$ is odd and $p\mid \Psi_e'$.
  Then since $T$ is even (so $(\Psi_T')_2=2$) and the $2$-part of $|T-me|$ is again the same as that of $me$, we see that these are still odd.  Further, note that $p\mid \Psi_{me}$ in the first situation and $p\mid \Psi_{me}'$ in the latter, so these are $\pi'$ since further $me\neq T$ and $T=me\pm x$ where $r=n-T\mp x$ (recall that even if $r<a-1$, we still have $x=a-r-1$ is not a multiple of $e$), we see $e\nmid T$ so $p\nmid \Psi_T'$. 
 
 In Case IIc, we use the same strategy as in the case of type $\type{A}$; namely, using analogous symbols with the roles of $\{me, r, a, T\}$ played by $\{me-e, r+e, b, S\}$, where $b$ and $S$ are as defined there.

\begin{table}\small
\caption{Symbols for Unipotent Characters in $\irrp{2}{B_{2}(\wt{S})}\cap \irrp{p}{B_{p}(\wt{S})}$ for types $\type{B}_n$ and $\type{C}_n$ with $n\geq 3$, where $n=me+r$ }\label{tab:typeC}
\begin{tabular}{?c?c?c?c?}
\Xhline{3\arrayrulewidth}
Condition on $p$ & $p$ divides $\Psi_e$ & $p$ divides $\Psi'_e$, $m$ even & $p$ divides $\Psi_e'$, $m$ odd \\
\Xhline{3\arrayrulewidth}

Case I & \multicolumn{3}{c?}{${0, 1, 2, \ldots, me-r-1, me}\choose{ 1, 2, \ldots, me-r-1, me}$}\\
\hline
Case IIa  & \multicolumn{2}{c?}{${1, 2, \ldots, me-a, me, T}\choose{0, 1, 2, \ldots, me-a}$} & {${0, 1, 2, \ldots, me-a, me}\choose{ 1, 2, \ldots, me-a, T}$} \\
\hline
Case IIb  & \multicolumn{2}{c?}{{${1, 2, \ldots, me-a, T, me}\choose{0, 1, 2, \ldots, me-a}$}} & {${0, 1, 2, \ldots, me-a, me}\choose{ 1, 2, \ldots, me-a, T}$} \\
\hline

\Xhline{3\arrayrulewidth} 
\end{tabular}
\end{table}

\bigskip

\noindent\textbf{Types $\type{D}_n$ and $\tw{2}\type{D}_n$ with $n\geq 4$}

In this case the unipotent characters of $S, G,$ and $\wt{G}$ are in bijection with symbols of rank $n$ and satisfying $|b-a|$ is $0\pmod 4$ (unless $\lambda=\mu$, in which case there are two characters) for type $\type{D}$ and $2\pmod 4$ for type $\tw{2}\type{D}$.  Let $e$ be defined the same way as in type $\type{B}$ and $\type{C}$. The block distribution is described the same way as for those types.  
We again write $n=me+r$ with $r<e$.

For type $\type{D}_n$, the trivial character is represented by the symbol $n \choose 0$, which has $e$-core $r\choose 0 $ if $e\nmid n$ and $\emptyset \choose \emptyset$ if $e\mid n$.  It has $e$-cocore $r\choose 0$ if $m$ is even and $e\nmid n$; $0 \hbox{ } r \choose \emptyset$ if $m$ is odd and $e\nmid n$; $\emptyset \choose \emptyset$ if $m$ is even and $e\mid n$; and $e\choose 0$ if $m$ is odd and $e\mid n$.

If $r=1$ or if $e\mid n$ and $p\mid \Psi_e$ or $m$ is even, then the Steinberg character satisfies our conditions.  So, we assume that either $r>1$, in which case we may consider the same cases I, IIa-c as before, or $r=0, p\mid \Psi_e'$, and $m$ is odd.  Table \ref{tab:typeD} lists symbols that complete the proof in cases I, IIa-b using arguments like before, and a similar strategy to before works for IIc with the roles of $\{me, r, a, T\}$ again played by $\{me-e, r+e, b, S\}$.   In the case $r=0, p\mid \Psi_e'$, and $m$ is odd, cases and symbols analogous to those used when $m$ is even can be used, with the roles of $\{r, me\}$ replaced by $\{e, n-e\}$ (that is, the same symbols as IIc but with $r=0$), sometimes with some small modification.  For example, when $e=n-S$, analogous to case I, the symbol ${n-e}\choose e$ works if $p\nmid (m-2)$, and ${1, n-e}\choose {0, e+1}$ works otherwise.

For type $\tw{2}\type{D}_n$, the trivial character is represented by the symbol $0\hbox{ }  n \choose \emptyset$, which has $e$-core $0 \hbox{ }  r \choose \emptyset$ when $e\nmid n$ and $0 \hbox{ } e \choose \emptyset$ if $e\mid n$.  The $e$-cocore is $0 \hbox{ }  r \choose \emptyset$ if $e\nmid n$ and $m$ is even, $r\choose 0$ if $e\nmid n$ and $m$ is odd, $e\choose 0$ if $e\mid n$ and $m$ is even, and $\emptyset\choose\emptyset$ if $e\mid n$ and $m$ is odd.

When $r=1$ or if $r=0, p\mid \Psi_e'$, and $m$ is odd, the Steinberg character again works.  When $r\geq 2$, we have the same cases as the previous situations. Table  \ref{tab:type2D} lists symbols that complete the proof for I and IIa-b, and again the same strategy works for IIc. In the remaining case, the situation is again similar, with the role of $\{r, me\}$ replaced by $\{e, n-e\}$.  
\begin{table}\small
\caption{Symbols for Unipotent Characters in $\irrp{2}{B_{2}(\wt{S})}\cap \irrp{p}{B_{p}(\wt S)}$ for type $\type{D}_n$ with $n\geq 5$, where $n=me+r$ }\label{tab:typeD} 
\begin{tabular}{?c?c?c?c?}
\Xhline{3\arrayrulewidth}
Condition on $p$ & $p$ divides $\Psi_e$ & $p$ divides $\Psi'_e$, $m$ even & $p$ divides $\Psi_e'$, $m$ odd \\
\Xhline{3\arrayrulewidth}

Case I & \multicolumn{3}{c?}{$me \choose r$}\\
\hline
Case IIa  & \multicolumn{2}{c?}{${1, 2, \ldots, me-a, me, T}\choose{0, 1, 2, \ldots, me-a+1}$} & {${0, 1, 2, \ldots, me-a, me}\choose{ 1, 2, \ldots, me-a+1, T}$} \\
\hline
Case IIb  & \multicolumn{2}{c?}{{${1, 2, \ldots, me-a, T, me}\choose{0, 1, 2, \ldots, me-a+1}$}} & {${0, 1, 2, \ldots, me-a, me}\choose{ 1, 2, \ldots, me-a+1, T}$} \\
\hline

\Xhline{3\arrayrulewidth} 
\end{tabular}

\end{table}
\begin{table}\small
\caption{Symbols for Unipotent Characters in $\irrp{2}{B_{2}(\wt S)}\cap \irrp{p}{B_{p}(\wt S)}$ for type $\tw{2}\type{D}_n$ with $n\geq 4$, where $n=me+r$ }\label{tab:type2D}
\begin{tabular}{|c?c?c?c|}
\hline
Condition on $p$ & $p$ divides $\Psi_e$ & $p$ divides $\Psi'_e$, $m$ even & $p$ divides $\Psi_e'$, $m$ odd \\
\hline
Case I  & \multicolumn{3}{c|}{$r, me \choose \emptyset$}\\
\hline
Case IIa, $n\neq T$ &\multicolumn{2}{c?}{{${1, 2, \ldots, me-a, me-a+1, me, T}\choose{0, 1, 2, \ldots, me-a}$}} & {${1, 2, \ldots, me-a, T}\choose{0, 1, 2, \ldots, me-a+1, me}$}\\
\hline
Case IIb, $n\neq T$ &\multicolumn{2}{c?}{{${1, 2, \ldots, me-a, me-a+1, T, me}\choose{0, 1, 2, \ldots, me-a}$}} & {${1, 2, \ldots, me-a, T}\choose{0, 1, 2, \ldots, me-a+1, me}$}\\
\hline
Case IIa,b, $n= T$ &\multicolumn{3}{c|}{{${1, 2, \ldots, me-1, me}\choose{0, 1, 2, \ldots, me, T}$}}  \\

\hline

\end{tabular}

\end{table}

\bigskip

Finally, note that the characters constructed here further extend to $\aut(S)$ by \cite[Theorems 2.4-2.5]{Malle08}.
\end{proof}

\begin{rem}\label{rem:oddp1}
Although we do not need it here, we remark briefly that a similar strategy can likely be used in the case of the classical groups when $\pi=\{p_1,p_2\}$ is a pair of odd primes.  Letting $e_i:=d_{p_i}(\epsilon\ell^f)$ in the case $\PSL_n^\epsilon(\ell^f)$ and $d_{p_i}((\ell^f)^2)$ in the remaining cases, we may write $n=m_ie_i+r_i$ and assume without loss that $r_1\leq r_2$.
  By \cite[Theorem 1.9]{MO83}, there is a height-preserving bijection between $B_{p_i}(\wt{G})$ and $B_{p_i}(\GL^\epsilon_{m_ie_i}(q))$ in the case of $\PSL_n^\epsilon(\ell^f)$, and a similar statement holds in the remaining cases using \cite[Theorem 5.17]{malle19}. (Although one must sometimes be careful here and work with the general or special orthogonal groups rather than $\wt{G}$.) 
  Using this, we may assume that $r_1=0$ and replace $r_2$ with $r_2-r_1$, so that $n=m_1e_1=m_2e_2+r_2$ with $e_2>r_2\geq 0$.  
From here, writing the $p_1$-adic expansion of $m_1$ as $m_1=c_1p_1^{a_1}+c_2p_1^{a_2}+\ldots+c_tp_1^{a_t}$ with $a_1<\ldots<a_t$ and $1\leq c_i<p_1$ for $1\leq i\leq t$, a similar strategy to Proposition \ref{prop:nondef} can be used.  
\end{rem}

Note that the Galois group $\gal:=\mathrm{Gal}(\Q^{\mathrm{ab}}/\Q)$ acts on $\irrp{p}{B_p(G)}$ for any finite group $G$ and prime $p\mid|G|$. That is, $\chi^\sigma\in\irrp{p}{B_p(G)}$ for any $\chi\in\irrp{p}{B_p(G)}$ and $\sigma\in\gal$. We will use this to help identify which extension of the character from Proposition \ref{prop:nondef} lies in the principal block of $A$.

\begin{lem}\label{lem:rationalext}
Suppose $X\lhd A$ are finite groups and $p$ is a prime dividing $|X|$ but not dividing $|A/X|$.  Suppose further that $A=X\cent{A}{P}$ for $P\in \syl{p}{X}$ and that $A/X$ is abelian.  Let $\chi\in\irr{B_p(X)}$ be rational-valued and extend to $A$.  Then there is a unique extension of $\chi$ to $A$ lying in $B_p(A)$, and this extension is rational-valued.
\end{lem}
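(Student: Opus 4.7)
The plan is to derive both assertions directly from the Alperin--Dade theorem (Theorem \ref{isomblocks}), supplemented by a short Galois-averaging argument for the rationality statement.

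First, I will verify that the hypotheses of Theorem \ref{isomblocks} apply to the pair $X \lhd A$. Since $p \nmid |A/X|$, any $P \in \syl{p}{X}$ is automatically a Sylow $p$-subgroup of $A$, and by hypothesis $A = X\cent{A}{P}$. Thus Theorem \ref{isomblocks} provides a bijection $\irr{B_p(A)} \to \irr{B_p(X)}$ given by restriction. The unique preimage $\hat\chi \in \irr{B_p(A)}$ of $\chi$ satisfies $\hat\chi|_X = \chi$, so in particular $\hat\chi$ is an extension of $\chi$ to $A$; by bijectivity, it is the unique extension of $\chi$ lying in $B_p(A)$. This settles the first assertion. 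Note that the assumption that $\chi$ already extends to $A$ is actually recovered for free here, but it is harmless.

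For the rationality claim, let $\sigma \in \gal$, where $\gal = \mathrm{Gal}(\Q^{\mathrm{ab}}/\Q)$. Galois automorphisms permute the $p$-blocks of $A$, and they fix the principal block $B_p(A)$ because $1_A^\sigma = 1_A \in \irr{B_p(A)}$. Hence $\hat\chi^\sigma \in \irr{B_p(A)}$, and its restriction to $X$ is $(\hat\chi^\sigma)|_X = (\hat\chi|_X)^\sigma = \chi^\sigma = \chi$, using that $\chi$ is rational-valued. The uniqueness from the previous paragraph then forces $\hat\chi^\sigma = \hat\chi$. Since $\sigma \in \gal$ was arbitrary, $\hat\chi$ is rational-valued, as required.

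I do not anticipate any substantial obstacle in executing this plan: the argument relies only on Theorem \ref{isomblocks} together with the standard fact that Galois automorphisms preserve the principal $p$-block. The one minor point to be careful about is that Sylow $p$-subgroups of $X$ remain Sylow in $A$ under the coprimality hypothesis, which ensures Theorem \ref{isomblocks} is applicable in the stated form.
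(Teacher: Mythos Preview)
Your proposal is correct and follows essentially the same argument as the paper: apply Theorem \ref{isomblocks} to obtain the unique extension $\hat\chi\in\irr{B_p(A)}$, then use that any $\sigma\in\gal$ fixes $B_p(A)$ and $\chi$ to conclude $\hat\chi^\sigma=\hat\chi$ by uniqueness. Your version is simply more explicit about verifying the hypotheses of Theorem \ref{isomblocks} and about why the principal block is $\gal$-stable; your observation that the hypothesis ``$\chi$ extends to $A$'' (and indeed ``$A/X$ abelian'') is not actually needed is also correct.
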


\begin{proof} 
By Theorem \ref{isomblocks}, there is a unique extension $\hat\chi$ of $\chi$ to $A$ lying in $B_p(A)$. Then since $\chi^\sigma=\chi$, for any $\sigma\in\gal$, this means $\hat\chi^\sigma\in\irr{B_p(A)}$  lies above $\chi$ as well, forcing $\hat\chi^\sigma=\hat\chi$.
\end{proof}

We are now ready to answer Question \ref{almostsimpleLie} for groups of Lie type when $2\in\pi$.

\begin{prop}\label{lietype}
Let $S=G/\zent{G}$ be a simple group of Lie type not listed in Proposition \ref{prop:sporadicexceptschur}, let $p$ be an odd prime dividing $|S|$, and let $S\leq A\leq \aut(S)$ satisfying the hypothesis of Question \ref{almostsimpleLie}.  Then the set $\irrp{2}{B_2(A)}\cap\irrp{p}{B_p(A)}$ is nontrivial. In particular, Question \ref{almostsimpleLie} holds when $\pi=\{2,p\}$ and $S=G/\zent{G}$ is a simple group of Lie type. 
\end{prop}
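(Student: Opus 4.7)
The plan is to split into two cases according to the defining characteristic $\ell$ of $S$. If $\ell\in\{2,p\}$, I would apply Lemma \ref{definingchar} directly, taking the roles of the two primes in that lemma to be $\ell$ and the other prime in $\pi=\{2,p\}$; this immediately produces a character in $\irrp{p}{B_p(A)}\cap\irrp{2}{B_2(A)}$, disposing of Case 1.

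In the main case $\ell\notin\{2,p\}$, Proposition \ref{prop:nondef} furnishes a rational-valued unipotent character $\chi_S\in\irrp{2}{B_2(S)}\cap\irrp{p}{B_p(S)}$ that extends to $\aut(S)$, and hence restricts to an extendible character of $S\lhd A$. The hypothesis $A=S\cent{A}{P}$ for $P\in\syl{p}{S}$ together with Theorem \ref{A:Sp'}(1) forces $|A/S|$ to be coprime to $p$; combining this with the hypothesis $A=S\cent{A}{Q}$ for $Q\in\syl{2}{S}$, Theorem \ref{A:Sp'}(2), and the known structure of $\out(S)$ for simple groups of Lie type (cf.\ \cite{GLS3}), one verifies that $A/S$ is abelian. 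Applying Lemma \ref{lem:rationalext} with $X=S$ then yields a unique extension $\hat\chi\in\irr{B_p(A)}$ of $\chi_S$, and this extension is rational-valued. Its degree is $\chi_S(1)$, which is prime to $2p$, so $\hat\chi\in\irrp{p}{B_p(A)}$ and has $2'$-degree.

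The crux is then to verify that $\hat\chi$ also lies in $B_2(A)$. Since $\hat\chi$ sits above $\chi_S\in\irr{B_2(S)}$, it belongs to some 2-block $B$ of $A$ covering $B_2(S)$. The idea is to argue that the 2-blocks of $A$ covering $B_2(S)$ are parametrized in a $\gal$-equivariant manner by the irreducible characters of the odd-order abelian group $(A/S)_{2'}$, with $B_2(A)$ corresponding to the trivial character. Because $\hat\chi$ is rational, $B$ must be $\gal$-invariant; since $(A/S)_{2'}$ has no nontrivial rational linear characters, this forces $B=B_2(A)$. Hence $\hat\chi\in\irrp{2}{B_2(A)}\cap\irrp{p}{B_p(A)}$, and the ``in particular'' clause holds vacuously since the hypothesis of Question \ref{almostsimpleLie} can never be met.

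The principal obstacle is justifying the Galois-equivariant parametrization of 2-blocks covering $B_2(S)$ and the verification that $A/S$ is abelian (which requires a small case analysis on $\out(S)$); once these structural facts are in hand, the rationality supplied by Lemma \ref{lem:rationalext} together with the odd order of $(A/S)_{2'}$ does the rest of the work.
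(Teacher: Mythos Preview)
Your Case 1 is fine, and in Case 2 your overall architecture is close to the paper's. The substantive gap is the unproved claim that $A/S$ is abelian. You need this twice: once to invoke Lemma~\ref{lem:rationalext} at the level $S\lhd A$ for the prime $p$ (the abelian hypothesis is what guarantees the unique character of $B_p(A)$ above $\chi_S$ is an \emph{extension}, hence of the same degree), and once to make your ``Galois-equivariant parametrization'' of $2$-blocks over $B_2(S)$ work. The parametrization itself is actually correct once you assume the odd part $A_0/S$ of $A/S$ is abelian: writing $A_0$ for the subgroup with $A_0/S=O_{2'}(A/S)$, Alperin--Dade gives a canonical extension map $\Irr(B_2(S))\to\Irr(B_2(A_0))$, and one checks that the $2$-blocks of $A_0$ covering $B_2(S)$ are exactly the twists $B_2(A_0)\lambda$ for $\lambda\in\Irr(A_0/S)$, pairwise distinct because a nontrivial linear character of an odd-order group is never in the principal $2$-block. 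Since $A/A_0$ is a $2$-group, this then lifts to $A$ as you want. But without $A_0/S$ abelian the counting breaks down (Gallagher gives $|\Irr(A_0/S)|$ characters above $\chi_S$, not $|A_0/S|$), and the parametrization need not be a bijection.

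The difficulty is that $\out(S)$ is genuinely non-abelian for types $\type{A}_{n-1}$, $\type{D}_n$, $\type{E}_6$ (graph automorphisms invert or permute the diagonal part), so ``$A/S$ abelian'' is not automatic from the structure of $\out(S)$ alone; you would have to extract it from the centralizer conditions $A=S\cent{A}{P}=S\cent{A}{Q}$, which is a nontrivial case analysis you have not carried out. The paper sidesteps this entirely by passing to $\wt S$ rather than $S$: the odd part of $\out(S)/(\wt S/S)$ is abelian for every Lie type (the only non-abelian graph-automorphism group is $\sym_3$ in type $\type{D}_4$, whose odd part is cyclic), so the intermediate group $A_1=\wt S C_1\cap A$ satisfies $A_1/\wt S$ odd, abelian, and prime to $p$ without any further work. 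At that level one can apply Lemma~\ref{lem:rationalext} for \emph{both} primes simultaneously, forcing the two canonical extensions to coincide because an odd-order abelian group has a unique rational linear character; the passage from $A_1$ to $A$ is then free since $A/A_1$ is a $2$-group. This is both cleaner and avoids the need to classify which outer automorphisms can centralize a Sylow $2$-subgroup of $S$.
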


\begin{proof}
By Lemma \ref{definingchar}, we may assume that the defining characteristic of $G$ is not in $\pi$.  Then by Proposition \ref{prop:nondef}, there is a rational-valued unipotent character $\chi\in\Irr_{\pi'}(B_2(\wt S)\cap B_p(\wt S))$ that extends to $\aut(S)$.  

Write $C$ for the centralizer in $\aut(S)$ of a Sylow $2$-subgroup of $P$, and let $C=C_1C_2$ be as in Theorem \ref{A:Sp'}(2). By \cite[Lemma 3.3]{GMS22}, there exist extensions $\chi_2$ and $\chi_p$ of $\chi$ to $B_2(\wt{S}C)$ and to $B_p(\wt{S}C)$, respectively.   
Now, notice that $\wt{S}C_1/\wt{S}$ has odd order and is abelian, and that $(\wt{S}C_1\cap A)/\wt{S}$ further has order prime to $p$, by Lemma \ref{A:Sp'}(1). (Indeed, note that the odd part of $\Out(S)/\wt{S}$ is abelian - see, e.g. \cite[Theorems 2.5.12-2.5.14]{GLS3} for information about the structure of $\out(S)$.) Note that the restriction of $\chi_2$ and of $\chi_p$ to $A_1:=\wt{S}C_1\cap A$ must lie in the respective principal blocks.  Then by Lemma \ref{lem:rationalext}, these restrictions are the unique extension of $\chi$ to $B_2(A_1)$ and $B_p(A_1)$, and are rational-valued.  But since $(A_1)/\wt{S}$ is abelian and odd, we see only one extension to $A_1$ is rational-valued, and hence these two characters are the same.  That is, there is an extension $\hat\chi$ of $\chi$ to $B_2(A_1)\cap B_p(A_1)$. Since $A/A_1$ is a $2$-group, we know $B_2(A)$ is the unique block of $A$ lying above $B_2(A_1)$. In particular, the restriction of $\chi_p$ to $A$, which extends $\hat\chi$, must lie in $B_p(A)\cap B_2(A)$.
\end{proof}

\begin{rem}\label{rem:oddp1complication}
We end this section by discussing where our techniques fail when $2\not\in\pi$. 
Using Theorem \ref{A:Sp'}, in the context of Question \ref{almostsimpleLie}, we have $|A/S|$ is $\pi'$ when $2\not\in\pi$. Suppose that we have a nontrivial rational character in $\irrp{p}{B_p(\wt{S})}\cap\irrp{q}{B_q(\wt{S})}$ extending to $\aut(S)$ as before, and that   $|A\wt{S}/\wt{S}|$ is abelian.  Then using Lemma \ref{lem:rationalext}, there are again unique extensions $\hat\chi_p$ and $\hat\chi_q$ of $\chi$ in $B_p(A)$ and $B_q(A)$, respectively.  So, we would need to argue that $\hat\chi_p=\hat\chi_q$.  Arguing like before, note that $\hat\chi_p=\hat\chi_q\beta$ for some linear character $\beta$ satisfying $\beta^2=1$.  However, in this case, we may have $A\wt{S}/\wt{S}$ is even (for example, $A$ may contain a field automorphism of order 2), and hence it is plausible that $\beta\neq 1$.

We thank P. H. Tiep for pointing out the following explicit example of the above complication. Let $S=\alt_8=\SL_4(2)$ and $A=\aut(S)=\sym_8$, and let $\pi=\{3,5\}$. Then letting $\chi\in\Irr(S)$ be either the (unique) character of degree 14 or the (unique) character of degree 64, we have $\chi\in\Irr_{3'}(B_3(S))\cap \Irr_{5'}(B_5(S))$. Of the two extensions of $\chi$ to $A$,  one lies in $B_3(A)$ but not $B_5(A)$, and the other lies in $B_5(A)$ but not $B_3(A)$.  (However, it is worth pointing out that there is an extension of the character of degree 56 that does lie in $B_5(A)\cap B_3(A)$, so $A$ still satisfies Question \ref{questionalmostsimple} for $\pi$.)
\end{rem}

\subsection{Alternating Groups}

Similar calculations to those used in Proposition \ref{prop:nondef} above for type $\PSL_n(\ell^f)$ 
 also work for the case of the alternating groups $\alt_n$.  Recall that for $n\geq 8$, $\aut(\alt_n)=\sym_n$, the symmetric group on $n$ letters.  For $\pi=\{2,p\}$, any $\chi\in\irrp{\pi}{\sym_n}$ in both principal blocks will restrict irreducibly to a $\pi'$ character also in both principal blocks of $\alt_n$.

The elements of $\Irr(\sym_n)$ are indexed by partitions of $n$, and two characters lie in the same $2$-block, respectively $p$-block, if the corresponding partitions have the same $2$-core, respectively $p$-core.  The trivial character is represented by $(n)$, so that a character is in the principal $p$-block if and only if its $p$-core is $(r)$, where $n=mp+r$ with $r<p$, and in the principal $2$-block if the $2$-core is $\emptyset$ for $n$ even and $(1)$ for $n$ odd.  The degree of the characters are given by the hooklength formula.

\begin{prop}\label{prop:alts}
Let $p$ be an odd prime and let $n\geq 8$.  Then there is a nontrivial element in $\irrp{2}{B_2(\sym_n)}\cap\irrp{p}{B_p(\sym_n)}$.  In particular, Question \ref{almostsimpleLie} holds for $q=2$ and $p$ odd when $S$ is an alternating group $\alt_n$ with $n\geq 8$.
\end{prop}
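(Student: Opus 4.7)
The plan is to adapt directly the strategy of Proposition~\ref{prop:nondef} in type $\type{A}_{n-1}$. There, character degrees were ratios of terms $\Psi_k = (\ell^f)^k - 1$; under the hook-length formula, the degrees of $\sym_n$-characters become ratios of integers $k$, so the same divisibility bookkeeping transfers almost verbatim.

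First I would set up the notation exactly as in Proposition~\ref{prop:nondef}: write $n = mp + r$ with $0 \le r < p$, write the binary expansion $n = 2^{a_1} + \cdots + 2^{a_t}$ with $a_1 < \cdots < a_t$, let $t_0$ be minimal with $r < 2^{a_{t_0}}$, and put $T = \sum_{i \ge t_0} 2^{a_i}$ and $a = n - T + 1$. By the Nakayama conjecture, $\chi^\lambda \in B_p(\sym_n)$ iff $\lambda$ has $p$-core $(r)$, and $\chi^\lambda \in B_2(\sym_n)$ iff the $2$-core of $\lambda$ equals that of $(n)$. So it suffices to exhibit a partition $\lambda \ne (n)$ of $n$ with both prescribed cores such that $\chi^\lambda(1)$ is coprime to $2p$.

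I would then split into the same four cases I, IIa, IIb, IIc as in Proposition~\ref{prop:nondef}, using the same shape of partition in each case: the hook $\lambda = (r, 1^{mp})$ in Case I; the near-hooks $(r, a, 1^{mp-a})$ in Case IIa and $(a-1, r+1, 1^{mp-a})$ (reordered as needed) in Case IIb; and in Case IIc the analogous constructions with $\{mp, r, a, T\}$ replaced by $\{mp - p, r + p, b, S\}$, where $s_0 > t_0$ is minimal with $r + p < 2^{a_{s_0}}$, $S = \sum_{i \ge s_0} 2^{a_i}$, and $b = n - S + 1$. For each of these, a direct abacus calculation confirms that the $p$-core is $(r)$ and the $2$-core matches that of $(n)$, and the hook-length formula yields a degree expression of precisely the same shape as equation~\eqref{eq:firstdeg}, with each $\Psi_k$ replaced by $k$.

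The main obstacle is verifying, in each of the four cases, that the resulting degree is coprime to $2p$. Here the analysis from Proposition~\ref{prop:nondef} reapplies: the $p$-adic cancellation is the direct integer analogue of the $\Psi_k$ cancellation (namely $p \mid k$ in place of $p \mid \Psi_k$), and the $2$-adic cancellation uses the identity $v_2(T + i) = v_2(i)$ for $1 \le i \le n - T$, which holds by the construction of $T$ from the binary expansion of $n$. For example, in Case I the degree is the binomial coefficient $\binom{n-1}{r-1}$, which is coprime to $p$ by Kummer's theorem (no carries in base $p$ since $r - 1 < p$) and odd since the binary supports of $r - 1$ and $mp = T$ are disjoint by construction; small degenerate cases such as $r \le 1$ are handled immediately by the sign character $(1^n)$. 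The ``in particular'' clause then follows by restriction to $\alt_n$: such a $\chi^\lambda$ has odd degree, so $\lambda \ne \lambda'$, and hence $\chi^\lambda$ restricts irreducibly to a nontrivial $\pi'$-degree character of $\alt_n$ lying in both principal blocks, since $[\sym_n : \alt_n] = 2$ is coprime to $p$.
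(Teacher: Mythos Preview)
Your proposal is correct and uses the same core idea as the paper: carry over the partitions from Cases I and IIa--c of the type-$\type{A}$ argument in Proposition~\ref{prop:nondef}, with each $\Psi_k$ replaced by the integer $k$ and $e$ by $p$, and check that the hook-length degrees are coprime to $2p$. The difference is only in the packaging. The paper first invokes the height-preserving bijections of \cite{MO83} to reduce to either $r=0$ or $n$ even, and then treats $r=0$ by a separate case analysis (essentially Case~IIc with $r=0$, together with some exceptional subcases); you skip both steps, running Cases I--IIc directly for arbitrary $n$ and disposing of $r\le 1$ with the sign character $(1^n)$. This works because the $2$-adic arguments in Proposition~\ref{prop:nondef} (resting on $v_2(T\pm i)=v_2(i)$ for $0<i<2^{a_{t_0}}$) nowhere use the parity of $n$, and because in $\sym_n$ any odd-degree irreducible character automatically lies in the principal $2$-block---the latter being the unique $2$-block of maximal defect---so your separate $2$-core check is in fact subsumed by the odd-degree verification. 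Your route is thus a mild streamlining of the paper's.
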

\begin{proof}
  Write $n=mp+r$ with $r<p$ and $n=2w+r_0$ with $r_0\in\{0,1\}$. By \cite[Theorem 1.10]{MO83}, there are height-preserving bijections between $B_2(\sym_n)$ and $B_2(\sym_{2w})$, and between $B_p(\sym_n)$ and $B_p(\sym_{mp})$.  For this reason, we may assume that either $r=0$ or that $r\geq 1$ and $n$ is even. 

Assume first that $r\geq 1$ and $n$ is even.  As in Proposition \ref{prop:nondef}, write  $n=2^{a_1}+2^{a_2}+\ldots+2^{a_t}$ with $a_1<a_2<\ldots<a_t$ be the $2$-adic expansion of $n$; let $1\leq t_0\leq t$ be the smallest such that $r<2^{a_{t_0}}$; and write $T:=2^{a_t}+2^{a_{t-1}}+\cdots +2^{a_{t_0}}$ and $a:=n-T+1$.  %Note that if $t_0\neq 1$, we have $r\geq 2^{t_0-1}$.
Here we may consider the same cases as in the proof of Proposition \ref{prop:nondef} for type $\type{A}_{n-1}$, and use the same partitions, with the role of $e$ there now played by $p$.  We summarize this in Table \ref{tab:alt}.  Here in the cases of IIc, $S$ and $b$ are defined similar to before: $s_0$ is the smallest such that $p+r<2^{a_{s_0}}$; $S:=2^{a_t}+\cdots 2^{a_{s_0}}$; and $b:=n-S+1$.

  We note that the degrees in this case are analogous to the degrees in the case of $\type{A}_{n-1}$, now with each $\Psi_k$ replaced by the integer $k$ and $e$ replaced by $p$.  For example, in case I, the degree is \[\frac{\prod_{k=mp+1}^{n-1}k}{\prod_{k=1}^{r-1}k},\] which is \eqref{eq:IIdeg} with each $\Psi_k$ replaced by the integer $k$ and $e$ replaced by $p$.
Then in each case, the degrees are $\pi'$ using the same arguments as in the proof of Proposition \ref{prop:nondef}.

\begin{table}\small
\caption{Partitions for Characters in $\irrp{2}{B_{2}(\sym_n)}\cap \irrp{p}{B_{p}(\sym_n)}$ for $n\geq 8$, where $n$ is even and $n=mp+r$ with $r<p$.}\label{tab:alt}
\begin{tabular}{|c?c|}
\hline
Condition on $p$ & partition $\lambda$ \\
\hline
Case I: $r=n-T=a-1$ &   
$ (1^{mp}, r)$   \\
\hline
Case IIa: $r\geq a$ & $(1^{n-r-a}, a, r)=(1^{mp-a}, a, r)$ \\
\hline
Case IIb: $r< a-1$ and & \\either $e>a-1$ or $p\nmid (m-1)$ & $(1^{mp-a}, r+1, a-1)$\\
\hline
Case IIc: $p\leq a-1$ and $p\mid (m-1)$ & $(1^{mp-p}, r+p)$ if $p+r=n-S$\\
&  $(1^{n-r-p-b}, b, r+p)$ if $p+r\geq b$\\ &  $(1^{n-r-p-b}, r+p+1, b-1)$ if $p+r<b-1$\\
\hline
\end{tabular}
\end{table}

Now assume $r=0$, so that $n=mp=2w+r_0$ with $r_0\in\{0,1\}$.  Here  the same partitions as in Case IIc in Table \ref{tab:alt}, but taking $r=0$, give members of $\irrp{2}{B_2(\sym_n)}\cap\irrp{p}{B_p(\sym_n)}$ except in the case $p\geq b$ and $p\mid m$ or $p<b-1$ and $p\mid m$ or $m-2$. Note that if $S=n$, the second line is $(1^{mp-p}, p)$, the same as the first, and the character is still $\pi'$. Hence in the remaining cases $p\neq n-S$ when $p\mid m$ or $m-2$, we may also assume that $S\neq n$. Then $(1^S, n-S)$ gives a character in $B_2(\sym_n)\cap B_p(\sym_n)$ with degree $\frac{\prod_{k=S+1}^{n-1}k}{\prod_{k=1}^{n-S-1}k}$, which is odd using the same reasoning as before and is $p'$ since $p\nmid (m-1)$ and $n-2p=(m-2)p$ must be less than $S$. 
\end{proof}

\section{Groups which are $p$ and $q$-solvable}\label{sec:pqsolvable}
In this section we prove Conjectures B and C under solvability conditions.
 
\begin{lem}\label{ber}
Suppose that $\irrp{p}G=\irrp{p} {G/N}$.  Then $N$ has a normal $p$-complement.
\end{lem}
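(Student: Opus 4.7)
The plan is to proceed by induction on $|G|$, first reducing to the case $O_{p'}(N)=1$ and then showing that $N$ must be a $p$-group (which, once $O_{p'}(N)=1$, is equivalent to the desired normal $p$-complement conclusion, since any normal $p$-complement would be a $p'$-subgroup of $O_{p'}(N)=1$).

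For the reduction, set $K:=O_{p'}(N)$. Since $K$ is characteristic in $N$, it is normal in $G$. The hypothesis $\irrp{p}{G}=\irrp{p}{G/N}$ descends to $\irrp{p}{G/K}=\irrp{p}{(G/K)/(N/K)}$: inflation preserves character degrees, so $p'$-degree characters of $G/K$ inflate to $p'$-degree characters of $G$, which by hypothesis are trivial on $N$, hence on $N/K$. If $K\ne 1$, applying the inductive hypothesis to $(G/K, N/K)$ yields a normal $p$-complement $L/K$ of $N/K$. Then $L$ is an extension of the $p'$-group $K$ by the $p'$-group $L/K$, so $L$ itself is a $p'$-group, while $N/L$ is a $p$-group, and hence $L$ is a normal $p$-complement of $N$. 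So I may assume $O_{p'}(N)=1$, and the goal becomes to show $N$ is a $p$-group.

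Suppose for contradiction $N$ is not a $p$-group; then $N$ contains a nontrivial $p'$-element, and I would aim to produce $\chi\in\irrp{p}{G}$ with $N\not\leq\ker\chi$, contradicting the hypothesis (which forces $\chi|_N=\chi(1)\cdot 1_N$). By Clifford theory, this reduces to exhibiting $\theta\in\Irr(N)\setminus\{1_N\}$ of $p'$-degree whose inertia group $T:=I_G(\theta)$ satisfies $[G:T]$ coprime to $p$, together with a character $\psi\in\irrp{p}{T|\theta}$; then $\chi:=\psi^G\in\irrp{p}{G|\theta}$ gives the required contradiction. The main obstacle is producing such a pair $(\theta,\psi)$. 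A natural route combines a Frattini reduction $G=N\cdot N_G(P)$ for $P\in\syl{p}{N}$ with the Alperin--Dade isomorphism (Theorem \ref{isomblocks}), which provides a degree-preserving bijection between the principal block of a suitable overgroup of $N$ and $\Irr(B_p(N))$. Coupled with the Isaacs--Smith theorem \cite{IS} underlying Lemma \ref{th} — which guarantees $\irrp{p}{B_p(N)}\ne\{1_N\}$ when $N$ is not a $p'$-group — this should produce the desired $p'$-degree lift. I expect the delicate point to be arranging the Alperin--Dade hypotheses (namely $G=N\cdot C_G(P)$ with $G/N$ of $p'$-order) after the Frattini reduction, or circumventing them by a block-theoretic covering argument.
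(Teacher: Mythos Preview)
Your reduction to $O_{p'}(N)=1$ is correct and standard, but the proof stops precisely at the point where the actual content lies. The paper does not attempt a direct argument here at all: it simply cites Berkovich's theorem (Theorem~7.7 of \cite{Na18}), which \emph{is} the statement of the lemma. So the comparison is between a one-line citation and an attempted self-contained proof that is not completed.

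There is a genuine gap in your sketch. After reducing to $O_{p'}(N)=1$, you propose to find $\theta\in\Irr(N)\setminus\{1_N\}$ of $p'$-degree with $p\nmid[G:I_G(\theta)]$ and then a $\psi\in\irrp{p}{I_G(\theta)\mid\theta}$. Neither existence is established. For the second, note that even a $P$-invariant $\theta$ of $p'$-degree need not admit any $p'$-degree character above it: take $N=\zent{G}$ in an extraspecial $p$-group and $\theta$ faithful linear; every character of $G$ over $\theta$ has degree $p$. Of course in that example $N$ is a $p$-group, so it does not contradict the lemma, but it does show your lifting mechanism can fail and that additional structural input is needed to rule it out.

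The tools you invoke do not supply that input. Theorem~\ref{isomblocks} requires both that $G/N$ be a $p'$-group and that $G=N\cent G P$; neither is available here, and the Frattini argument only yields $G=N\norm G{P_0}$, not $G=N\cent G{P_0}$. Passing to an intermediate overgroup $H=N\cent G{P_0}$ does not help: $H/N$ need not be a $p'$-group, and even if Alperin--Dade applied to $N\lhd H$, you would still face the same unresolved lifting problem from $H$ to $G$. The Isaacs--Smith input gives you a nontrivial $\theta\in\irrp{p}{B_p(N)}$, but says nothing about its behaviour under $G$. In short, you have correctly identified the obstacle but not overcome it; the paper sidesteps the issue entirely by quoting the result.
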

\medskip

\begin{proof}
This follows from a theorem of Y. Berkovich. (See,
for instance,  \cite[Theorem 7.7]{Na18}.)
\end{proof}

\medskip

\begin{thm}\label{nt}
Suppose $p$ and $q$ are different primes.
Then $\irrp {p}G=\irrp{q} G$ if and only if there are $P \in \syl pG$ and
$Q \in \syl qG$ such that $[P,Q]=1$, $PQ$ is abelian, and $\norm GP=\norm GQ$.
\end{thm}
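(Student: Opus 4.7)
The plan is to treat the two implications separately.

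\smallskip

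\noindent\textbf{The backward implication.} I plan to prove the \emph{a priori} stronger statement that every $\chi \in \irr G$ has degree coprime to $pq$, which trivially yields the equality $\irrp p G = \irrp q G = \irr G$. Set $N := \norm G P = \norm G Q$. Since $[P,Q]=1$ and both $P$ and $Q$ are normal Sylow subgroups of $N$, the product $PQ = P \times Q$ is an abelian normal subgroup of $N$ containing a full Sylow $p$- and a full Sylow $q$-subgroup of $N$. By It\^{o}'s theorem every $\psi \in \irr N$ satisfies $\psi(1) \mid [N:PQ]$, hence $\psi(1)$ is coprime to $pq$. Moreover, $[G:N]$ is coprime to $pq$, because $N$ contains Sylow $p$- and Sylow $q$-subgroups of $G$. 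For $\chi \in \irr G$, let $\psi$ be an irreducible constituent of $\chi|_N$ and $T := I_G(\psi)$; Clifford theory writes $\chi = \xi^G$ for some $\xi \in \irr{T \mid \psi}$, and $\chi(1) = [G:T] \cdot (\xi(1)/\psi(1)) \cdot \psi(1)$, where $\xi(1)/\psi(1)$ divides $[T:N]$ and $[G:T]$ divides $[G:N]$. All three factors are coprime to $pq$, and therefore so is $\chi(1)$.

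\smallskip

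\noindent\textbf{The forward implication.} Assume $\irrp p G = \irrp q G$. The plan has three steps.

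\emph{Step 1 (commuting Sylows).} The inclusions $\irrp p {B_p(G)} \subseteq \irrp p G = \irrp q G$ and $\irrp q {B_q(G)} \subseteq \irrp q G = \irrp p G$ show that every $p'$-degree character of $B_p(G)$ has $q'$-degree, and symmetrically. The Brauer Height Zero Conjecture for two primes, proved in \cite{LWWZ}, then yields $P \in \syl p G$ and $Q \in \syl q G$ with $[P,Q]=1$.

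\emph{Step 2 (abelian Sylows).} By \cite[Theorem 4.1]{MN20}, the commutation $[P,Q]=1$ forces every $\chi \in \irr{B_p(G)}$ to have $q'$-degree. Combined with the hypothesis $\irrp p G = \irrp q G$, every such $\chi$ also has $p'$-degree, so $\irr{B_p(G)} = \irrp p{B_p(G)}$. Kessar--Malle's theorem (the single-prime Brauer Height Zero Conjecture) now gives that $P$ is abelian, and symmetrically $Q$ is abelian, whence $PQ = P \times Q$ is abelian.

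\emph{Step 3 (equality of normalizers).} This is the main obstacle. A simple order count gives $Q \in \syl q{\cent G P}$ and $P \in \syl p{\cent G Q}$. If one can establish $Q \lhd \cent G P$, then $\norm G P$ normalizes $\cent G P$ and hence its unique Sylow $q$-subgroup $Q$, giving $\norm G P \subseteq \norm G Q$, and symmetry yields equality. Since $Q$ is already abelian, by It\^{o}--Michler the normality $Q \lhd \cent G P$ is equivalent to every $\chi \in \irr{\cent G P}$ having $q'$-degree. The plan is to extract this local statement from the global hypothesis $\irrp p G = \irrp q G$ using block-theoretic tools, in particular the Alperin--Dade isomorphic blocks theorem (Theorem~\ref{isomblocks}), to transfer character data between $G$ and the centralizer $\cent G P$. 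Bridging the global character hypothesis and the local structure of $\cent G P$ is the core technical difficulty.
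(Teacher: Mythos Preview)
First, note that the paper does not actually prove this statement: its ``proof'' is a citation to \cite{NT} (a Navarro--Tiep preprint), with the remark that the $p$- and $q$-solvable case follows from \cite{NWo}. So there is no argument in the paper to compare yours against; the result is used as a black box.

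Your backward implication is genuinely wrong, for two independent reasons.

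\emph{(i) The ``stronger statement'' is false.} Take $G=\PSL_2(31)$, $p=3$, $q=5$. The split torus $T\cong C_{15}$ contains a Sylow $3$-subgroup $P$ and a Sylow $5$-subgroup $Q$; thus $[P,Q]=1$ and $PQ=T$ is abelian. Since $P$ and $Q$ are characteristic in $T=\cent GP=\cent GQ$, one has $\norm GP=\norm GT=\norm GQ$, a dihedral group of order $30$. So all hypotheses of the backward direction hold. Yet $G$ has irreducible characters of degree $30$ (and two of degree $15$), divisible by both $3$ and $5$. Hence not every $\chi\in\irr G$ has degree coprime to $pq$. (The theorem itself is fine here: the irreducible degrees lie in $\{1,15,30,31,32\}$, each of which is either coprime to both $3$ and $5$ or divisible by both, so indeed $\irrp 3G=\irrp 5G$.) In particular, proving your stronger statement would force $P\lhd G$ via It\^o--Michler, which visibly fails in this example.

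\emph{(ii) The Clifford-theoretic argument is invalid even on its own terms.} Clifford correspondence requires $N\lhd G$; your $N=\norm GP$ is typically not normal (it is not in the example above). The decomposition $\chi=\xi^G$ with $\xi\in\irr{T\mid\psi}$ and the divisibility $\xi(1)/\psi(1)\mid[T:N]$ simply do not follow without normality.

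For the forward implication, Steps 1 and 2 are reasonable in outline, though they already invoke very heavy machinery (\cite{LWWZ} and the Kessar--Malle direction of Brauer's height zero conjecture). Step 3 is, as you acknowledge, only a sketch. The tool you propose, Theorem~\ref{isomblocks}, applies to a \emph{normal} subgroup of $p'$-index satisfying $G=N\cent GP$; it is not clear how this transfers the global hypothesis $\irrp pG=\irrp qG$ to information about $\irr{\cent GP}$, since $\cent GP$ is not normal in $G$ and the index need not be $p'$. So the bridge between the global hypothesis and the desired local conclusion $Q\lhd\cent GP$ remains the essential missing idea.
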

\medskip

\begin{proof}
This is the content of \cite{NT}. For $p$-solvable and $q$-solvable groups, this follows from the main result of \cite{NWo}. See also \cite{NRS2}. 
\end{proof}

\medskip
The following is Conjecture B for $p$- and $q$-solvable finite groups.

\begin{thm}
 Let $p$
and $q$ be different primes. Assume that $G$ is $p$-solvable
and $q$-solvable.
Suppose that $$\irrp{p}{B_p(G)}=\irrp{q}{B_q(G)} \, .$$ Then $p$ and $q$
do  not divide $|G|$.
\end{thm}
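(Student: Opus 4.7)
The plan is to reduce the principal-block hypothesis to the $p',q'$-degree hypothesis of Theorem~\ref{nt} on a suitable quotient of $G$, and then to eliminate the residual normal subgroups by an inductive argument. The key first ingredient is the classical theorem of Fong for $p$-solvable groups: $\irr{B_p(G)}=\irr{G/O_{p'}(G)}$, and symmetrically for $q$. Setting $N:=O_{p'}(G)$ and $M:=O_{q'}(G)$, the hypothesis rewrites as $\irrp{p}{G/N}=\irrp{q}{G/M}$ as sets of characters of $G$. Each character in this common set has both $N$ and $M$ in its kernel, so factors through $G/NM$; hence both sides further coincide with $\irrp{p}{G/NM}$ and $\irrp{q}{G/NM}$ respectively, yielding $\irrp{p}{\bar G}=\irrp{q}{\bar G}$ for the $p,q$-solvable quotient $\bar G:=G/NM$. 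Theorem~\ref{nt} --- in its $p,q$-solvable form proved in \cite{NWo} --- then produces commuting abelian Sylow $p$- and $q$-subgroups $\bar P$ and $\bar Q$ of $\bar G$ with $\norm{\bar G}{\bar P}=\norm{\bar G}{\bar Q}$.

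Next, I apply Lemma~\ref{ber} to $G/N$ with the normal subgroup $NM/N\cong M/(M\cap N)$: this subgroup must have a normal $p$-complement, which, being characteristic in a normal subgroup of $G/N$, is a normal $p'$-subgroup of $G/N$. Since $O_{p'}(G/N)=1$, that complement is trivial, so $M/(M\cap N)$ is a $p$-group. Combined with $M$ being a $q'$-group and $M\cap N\leq N$ being a $p'$-group, this makes $M\cap N$ a $\{p,q\}'$-group. Symmetrically, $N/(M\cap N)$ is a $q$-group. As $M\cap N$ lies in the common kernel of $B_p(G)$ and $B_q(G)$, the principal-block hypothesis descends to $G/(M\cap N)$, and I may assume $M\cap N=1$; then $M$ is a normal $p$-group, $N$ is a normal $q$-group, and $NM=N\times M$.

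I would conclude by induction on $|G|$. In the base case $M=N=1$, the chain $O_p(G)\leq O_{q'}(G)=1$ combined with $p$-solvability forces $p\nmid|G|$, and symmetrically $q\nmid|G|$. The main obstacle is the inductive step of excluding $M\ne 1$ (and symmetrically $N\ne 1$). I would attempt this via Clifford theory, taking a minimal $G$-normal subgroup $V\leq M$ (necessarily elementary abelian of exponent $p$) together with a nontrivial $\lambda\in\irr{V}$, and aiming to produce some $\chi\in\irrp{p}{B_p(G)}$ lying above $\lambda$. Since $\irrp{p}{B_p(G)}=\irrp{q}{B_q(G)}\subseteq\irr{G/M}$, such a $\chi$ is forced to be trivial on $M\supseteq V$, contradicting that it lies above the nontrivial $\lambda$. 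The delicate step is finding such a $\chi$ of $p'$-degree; this is where the commuting-Sylow structure of $\bar G$ from Theorem~\ref{nt}, together with the $p$-solvable structure of $G/N$ (whose generalized Fitting subgroup is $O_p(G/N)\supseteq V$), is expected to be used.
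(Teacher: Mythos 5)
Your opening reductions are correct and essentially coincide with the paper's: Fong's theorem $\irr{B_p(G)}=\irr{G/\oh{p'}G}$, the observation that the common set lives in $\irr{G/K}$ with $K=\oh{p'}G\oh{q'}G$ so that $\irrp{p}{G/K}=\irrp{q}{G/K}$, the application of Lemma \ref{ber} to conclude that $\oh{q'}G$ is a $p$-group (hence equals $\oh pG$) and symmetrically $\oh{p'}G=\oh qG$, and the reduction to $\oh{p'}G\cap\oh{q'}G=1$. The problem is that everything after this point --- which is the actual content of the theorem --- is only announced, not proved. Your plan is to take a minimal normal $V\le M=\oh pG$, a nontrivial $\lambda\in\irr V$, and to ``produce some $\chi\in\irrp{p}{B_p(G)}$ lying above $\lambda$''; you yourself flag that finding such a $\chi$ of $p'$-degree is the delicate step and only say where you expect the tools to come from. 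That existence statement is not a routine Clifford-theory fact: in general a group with $\oh{p'}G=1$ need not have any $p'$-degree character nontrivial over a given central chief factor inside $\oh pG$ (already for $G$ extraspecial of order $p^3$ every character over a nontrivial character of $Z(G)$ has degree $p$), so the step can only succeed by exploiting the full hypothesis $\irrp{p}{B_p(G)}=\irrp{q}{B_q(G)}$ in a way you have not indicated. Likewise, your ``induction on $|G|$'' is not set up: you never verify that the hypothesis descends to whatever proper subgroup or quotient the inductive step would be applied to, whereas the paper only inducts on normal subgroups $Y\supseteq K$ with $G/Y$ a $\{p,q\}'$-group, precisely because there Clifford theory lets one transport the hypothesis down to $Y$.

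For comparison, the paper's route through this final stage is genuinely different from what you sketch. After applying Theorem \ref{nt} (via \cite{NWo}) to $G/K$ and the Hall--Higman Lemma 1.2.3 to get that $G/R$ is an abelian $\{p,q\}$-group for $R/K=\oh{\{p,q\}'}{G/K}$, it writes $G/R=P_0/R\times Q_0/R$ and shows that every $\gamma\in\irrp{p}{P_0/L}$ (with $L=\oh{p'}G$) is the restriction of a character of $G$ of $p'$- and $q'$-degree, hence is invariant under the coprime action of a Sylow $q$-subgroup $Q$. The decisive ingredient is then \cite[Corollary B]{MN11}: a coprime action fixing all $p'$-degree irreducible characters centralizes a Sylow $p$-subgroup, which combined with Hall--Higman forces $Q\le L$ and, symmetrically, $P\le M$; the earlier inductive step for $\{p,q\}'$-quotients then yields $G=K=L\times M$, and only in this degenerate nilpotent situation does the easy contradiction with a linear character nontrivial on $L$ appear (this is the correct analogue of your proposed contradiction, but it is available only after the structure has been pinned down). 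So the gap in your proposal is exactly the missing mechanism that replaces your hoped-for character over $\lambda$: without something like the $[MN11]$ coprime-action theorem and the Hall--Higman argument, the case $M\neq 1$ is not excluded.
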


\medskip

\begin{proof} Using Lemma \ref{th}, it is enough to show that $pq$ does not divide $|G|$.
Let $L= \oh{p'} G$, $M=\oh{q'} G$
and  $K=\oh{p'}G\oh{q'}G$. By \cite[Theorem 10.20]{nbook}, we know that 
$\irr{{B_p}(G)}=\irr{G/L}$ and $\irr{B_q(G)}=\irr{G/M}$.

Let $X=M \cap L$.  Since $\oh{p'}{G/X}=L/X$, we can assume that $X=1$.

By hypothesis, we have that $\irrp{p}{G/L} \sbs \irr{G/M}$.
Therefore ${\rm Irr}_{p'}(G/L)={\rm Irr}_{p'}(G/K)$.
By Lemma \ref{ber}, this implies that $K/L$ (and therefore $M$) has a normal $p$-complement. Thus $M=\oh p G$.   By the same argument, $L=\oh qG$.  

Suppose that $K \sbs Y \nor G$ and $G/Y$ is $p'$ and $q'$.  Then $\oh{p'}Y=L$
and $\oh{q'}Y=M$. Let $\tau \in \irrp{p}{Y/L}$, and let $\chi \in \irr G$ be over $\tau$. Then $\chi$ has $p'$-degree so it lies in $\irrp{q}{G/M}$. Thus $\tau \in \irrp{q}{Y/M}$. By the same
argument with the primes reversed and using induction, 
we conclude that $Y$ is not divisible by $pq$, and we are done in this case.

As it is obvious from the hypothesis, notice 
that $\irrp{p}{G/K}=\irrp{q}{G/K}$.   Let $\pi=\{p,q \}$.  By Theorem \ref{nt}, we have that $G/K$ has
abelian Hall $\pi$-subgroups. By the Hall-Higman Lemma 1.2.3 (see \cite[Theorem 3.21]{Is2}), and using the previous paragraph,
if $R/K= \oh{\pi'}{G/K}$, we have that $G/R$ is an abelian $\pi$-group.

Write $G/R=P_0/R \times Q_0/R$, where $P_0/R$ is an abelian $p$-group and $Q_0/R$ is
an abelian $q$-group.  Now, let $\gamma \in {\rm Irr}_{p'}(P_0/L)$. Let $\psi \in \irr G$ be over $\gamma$. Since $G/P_0$ is a $q$-group,
it follows that $\psi$ has $p'$-degree.
 We conclude that  $\psi$ has $q'$-degree, by using the hypothesis. Therefore, 
$\gamma=\psi_{P_0}$ is $Q_0$-invariant.  Now, notice that
$P_0/L$ is a $q'$-group.
Let $Q \in \syl qG$. 
Then $Q/L$ acts coprimely on $P_0/L$ fixing all the $p'$-degree irreducible
characters. Let $P_1/L\in{\rm Syl}_{p}(P_0/L)$ be $Q$-invariant (see \cite[Theorem 3.23]{Is2}). By \cite[Corollary B]{MN11} we have that   $[P_1/L,Q/L]=1$.
In particular,  $Q/L \sbs \cent{G/L}{U/L}$, where $U/L=\oh p{G/L}$.
Again, by the Hall-Higman Lemma 1.2.3, 
we conclude that $Q=L$ and $Q_0=R$. By the same reason $P_0=R$,
and we deduce that $G=R$. By the fourth paragraph,
we have that $K=G$.  If $L>1$, let $1 \ne \lambda \in \irr{L}$ linear
and consider $\chi=\lambda \times 1_M$. Then $\chi \in \irrp{q}{G/M}$
but does not contain $L$ in its kernel. Thus $L=1$ and by the same
reason $M=1$.  
\end{proof}

\medskip

It seems difficult to
characterize when 
  $\irrp{p}{B_p(G)} \sbs \irrp{q}{B_q(G)}$ group theoretically, as shown by
   $G=A6.2_3$, $p=2$ and $q=5$. Certainly, this is an interesting problem.
   
   \medskip
   The following is the $p$-$q$-solvable case of Conjecture C.
   \medskip
   
   \begin{thm}
   Suppose that $G$ is a finite $p$-solvable and $q$-solvable group,
   for different primes $p$ and $q$. Then $q$ does not divide $\chi(1)$ for all
   $\chi \in {\rm Irr}_{p'}(B_p(G))$ and $p$ does not divide $\psi(1)$
   for all  
   $\psi \in {\rm Irr}_{q'}(B_q(G))$ if and only if there are $P \in \syl pG$ and $Q \in \syl qG$
   such that $[P,Q]=1$.
    \end{thm}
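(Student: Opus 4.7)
The ``if'' direction is immediate from \cite[Theorem 4.1]{MN20}: commuting Sylow $p$- and $q$-subgroups force every character in $B_p(G)$ to have $q'$-degree, and symmetrically for $B_q(G)$, which is strictly stronger than the hypothesis.

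For the converse, write $L=\oh{p'}{G}$, $M=\oh{q'}{G}$, and $K=LM$. Since $G$ is both $p$- and $q$-solvable, \cite[Theorem 10.20]{nbook} gives $\Irr(B_p(G))=\Irr(G/L)$ and $\Irr(B_q(G))=\Irr(G/M)$, so the hypothesis translates into
\[\Irr_{p'}(G/L)\subseteq\Irr_{q'}(G)\qquad\text{and}\qquad\Irr_{q'}(G/M)\subseteq\Irr_{p'}(G).\]
The intersection $X:=L\cap M$ is a $\pi'$-group (with $\pi=\{p,q\}$), and both the hypothesis and the desired conclusion behave well under the quotient $G\to G/X$ (the latter via Schur--Zassenhaus applied to $X$ inside the preimage of a commuting pair of Sylows), so we may assume $X=1$; then $K=L\times M$ with $L$ a $p'$-group and $M$ a $q'$-group.

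The crucial observation is now to pass to the further quotient $G/K$. Since $\Irr(G/K)=\Irr(G/L)\cap\Irr(G/M)$, intersecting both of the above containments with $\Irr(G/K)$ yields $\Irr_{p'}(G/K)\subseteq\Irr_{q'}(G/K)$ and symmetrically, hence
\[\Irr_{p'}(G/K)=\Irr_{q'}(G/K).\]
As $G/K$ remains $p$- and $q$-solvable, Theorem \ref{nt} (in its solvable-group form due to Navarro--Wolf) produces commuting Sylow $p$- and $q$-subgroups $\bar P,\bar Q$ of $G/K$ with $\bar P\bar Q$ abelian and $\norm{G/K}{\bar P}=\norm{G/K}{\bar Q}$. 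It then remains to lift this structural conclusion back to $G$: a coprime-action argument in the spirit of Lemma \ref{expla}, combined with the Hall--Higman Lemma 1.2.3 applied to the normal subgroups $L$ and $M$, produces Sylow $p$- and $q$-subgroups of $G$ whose images modulo $K$ realize $\bar P,\bar Q$ and which themselves commute.

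The main obstacle lies in this final lifting step. In the analogous proof for Conjecture B given above, Berkovich's Lemma \ref{ber} forces $L$ to be a $q$-group and $M$ to be a $p$-group, which greatly simplifies the lift; here we have only a one-sided degree containment rather than an equality of character sets, so $L$ may carry a genuine $q$-part and $M$ a genuine $p$-part, and the coprime-action lifting must be performed one prime at a time—first centralizing the $q$-part of $L$ by a suitable Sylow $q$-subgroup of $G$, and symmetrically for the $p$-part of $M$—to ensure that the Sylow subgroups constructed in $G$, and not merely their images in $G/K$, genuinely commute.
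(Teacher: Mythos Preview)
Your setup through the reduction to $X=L\cap M=1$ is fine, and the ``if'' direction via \cite[Theorem~4.1]{MN20} is valid (the paper instead gives a two-line Hall--Higman argument: if $[P,Q]=1$ then $QL/L$ centralizes $\oh{p}{G/L}$, so $Q\sbs L$ by Hall--Higman~1.2.3, whence $q\nmid\chi(1)$ for all $\chi\in\irr{G/L}$).

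The gap is exactly where you flag it: the lifting from $G/K$ back to $G$. You pass all the way to $G/K$, apply Theorem~\ref{nt} to the \emph{equality} $\irrp{p}{G/K}=\irrp{q}{G/K}$, and then propose to lift commuting Sylows through $K=L\times M$. But $K$ has both a $p$-part (inside $M$) and a $q$-part (inside $L$), and nothing in Lemma~\ref{expla} or a generic coprime-action argument forces a Sylow $q$-subgroup of $G$ to commute with a Sylow $p$-subgroup of $M$ without re-invoking the original hypothesis on $G/L$. Your last paragraph essentially concedes this; as written, the argument is an outline, not a proof.

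The paper avoids the lifting problem entirely by never passing to $G/K$. It applies the main result of \cite{NWo} directly to $G/L$, using only the one-sided containment $\irrp{p}{G/L}\sbs\irrp{q}{G/L}$, to obtain $P\in\syl pG$ and $Q\in\syl qG$ with $\norm GP\sbs \norm GQ\,L$. Then $\oh{p}{G/L}$ and $QL/L$ normalize each other and have coprime orders, hence centralize each other; Hall--Higman~1.2.3 forces $QL/L\sbs\oh{p}{G/L}$, so $Q\sbs L$. Symmetrically $P\sbs M$, and now $[P,Q]\sbs[M,L]\sbs M\cap L=1$ finishes. The point is that the Navarro--Wolf theorem already handles the one-sided hypothesis on $G/L$; going to $G/K$ to get a two-sided equality throws away precisely the information you then need to lift.
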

    
    \begin{proof} $L=\oh{p'}G$ and $M=\oh{q'}G$ and notice that ${\rm Irr}(B_p(G))={\rm Irr}(G/L)$ and ${\rm Irr}(B_q(G))={\rm Irr}(G/M)$ by \cite[Theorem 10.20]{nbook}.
    Suppose that $[P,Q]=1$. Then $Q$ centralizes $K/L$, where  $K/L=\oh p{G/L}$. By the Hall--Higman Lemma 1.2.3, we have that $QL/L$ is contained in $K/L$.
    Hence, $Q \sbs L$ and therefore $q$ does not divide the irreducible character degrees
    of $G/L$. Reasoning analogously we obtain that $p$ does not divide the irreducible character degrees
    of $G/M$.
    
    Suppose now that $q$ does not divide $\chi(1)$ for all
   $\chi \in {\rm Irr}_{p'}(B_p(G))$ and $p$ does not divide $\psi(1)$
   for all  
   $\psi \in {\rm Irr}_{q'}(B_q(G))$. We prove this implication by induction on $|G|$. If $X=L\cap M>1$, by induction $G/X$ has nilpotent Hall $\{p,q\}$-subgroups and using the Schur--Zassenhaus theorem we conclude that $G$ has nilpotent Hall $\{p,q\}$-subgroups. Thus we may assume that $X=1$.
   
   We have that ${\rm Irr}_{p'}(G/L) \sbs {\rm Irr}_{q'}(G/L)$. By the main result of
   \cite{NWo}, there is a Sylow $p$-subgroup $P$ of $G$ and a Sylow $q$-subgroup
   $Q$ of $G$, such that $\norm GP \sbs \norm GQ L$. 
   If $K/L=\oh p{G/L}$, then we have that $K/L$ normalizes $QL/L$.
   Since $QL/L$ normalizes $K/L$ and have coprime orders, we deduce that $QL/L$
   centralizes $K/L$. By the Hall--Higman Lemma 1.2.3, we deduce that $Q\sbs L$.
   By the same reason, we have that $P \sbs M$. We conclude that $[P,Q]\subseteq[M,L]=1$ as wanted. 
      \end{proof}
   
   \medskip
   We have mentioned in the Introduction that our conjectures seem to admit even a further generalization
   using Galois automorphisms. Indeed, if $\sigma_p$ is the automorphism of $\Q^{\rm ab}$ that
  fixes $p'$-roots of unity and sends $p$-power roots of unity $\zeta$ to $\zeta^{1+p}$,
  it should be possible to replace ${\rm Irr}_{p'}(B_p(G))$ by the subset of its
  fixed elements under $\sigma_p$. We note that our work in Section \ref{sectionsimples} supports this generalization in the case of Theorem A.  Namely, the characters in $\Irr_{p'}(B_p(A))\cap\Irr_{q'}(B_q(A))$ constructed in Lemma \ref{definingchar} and Propositions \ref{prop:nondef} and \ref{prop:alts} are also fixed by $\sigma_p$ and $\sigma_q$.  Indeed, this is clear for the rational characters in Propositions \ref{prop:nondef} and \ref{prop:alts}, but the characters described in Lemma \ref{definingchar} for $S$ can also be chosen to be fixed by $\sigma_p$ and $\sigma_q$ (see  \cite[Proposition 4.5 and its proof]{GHSV}), so that this is also true for the characters of $A$ by again applying Theorem \ref{isomblocks}.

\end{document}